\newcommand{\supp}{{\rm supp}}
\newcommand\be{\begin{equation}}
\newcommand\ee{\end{equation}}
\newcommand\bea{\begin{eqnarray}}
\newcommand\eea{\end{eqnarray}}
\newcommand\bi{\begin{itemize}}
\newcommand\ei{\end{itemize}}
\newcommand\ben{\begin{enumerate}}
\newcommand\een{\end{enumerate}}
\newcommand\bc{\begin{center}}
\newcommand\ec{\end{center}}
\newcommand\ba{\begin{array}}
\newcommand\ea{\end{array}}
\def\notdiv{\ \mathbin{\mkern-8mu|\!\!\!\smallsetminus}}
\newcommand{\R}{\ensuremath{\mathbb{R}}}
\newcommand{\Z}{\ensuremath{\mathbb{Z}}}
\newtheorem{thm}{Theorem}[section]
\newtheorem{lem}[thm]{Lemma}
\newtheorem{prop}[thm]{Proposition}
\newtheorem{rek}[thm]{Remark}
\newcommand{\twocase}[5]{#1 \begin{cases} #2 & \text{{\rm #3}}\\ #4
&\text{{\rm #5}} \end{cases}   }
\newcommand{\mattwo}[4]
{\left(\begin{array}{cc}
                        #1  & #2   \\
                        #3 &  #4
                          \end{array}\right) }
\newcommand{\intii}{\int_{-\infty}^\infty}
\numberwithin{equation}{section}
\begin{document}

\title[Low-lying Zeros of Cuspidal Maass Forms]{Low-lying Zeros of Cuspidal Maass Forms}

\author{Nadine Amersi}\email{n.amersi@ucl.ac.uk}
\address{Department of Mathematics, University College London, London, WC1E 6BT}

\author{Geoffrey Iyer}\email{geoff.iyer@gmail.com}
\address{Department of Mathematics, University of Michigan, Ann Arbor, MI 48109}

\author{Oleg Lazarev}\email{olazarev@Princeton.edu}
\address{Department of Mathematics, Princeton University, Princeton, NJ 08544}

\author{Steven J. Miller}\email{sjm1@williams.edu, Steven.Miller.MC.96@aya.yale.edu}
\address{Department of Mathematics and Statistics, Williams College, Williamstown, MA 01267}

\author{Liyang Zhang}\email{lz1@williams.edu}
\address{Department of Mathematics and Statistics, Williams College, Williamstown, MA 01267}

\subjclass[2000]{11M26 (primary), 11M41, 15A52 (secondary).}

\keywords{$1$-level density, Maass forms, low-lying zeros, Kuznetsov trace formula}

\date{\today}

\thanks{We thank Gergely Harcos, Andrew Knightly, Stephen D. Miller and Peter Sarnak for helpful conversations on an earlier version.  This work was done at the 2011 SMALL Undergraduate Research Project at Williams College, funded by NSF GRANT DMS0850577 and Williams College; it is a pleasure to thank them for their support. The first named author was also supported by the Mathematics Department of University College London, and the fourth named author was partially supported by NSF grant DMS0970067.}

\begin{abstract}  The Katz-Sarnak Density Conjecture states that the behavior of zeros of a family of $L$-functions near the central point (as the conductors tend to zero) agree with the behavior of eigenvalues near 1 of a classical compact group (as the matrix size tends to infinity). Using the Petersson formula, Iwaniec, Luo and Sarnak \cite{ILS} proved that the behavior of zeros near the central point of holomorphic cusp forms agree with the behavior of eigenvalues of orthogonal matrices for suitably restricted test functions. We prove a similar result for level 1 cuspidal Maass forms, the other natural family of ${\rm GL}_2$ $L$-functions. We use the explicit formula to relate sums of our test function at scaled zeros to sums of the Fourier transform at the primes weighted by the $L$-function coefficients, and then use the Kuznetsov trace formula to average the Fourier coefficients over the family. There are numerous technical obstructions in handling the terms in the trace formula, which are surmounted through the use of smooth weight functions for the Maass eigenvalues and results on Kloosterman sums and Bessel and hyperbolic functions.

\end{abstract}


\maketitle
\setcounter{equation}{0}







\tableofcontents


\section{Introduction}

The distribution of zeros of $L$-functions play an important role in numerous problems in number theory, from the distribution of primes \cite{Con,Da} to the size of the class group \cite{CI,Go,GZ}. In the 1970s, Montgomery \cite{Mon} observed that the pair correlation of the zeros of $\zeta(s)$, for suitable test functions, agree with that of the eigenvalues of the Gaussian Unitary Ensemble (GUE). This suggested a powerful connection between number theory and random matrix theory (see \cite{FM,Ha} for some of the history), which was further supported by Odlyzko's investigations \cite{Od1,Od2} showing agreement between the spacings of zeros of $\zeta(s)$ and the eigenvalues of the GUE. Later studies by Katz and Sarnak \cite{KaSa1,KaSa2} showed more care is needed. Specifically, although the $n$-level correlations agree for suitable test functions \cite{Hej,RS} and are the same for all classical compact groups, the behavior of the eigenvalues near 1 is different for unitary, symplectic and orthogonal matrices. This led to the Katz-Sarnak Density Conjecture, which states that the behavior of zeros near the central point in a family of $L$-functions (as the conductors tend to infinity) agree with the behavior of eigenvalues near 1 of a classical compact group (as the matrix size tends to infinity). For suitable test functions, this has been verified in many families, including Dirichlet characters, elliptic curves, cuspidal newforms, symmetric powers of ${\rm GL}(2)$ $L$-functions, and certain families of ${\rm GL}(4)$ and ${\rm GL}(6)$ $L$-functions; see for example \cite{DM1,DM2,FI,Gao,Gu,HM,HR,ILS,KaSa2,Mil1,MilPe,OS,RR,Ro,Rub,Ya,Yo2}. This correspondence between zeros and eigenvalues allows us, at least conjecturally, to assign a definite symmetry type to each family of $L$-functions (see \cite{DM2} for more on identifying the symmetry type of a family).

For this work, the most important families studied to date are holomorphic cusp forms. Using the Petersson formula (and a delicate analysis of the resulting Bessel-Kloosterman term), Iwaniec, Luo and Sarnak \cite{ILS} proved that the limiting behavior of the zeros near the central point of holomorphic cusp forms agree with the eigenvalues of orthogonal matrices for suitably restricted test functions. In this paper we look at the other ${\rm GL}_2$ family of $L$-functions, namely Maass forms. Specifically, we study the family of level 1 cuspidal Maass Forms. We quickly recall their properties; see \cite{Iw2,IK,Liu,LY2} for details. A Maass form on the group ${\rm SL}_2(\Z)$ is a smooth function $u \neq 0$ on the upper half-plane $\mathbb{H}$  satisfying:

\begin{enumerate}
\item For all $g \in {\rm SL}_2(\Z)$ and all $z \in \mathbb{H}$, $u(gz)=u(z)$, with ${\rm SL}_2(\Z)$ acting on $\mathbb{H}$ by $gz= \frac{az+b}{cz+d}$ for $g = \tiny\mattwo{a}{b}{c}{d}\normalsize$;
\item $u$ is an eigenfunction of the non-Euclidean Laplacian $\Delta = -y^2 \left(\frac{\partial^2}{\partial x^2} + \frac{\partial^2}{\partial y^2} \right)$, with $\Delta u = \lambda u$, where the eigenvalue $\lambda = s(1-s)$;
\item there exists a positive integer $k$ such that $u(z)\ll y^k$ as $y \rightarrow +\infty$.
\end{enumerate}

If $u$ is a cuspidal Maass form, then for all $z \in \mathbb{H}$ we have \be \int_0^1 u\left(\mattwo{1}{b}{0}{1}z\right)db \ = \ 0. \ee The Selberg eigenvalue conjecture states that the eigenvalues of the Laplacian on a congruence group are at least $1/4$. Though open in general, it has been proved for ${\rm SL}_2(\Z)$ (see for instance \cite{DI}). This allows us to write $\lambda = s(1-s)$ as $\lambda_j = (\frac12+it_j)(\frac12-it_j)$ with $t_j \in \R$. By Weyl's Law, the number of $\lambda_j = (\frac12 + it_j)(\frac12-it_j)$ at most $x$ is $x/12 + O(x^{1/2}\log x)$. In particular, this means that the average spacing between eigenvalues $t_j$ around $T$ is on the order of $1/T$.

If $u_j$ is a cuspidal Maass form on ${\rm SL}_2(\Z)$, it has a Fourier expansion
\be
u_j(z)\ =\ \cosh(t_j) \sum_{n\ne 0} \sqrt{y}\lambda_j(n) K_{it_j}(2\pi |n|y)e^{2\pi i n x},
 \ee
and its norm \be ||u_j||^2 \ = \ \int_{{\rm SL}_2(\Z)\setminus\mathbb{H}} |u_j|^2 \frac{dxdy}{y^2} \ee satisfies (see \cite{HL,Iw1}) \be\label{eq:boundsnormuj} |\lambda_j|^{-\epsilon} \ \ll \ ||u_j||^2 \ \ll \ |\lambda_j|^{\epsilon} \ee for any $\epsilon>0$. There are many ways to normalize $u_j$. We choose to normalize $u_j$ by having $\lambda_j(1)=1$; while this is useful as it leads to the Fourier coefficients being multiplicative, it causes some problems with the normalizations needed to apply the Kuznetsov trace formula to average over these coefficients, and influences our choice of weight function below.

We define the $L$-function attached to $u_j$ by
\begin{equation}
L(s,u_j)\ =\ \sum_{n\ge 1} \frac{\lambda_j(n)}{n^s},
\end{equation}
with Euler product
 \begin{eqnarray}
L(s,u_j)
&\ = \ & \prod_p\left({1-\frac{\lambda_j(p)}{p^s}+\frac{1}{p^{2s}}}\right)^{-1}\nonumber\\
&=& \prod_p\left(1-\frac{\alpha_j(p)}{p^s}\right)^{-1}\left(1-\frac{\beta_j(p)}{p^s}\right)^{-1}, \label{eqn: product}
\end{eqnarray}
where the Satake parameters $\alpha_j(p), \beta_j(p)$ satisfy
\begin{equation} \label{eqn: alphabeta}
\alpha_j(p)+\beta_j(p) \ =  \ \lambda_j(p), \ \ \ \ \alpha_j(p)\beta_j(p) \ = \ 1.
\end{equation}

We study the low-lying zeros of the L-functions associated to Maass forms. Our main statistic for studying these zeros is the 1-level density, which we now describe. Let $\phi$ be an even Schwartz function such that the Fourier transform $\widehat{\phi}$ of $\phi$ has compact support; that is,
\be
\widehat{\phi}(y)\ =\ \int_{-\infty}^\infty \phi(x)e^{-2\pi i xy}dx
\ee
and there is a $\sigma<\infty$ such that $\widehat{\phi}(y) = 0$ for $y$ outside $(-\sigma, \sigma)$.

The $1$-level density of $L(s, u_j)$ is
\begin{equation} \label{eqn: 1leveldef}
D_1(u_j; \phi) \ = \
\sum_{\gamma_j}
 \phi \left(\frac{\log R}{2\pi}\gamma_{j} \right),
\end{equation}
where $\rho_j = \frac{1}{2}+i\gamma_{j}$ are the zeros of $L(s, u_j)$, and $\log R$ is a rescaling parameter related to the average log-conductor in the weighted family (to be defined carefully below). As $\phi$ is a Schwartz function, most of the contribution comes from the zeros near the central point, i.e., the low-lying zeros. To complete the determination of the symmetry type, we'll also need to study the $2$-level density, which is defined similarly in \S\ref{sec:2level}.

Similar to how the harmonic weights facilitate applications of the Petersson formula to average the Fourier coefficients of cuspidal newforms (see for instance \cite{ILS,MilMo}), we introduce a nice, even weight function $h_T$ to smooth the sum over the Maass forms. The two most interesting cases to investigate is an expanding window centered at the origin, or two windows at $\pm T$ with widths significantly smaller than $T$ (the narrower the better). \\

\textbf{For definiteness, in this paper $h_T$ will always refer either to the weight function}
\begin{equation}\label{eq:defht1}
h_{1,T}(r) \ = \  \exp\left(-t^2/T^2\right)
\end{equation} \textbf{(which is proportional to a Gaussian centered at zero with standard deviation $T/\sqrt{2}$) or}
\begin{equation}\label{eq:defht2}
h_{2,T}(r) \ = \  \frac12 h\left(\frac{r-T}{L}\right) +  \frac12 h\left(\frac{r+T}{L}\right),
\end{equation}
\textbf{where $h$ is a non-negative even Schwartz function, integrates to 1, has $\supp(\widehat{h}) \subset (-1,1)$, and for some $\eta \in (0,1)$ we have $\frac{\pi}{2\log T} < (1-\eta)L$ and $L = o(T^{1/8})$. We denote the size constraints on $L$ by $\frac{\pi}{2\log T} < (1-\eta)L = o(T^{1/8})$.} \\

\begin{rek} While there is no need to normalize $h_T$, as we divide by the sum of the weights, we chose the two normalizations above to simplify the main terms in the hyperbolic tangent integral in the Kuznetsov formula (see Lemma \ref{lem:findingRprep}). In order to use the Kuznetsov trace formula, there must be a $\delta>0$ such that our weight function satisfies: (1) $h_T(t)=h_T(-t)$, (2) $h_T$ is holomorphic in $|{\rm Im}(t)| \le 1/2 + \delta$, and (3) $h(t) \ll (|t|+1)^{-2-\delta}$; both of our test functions satisfy these conditions. Note the second test function has stronger conditions. This is due to the fact that we are studying Maass forms concentrated around a growing height with a varying width, and the support restriction on $\widehat{h}$ forces the Kloosterman sum in the Kuznetsov formula to have finitely many terms and hence converge. Our analysis can be readily modified to study more general weight functions concentrated about 0. \end{rek}

\begin{rek} The upper bound for $L$ is harmless, as we want to take the smallest possible value of $L$ as we are trying to localize to eigenvalues near $\pm T$; thus what matters is the lower bound on $L$, not the upper (which with slightly more work we could increase). The effect of $L$ and $T$ is that only the eigenvalues within essentially $L$ units of $T$ contribute to the 1-level density. By Weyl's Law, the average spacing between eigenvalues around $T$ is on the order of $1/T$. Thus even for the smallest $L$ we take we are covering many times the average spacing, and thus are studying a large number of eigenvalues. This scale is similar to what has been found in other problems (see for instance \cite{Sar}). \end{rek}

We consider the weighted $1$-level density of the family of cuspidal Maass forms on ${\rm SL}_2(\Z)$:
\be
\lim_{T\to\infty} \mathcal{D}_1(\phi, h_T) \ = \ \lim_{T\to\infty} \frac{1}{\sum_{j} \frac{h_T(t_j)}{\|u_j\|^2}} \sum_{j} \frac{h_T(t_j)}{\|u_j\|^2}
D_1(u_j; \phi). \ee
That is, we care about averaging $D_1(u_j; \phi)$ over `families' of increasing eigenvalues.

We use this weighting to facilitate applying the Kuznetsov Trace Formula (see for example Theorem 16.3 of \cite{IK}), which says that for such normalized $u_j$, we have
\begin{eqnarray}\label{eq:kuznetsovtraceformula}
&&\sum_{j} \frac{h(t_j)}{\|u_j\|^2} \lambda_j(m)\overline{\lambda_j(n)}  +\frac{1}{4\pi} \int_{-\infty}^\infty \overline{\tau(m,r)} \tau(n,r) \frac{h(r)}{\cosh( \pi r)}  dr\nonumber\\
&& =\ \frac{\delta_{n,m}}{\pi^2} \int_{-\infty}^\infty r \tanh(r)h(r)dr + \frac{2i}{\pi} \sum_{c\ge 1} \frac{S(m,n;c)}{c}
\int_{-\infty}^\infty J_{2ir}\left( \frac{4\pi \sqrt{mn}}{c} \right) \frac{h(r)r}{\cosh(\pi r)} dr, \nonumber\\
\end{eqnarray}
where
\begin{eqnarray}
\tau(m,r ) & = & \pi^{\frac{1}{2}+ir}\Gamma(1/2+ir)^{-1}\zeta(1+2ir)^{-1}m^{-1/2}\sum_{ ab =|m| } \left( \frac{a}{b}\right)^{ir} \nonumber\\
S(m,n;c) & = & \displaystyle \sum_{x = 0 \atop  \gcd(x,c)=1}^{c-1}e^{2\pi i(nx+mx^\ast)/c} \nonumber\\
J_{2ir}(x)& = &\sum_{m=0}^\infty\frac{(-1)^m}{m!\Gamma(m+ir+1)}\left(\frac{x}{2}\right)^{2m+ir}
\end{eqnarray} (with $x^\ast$ the multiplicative inverse of $x$ modulo $c$). As we have chosen to normalize our $L$-functions so that $\lambda_j(1) = 1$, instead of normalizing so that the $L^2$-norm is one, our conventions differ from some authors. It is harmless to pass from one to the other, though, as all we need to do is replace $\lambda_j(k)$ with $\lambda_j(k)/||u_j||$; as we always have a product of two Fourier coefficients, we have $||u_j||^2$ in the denominator. See Chapter 15 of \cite{IK} for a proof.\footnote{There are many different normalizations (and frequently minor typos), and thus some care is needed in comparing formulas from different works. For example, there is a typo in the definition of $\phi(n,s)$ (it should be $\zeta(2s)^{-1}$) in Chapter 14 of \cite{IK}, which is then used in the Kuznetsov formula in Chapter 15 of \cite{IK}; the typo would lead to evaluating the reciprocal of the Riemann zeta function on the critical line and not the edge of the critical strip. While this factor is correctly stated in \cite{Liu}, there the author drops the factor of $r$ in the integral with $\tanh(r)$, and absent this factor the integral is trivially zero as the integrand is now odd.}

Our main result is the following.
\begin{thm} \label{thm:levelone}
Let $\phi$ be an even Schwartz function such that the Fourier transform $\widehat{\phi}$ of $\phi$ has compact support in $(-\sigma, \sigma)$, let $h_T$ be either $h_{1,T}$ (equation \eqref{eq:defht1}) or $h_{2,T}$ (equation \eqref{eq:defht2}), and let $\frac{\pi}{2\log T} < (1-\eta)L = o(T^{1/8})$. Then the weighted 1-level density of the family of level 1 Maass forms is
\begin{eqnarray}
& & \frac{1}{\sum_j \frac{h_T(t_j)}{\|u_j\|^2}} \sum_j \frac{h_T(t_j)}{\|u_j\|^2} D_1(u_j; \phi) \nonumber\\ & & \twocase{= \ \frac{\phi(0)}{2} + \widehat{\phi}(0)+ O\left(\frac{\log\log R}{\log R}\right) +}{O(T^{\sigma(3/2 + \epsilon) -1/4})}{if $h = h_{1,T}$}{O(T^{\sigma(3/2+\epsilon)-\eta})}{if $h=h_{2,T}$;} \ \ \
\end{eqnarray} this agrees with the scaling limit of orthogonal matrices for $\sigma < 1/6$ if $h=h_{1,T}$ and $\sigma < \frac23\eta$ for $h=h_{2,T}$. As we may take $\eta$ arbitrarily close to 1, we may take the support in the second case to be arbitrarily smaller than $2/3$. \end{thm}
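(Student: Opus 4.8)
The plan is to follow the standard architecture for computing $1$-level densities via the explicit formula, but with the Kuznetsov trace formula playing the role that the Petersson formula plays for holomorphic forms. First I would apply the explicit formula to each $L(s,u_j)$ to rewrite $D_1(u_j;\phi)$ as $\widehat\phi(0) + \tfrac12\phi(0)$ (the contribution of the pole/gamma factors, after appropriate normalization by $\log R$) minus a sum over primes of the form
\begin{equation}
\frac{2}{\log R}\sum_p \sum_{k\ge 1}\frac{\alpha_j(p)^k + \beta_j(p)^k}{p^{k/2}}\,\widehat\phi\!\left(\frac{k\log p}{\log R}\right)\log p,
\end{equation}
up to lower-order error from the archimedean factors. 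Since $\widehat\phi$ is supported in $(-\sigma,\sigma)$, only $k=1$ and $k=2$ survive, and the $k=2$ term contributes a secondary main term that, after averaging, matches the orthogonal symmetry signature; the $k\ge 3$ terms are negligible. The key point is then to average the $k=1$ term $\lambda_j(p) = \alpha_j(p)+\beta_j(p)$ over the family weighted by $h_T(t_j)/\|u_j\|^2$.

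Next I would insert this into the weighted average and apply the Kuznetsov formula \eqref{eq:kuznetsovtraceformula} with $m=p$, $n=1$ (so $\delta_{n,m}=0$ for $p$ prime), which converts the spectral sum $\sum_j \frac{h_T(t_j)}{\|u_j\|^2}\lambda_j(p)$ into (i) the Eisenstein contribution $-\frac{1}{4\pi}\int \overline{\tau(p,r)}\tau(1,r)\frac{h_T(r)}{\cosh\pi r}\,dr$ and (ii) the Bessel–Kloosterman term $\frac{2i}{\pi}\sum_{c\ge 1}\frac{S(p,1;c)}{c}\int J_{2ir}(4\pi\sqrt p/c)\frac{h_T(r)r}{\cosh\pi r}\,dr$. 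The denominator $\sum_j \frac{h_T(t_j)}{\|u_j\|^2}$ is handled by taking $m=n=1$ in Kuznetsov, where the diagonal term $\frac{1}{\pi^2}\int r\tanh(r)h_T(r)\,dr$ gives the main normalization (this is Lemma \ref{lem:findingRprep}, which also pins down $\log R$ as the average log-conductor $\sim 2\log T$). For $h_{1,T}$ this integral is $\asymp T^2$; for $h_{2,T}$ it is $\asymp LT$. The Eisenstein term in (i) involves $\zeta(1+2ir)^{-1}$ and a divisor sum over $ab=p$, i.e. $1 + (p)^{ir} \cdot p^{-1/2}$-type factors; bounding it using standard estimates for $1/\zeta$ on the $1$-line and the rapid decay of $h_T$ shows it is $O(\log p)$-ish, hence genuinely lower order after dividing by the normalization and summing against $\widehat\phi$.

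The main obstacle, as the abstract signals, is the Bessel–Kloosterman term. The strategy is: bound $|S(p,1;c)| \le d(c)\sqrt{\gcd(p,c)}\sqrt c$ by Weil; use the support condition on $\widehat h$ (for $h_{2,T}$) to truncate the $c$-sum to finitely many terms, and for $h_{1,T}$ rely on the decay in $c$; and for each $c$ bound the $r$-integral $\int_{-\infty}^\infty J_{2ir}(4\pi\sqrt p/c)\frac{h_T(r)r}{\cosh\pi r}\,dr$ using the series expansion of $J_{2ir}$ together with estimates for the ratio of Gamma functions and for hyperbolic functions. The delicate regime is small $c$ (so the Bessel argument $x = 4\pi\sqrt p/c$ is large) combined with $r \asymp T$: here one needs the uniform asymptotics/bounds for $J_{2ir}(x)$ as both $r$ and $x$ grow, and the $1/\cosh(\pi r)$ against the $h_T(r)$ profile must be exploited carefully. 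Shifting the $r$-contour into the strip $|\mathrm{Im}\,r| \le 1/2+\delta$ (permitted by the holomorphy hypothesis on $h_T$) and estimating there is the natural device; this is where the hypothesis $L = o(T^{1/8})$ and $p < R^\sigma \approx T^{2\sigma}$ enter, producing the error terms $O(T^{\sigma(3/2+\epsilon)-1/4})$ and $O(T^{\sigma(3/2+\epsilon)-\eta})$ respectively after dividing by the normalization ($T^2$ or $LT$) and summing $\sum_{p<T^{2\sigma}}\frac{\log p}{\sqrt p}$. Requiring these to be $o(1)$ forces $\sigma < 1/6$ and $\sigma < \tfrac23\eta$. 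Finally I would assemble the pieces: main term $\tfrac12\phi(0) + \widehat\phi(0)$ from the explicit formula plus the $k=2$ average, error $O(\log\log R/\log R)$ from the archimedean and $k\ge 3$ contributions, and the Kuznetsov error terms above; then compare with the orthogonal density kernel $\widehat\phi(0) + \tfrac12\phi(0)$ (valid for $\sigma<1$, so the restriction is entirely from the analysis, not the target).
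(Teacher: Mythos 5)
Your architecture matches the paper's essentially step for step: explicit formula, truncation of the prime sum at $k=2$ with the $u_j$-independent part of the $k=2$ term supplying $\phi(0)/2$, normalization of $R$ via the $m=n=1$ diagonal term $\frac{1}{\pi^2}\int r\tanh(r)h_T(r)\,dr$ (which is $T^2+O(1)$ resp.\ $LT+O(1)$), the Eisenstein term bounded via $|\zeta(1+2ir)|^{-1}\ll\log(2+|r|)$, and the Bessel--Kloosterman term controlled by Weil plus, for $h_{2,T}$, the observation that the $r$-integral is essentially $LT^{1/2}\widehat{h}\bigl(\frac{L}{\pi}\log\frac{cT}{\pi\sqrt{mn}}\bigr)$, so the support of $\widehat{h}$ truncates the $c$-sum to $c\le\pi\sqrt{mn}\,T^{-1}e^{\pi/L}$. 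The resulting error terms and support thresholds you state are exactly those in Lemmas \ref{lem: kuznetsovapproximation} and \ref{lem:twosumsfor1leveldensity}.

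The one step in your plan that would fail as written is the Bessel--Kloosterman bound for $h_{1,T}$. You propose to ``rely on the decay in $c$'' and shift the $r$-contour, exploiting $h_{1,T}(r)/\cosh(\pi r)$. But $h_{1,T}(r)=e^{-r^2/T^2}$ is essentially $1$ on the whole range $|r|\ll T$, so there is no localization in $r$ and hence no Fourier-transform-of-$h$ structure to produce decay in $c$ or savings in $T$; the paper notes explicitly that this route, and integral representations of $J_{2ir}$ and $\Gamma$, do not yield tractable bounds here. The paper instead imports the bound $C_{h_T}(m,n)=O\bigl((\log 3mn)^2T^{7/4}(mn)^{1/4}\bigr)$ from equation (16.56) of \cite{IK} as a black box; dividing by the normalization $T^2/\pi^2$ and summing $\sum_{p\le T^{2\sigma}}p^{-1/4+\epsilon}$ then gives your stated error $O(T^{\sigma(3/2+\epsilon)-1/4})$ and the threshold $\sigma<1/6$. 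So your conclusion for $h_{1,T}$ is correct, but you need to replace your proposed argument there with this (or an equivalent) a priori bound on the Bessel--Kloosterman term for slowly decaying spectral weights.
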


\begin{rek} We are able to obtain significantly better support for the second case as the eigenvalues are concentrated in small bands about $\pm T$. This assists us in bounding the contribution from the Bessel-Kloosterman terms from the Kuznetsov formula, as this forces the arguments we evaluate at to be quite large. The situation is markedly different in the first case. 
\end{rek}

Unfortunately the 1-level densities of the three orthogonal groups are indistinguishable from each other if the support of $\widehat{\phi}$ is less than 1, though they are distinguishable from the unitary and symplectic groups. While we expect the symmetry type of our family to be governed by the distribution of the signs of the functional equation, Theorem \ref{thm:levelone} is insufficient to prove this and determine which orthogonal group governs the symmetry.

Our support restriction for $\widehat{\phi}$ are due to bounds on non-contributing terms from the Kuznetsov Trace Formula, which is significantly less tractable than the Petersson formula (which is used to average over the Fourier coefficients for holomorphic cuspidal forms). In particular, the small support comes from the Bessel-Kloosterman piece of the Kuznetsov formula.


To surmount these difficulties and determine which orthogonal group is associated to Maass form L-functions,
we turn to the $2$-level density. Miller \cite{Mil0,Mil1} showed that while the 1-level densities for the three orthogonal groups are the same if the support is less than 1, their 2-level densities are different for arbitrarily small support, with the difference related to the percentage of elements with odd functional equation. We therefore study the average $2$-level density, which is given by
\begin{eqnarray}
\mathcal{D}_2(\phi_1,\phi_2,h_T) &\ := \ & \frac{1}{\sum_{u_j \in \mathcal{F}} \frac{h_T(t_j)}{\| u_j \|^2}}
\sum_{u_j \in \mathcal{F}} \frac{h_T(t_j)}{\| u_j \|^2}
\sum_{j_1 \ne \pm j_2}
\phi_1\left(\frac{\log R}{2\pi} \gamma_{j_1}\right)
\overline{\phi_2\left(\frac{\log R}{2\pi} \gamma_{j_2}\right)}.\nonumber\\
\end{eqnarray}

We find
\begin{thm}\label{thm: leveltwo}
Let $h_T$ be as in Theorem \ref{thm:levelone}, $\frac{\pi}{2\log T} < (1-\eta)L = o(T^{1/8})$, $\phi_1, \phi_2$ be even Schwartz functions such that their Fourier transforms have compact support in $(-\sigma, \sigma)$, and $N(-1, \mathcal{F})$ denote the weighted percentage of Maass forms with odd sign in the functional equation, \be
\mathcal{N}(-1)\ =\ \frac{1}{\sum_{j} \frac{h_T(t_j)}{\| u_j \|^2}}
 \sum_{j; \epsilon_j = -1} \frac{h_T(t_j)}{\| u_j \|^2}.
\ee
Then with $\sigma<\frac{1}{3} \eta$ (for $h_{2.T}$) or with $\sigma<1/12$ (for $h_{1,T}$), the 2-level density is
\begin{eqnarray}
\lim_{T\rightarrow \infty}\mathcal{D}_2(\phi_1,\phi_2,h_T)&\ =\ & \prod_{i=1}^2  \left[
\frac{\phi_i(0)}{2}+
\widehat{\phi_i}(0)\right] + \frac{1}{2}\int_{-\infty }^\infty  |z|\widehat{\phi}_1(z)\widehat{\phi}_2(z)dz\nonumber\\
 & & \ \ -\ 2 \left(\frac{\phi_1(0)\phi_1(0)}{2}+ \widehat{\phi_1\phi_2} (0) - (\phi_1 \phi_2)(0) N(-1, \mathcal{F})\right),\nonumber\\
\end{eqnarray} and agrees only with the family of orthogonal matrices where the weighted distribution of sign tends to $\mathcal{N}(-1)$. As we may take $\eta$ arbitrary close to $1$, we may take the support to be arbitrarily smaller than $1/3$ for weight function $h_{2,T}$.
\end{thm}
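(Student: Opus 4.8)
The plan is to reduce Theorem~\ref{thm: leveltwo} to the $1$-level analysis behind Theorem~\ref{thm:levelone} — run now with test functions whose transforms have support up to $2\sigma$ — together with a single genuinely new main term, the ``diagonal'' covariance of two prime sums, following the combinatorial method of Miller~\cite{Mil0,Mil1}. As a first step, I would feed the explicit formula for each $L(s,u_j)$ into the definition of $D_1$, obtaining for any even Schwartz $\psi$ with $\supp(\widehat\psi)\subset(-2\sigma,2\sigma)$ a decomposition
\begin{equation}
D_1(u_j;\psi)\ =\ \widehat\psi(0)+\frac{\psi(0)}{2}\ -\ S(u_j;\psi)\ +\ (\text{error}),
\end{equation}
where $S(u_j;\psi)$ is the mean-zero, suitably normalized weighted sum of $\lambda_j(p),\lambda_j(p^2),\dots$ over prime powers $\le R^{2\sigma}$ appearing in the proof of Theorem~\ref{thm:levelone} (there the statement $\langle S(u_j;\psi)\rangle\to 0$, proved via Kuznetsov, is exactly what produces the limit $\widehat\psi(0)+\psi(0)/2$).

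Second, I would use the self-duality symmetry $\gamma_{-j}=-\gamma_j$ of the zeros and the evenness of $\phi_1,\phi_2$ to write the exact pointwise identity
\begin{equation}\label{eq:twolevelcombo}
D_2(u_j;\phi_1,\phi_2)\ =\ D_1(u_j;\phi_1)\,D_1(u_j;\phi_2)\ -\ 2\,D_1(u_j;\phi_1\phi_2)\ +\ C_j,
\end{equation}
where the restriction $j_1\ne\pm j_2$ removes the diagonal and anti-diagonal pairs (which coincide only at a central zero), and $C_j$ is the resulting central-zero correction, a multiple of $(\phi_1\phi_2)(0)$ that is nonzero precisely when $\epsilon_j=-1$. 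Averaging \eqref{eq:twolevelcombo} against $h_T(t_j)/\|u_j\|^2$ and dividing by $\sum_j h_T(t_j)/\|u_j\|^2$, the term $C_j$ contributes a constant multiple of $(\phi_1\phi_2)(0)\,\mathcal N(-1)$, and $-2\,D_1(u_j;\phi_1\phi_2)$ contributes $-2\bigl(\widehat{\phi_1\phi_2}(0)+\tfrac12(\phi_1\phi_2)(0)\bigr)$ by Theorem~\ref{thm:levelone} applied to $\phi_1\phi_2$; since $\widehat{\phi_1\phi_2}=\widehat{\phi_1}*\widehat{\phi_2}$ is supported in $(-2\sigma,2\sigma)$, this doubling is exactly where the hypotheses $\sigma<\eta/3$ (for $h_{2,T}$) and $\sigma<1/12$ (for $h_{1,T}$) come from.

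The one new computation is the averaged product $\langle D_1(\phi_1)D_1(\phi_2)\rangle$. Expanding via the explicit formula and using $\langle S(u_j;\phi_i)\rangle\to 0$ for the cross terms, this equals $\bigl(\widehat{\phi_1}(0)+\tfrac12\phi_1(0)\bigr)\bigl(\widehat{\phi_2}(0)+\tfrac12\phi_2(0)\bigr)+\langle S(u_j;\phi_1)\,S(u_j;\phi_2)\rangle+o(1)$. The covariance is a weighted double sum over primes $p,q\le R^\sigma$ of $\lambda_j(p)\lambda_j(q)$, averaged over the family; inserting the Hecke relations $\lambda_j(p)\lambda_j(q)=\lambda_j(pq)$ for $p\ne q$ and $\lambda_j(p)^2=\lambda_j(p^2)+1$, and applying the Kuznetsov formula \eqref{eq:kuznetsovtraceformula} with $(m,n)=(pq,1)$ or $(p^2,1)$, the only surviving main term is the ``$+1$'' diagonal from $p=q$, which after partial summation and the prime number theorem converges to $\tfrac12\int_{-\infty}^\infty|z|\,\widehat{\phi_1}(z)\widehat{\phi_2}(z)\,dz$, matching the orthogonal random-matrix prediction. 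Everything else — the $\lambda_j(pq)$ and $\lambda_j(p^2)$ pieces, the Eisenstein contribution $\overline{\tau(m,r)}\tau(n,r)$, and the Bessel--Kloosterman sum $\sum_{c\ge1}\frac{S(m,n;c)}{c}\int_{-\infty}^\infty J_{2ir}\!\left(\frac{4\pi\sqrt{mn}}{c}\right)\frac{h_T(r)r}{\cosh\pi r}\,dr$ with $mn\le R^{2\sigma}$ — must be shown to be $o(1)$ after normalizing by the total weight. Collecting all the pieces yields the stated formula, and comparison with Miller's $2$-level densities for $\OO$, $\soe$ and $\soo$ \cite{Mil0,Mil1} shows it agrees only with the orthogonal family whose weighted sign ratio is $\mathcal N(-1)$; letting $\eta\to 1$ pushes the admissible support up to just under $1/3$.

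I expect the main obstacle to be this final error control: bounding the Bessel--Kloosterman (and Eisenstein) contributions to $\sum_j \frac{h_T(t_j)}{\|u_j\|^2}\lambda_j(p)\lambda_j(q)$ uniformly for $pq$ as large as $R^{2\sigma}$. This is structurally the same as — but a strict widening of — the estimates already carried out for Theorem~\ref{thm:levelone}: the argument $4\pi\sqrt{pq}/c$ of $J_{2ir}$ now runs up to size $\approx R^{2\sigma}$, the Weil bound for $S(m,n;c)$ together with the support of $\widehat h$ (for $h_{2,T}$) keeps the $c$-sum finite, and the concentration of $h_{2,T}$ in bands of width $L$ about $\pm T$ is what forces these arguments to be large enough to buy the wider support $1/3$ rather than $1/12$. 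Tracking the resulting powers of $T$ reproduces error exponents $2\sigma(3/2+\epsilon)-1/4$ and $2\sigma(3/2+\epsilon)-\eta$, consistent with the stated support ranges, so no analytic input beyond the $1$-level machinery at doubled support plus the elementary covariance evaluation should be required.
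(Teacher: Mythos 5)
Your proposal is correct and follows essentially the same route as the paper: the same combinatorial reduction $D_2 = D_1(\phi_1)\,D_1(\phi_2) - 2D_1(\phi_1\phi_2) + C_j$ with $C_j$ the central-zero correction for odd sign, the same identification of the prime-sum covariance as the only new main term, and the same Kuznetsov error analysis at doubled support; the only (cosmetic) difference is that you extract the diagonal via the Hecke relations $\lambda_j(p)\lambda_j(q)=\lambda_j(pq)$ and $\lambda_j(p)^2=\lambda_j(p^2)+1$ before applying Kuznetsov at $(m,1)$, whereas the paper applies Kuznetsov directly at $(m,n)=(p_1,p_2)$ and lets the built-in $\delta_{m,n}$ supply the diagonal. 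One arithmetic caveat: your own partial-summation computation of the ``$+1$'' diagonal gives $2\int_{-\infty}^{\infty}|z|\widehat{\phi}_1(z)\widehat{\phi}_2(z)\,dz$ (agreeing with the paper's Lemma \ref{lem: S1S1}), so the factor $\tfrac12$ you quote appears to be inherited from a typo in the theorem statement rather than produced by your argument.
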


The paper is organized as follows. In \S\ref{sec:explicit} we discuss the explicit formula and derive the expansion for the 1-level density, Lemma \ref{lem:1levelexpansion}. We review the Kuznetsov Trace Formula in \S\ref{sec: Kuznetsov}, analyzing many of the terms. We then calculate the 1-level density in \S\ref{sec:1level} and the 2-level density in \S\ref{sec:2level}.



\section{Expansion for $1$-Level Density}\label{sec:explicit}

The main result of this section is an expansion for the weighted 1-level density for the family of Maass forms, which is given in Lemma \ref{lem:1levelexpansion}. We first determine the 1-level density for the $L$-function of a single Maass form. Recall that the 1-level density (see \eqref{eqn: 1leveldef}) is a sum over the zeros of the $L$-function of the form $\sum_{\rho} H(\rho)$ for some function $H$; later, we let $H(s)=\phi(\frac{y\log R}{2\pi})$ for $\phi$ an even Schwartz function with Fourier transform compactly supported. We do not need to assume the Generalized Riemann Hypothesis in the arguments below, though its veracity (the non-trivial zeros are of the form $1/2+i\gamma$) leads to a nice spectral interpretation of the 1-level density. Here $R$ is a global scaling parameter, which we determine later in Lemma \ref{lem:findingR}.

The $L$-functions satisfy a function equation:
\begin{eqnarray}
\Lambda(s,u_j) \ = \ \pi^{-s}\Gamma\left(\frac{s+\epsilon_j +it_j}{2}\right)\Gamma\left(\frac{s+\epsilon_j-it_j}{2}\right)L(s,u_j)
\ = \ (-1)^{\epsilon_j}\Lambda(1-s,u_j). \nonumber\\
\end{eqnarray}
Using this functional equation and contour integration, we have the following `explicit formula':
\begin{eqnarray}\label{eqn: explicit}
\sum_{\rho}H(\rho)&= & \frac{1}{2\pi i}\int_{(\frac{3}{2})}\left(-\log(\pi)+\frac{\Gamma'\left(\frac{s+\epsilon_j+it_j}{2}\right)}{2\Gamma \left(\frac{s+\epsilon_j+it_j}{2}\right)} +\frac{\Gamma'\left(\frac{s+\epsilon_j-it_j}{2}\right)}{2\Gamma\left(\frac{s+\epsilon_j-it_j}{2}\right)}\right) [H(s)+H(1-s)]ds\nonumber\\&& \ \ -\
\frac{1}{2\pi i}\int_{(\frac{3}{2})}\left(\sum_p\log p\sum_{k=1}^\infty\frac{\alpha_j^k(p)+\beta_j^k(p)}{p^{sk}}\right)[H(s)+H(1-s)]ds.
\end{eqnarray}
The proof is standard, and proceeds similarly to that for other families; see for example \cite{ILS,RS}. The analysis is thus reduced to understanding the first integral, involving the Gamma factors, and the second integral, involving the Satake parameters $\alpha_j(p), \beta_j(p)$.

\subsection{Analysis of the Gamma factors}

\begin{lem} The Gamma term in \eqref{eqn: explicit} contributes
\begin{eqnarray}
&&
\frac{1}{2\pi i}\int_{(\frac{3}{2})}\left(-\log(\pi)+\frac{\Gamma'\left(\frac{s+\epsilon_j+it_j}{2}\right)}{2\Gamma\left(\frac{s+\epsilon_j+it_j}{2}\right)}
+\frac{\Gamma'\left(\frac{s+\epsilon_j-it_j}{2}\right)}{2\Gamma\left(\frac{s+\epsilon_j-it_j}{2}\right)}\right)[H(s)+H(1-s)]ds\nonumber\\
&& = \ \widehat{\phi}(0) \frac{\log(1+t_j^2)}{\log R} + O\left(\frac{1}{\log R}\right).
\end{eqnarray}
\end{lem}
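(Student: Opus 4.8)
The plan is to shift the contour in the Gamma integral from $\Re(s)=3/2$ to the central line $\Re(s)=1/2$, exploit the evenness of $\phi$ to collapse $H(s)+H(1-s)$ into a single term, and then extract the main term from Stirling's asymptotic for the digamma function. Write $g(s) = -\log\pi + \tfrac12\frac{\Gamma'}{\Gamma}\!\left(\tfrac{s+\epsilon_j+it_j}{2}\right) + \tfrac12\frac{\Gamma'}{\Gamma}\!\left(\tfrac{s+\epsilon_j-it_j}{2}\right)$ for the bracketed factor. In the strip $1/2\le\Re(s)\le 3/2$ the arguments of the two digamma functions have real part $\tfrac{\Re(s)+\epsilon_j}{2}\ge 1/4>0$, so $g$ is holomorphic there (its poles lie at $s=-\epsilon_j\mp it_j-2k$, $k\ge 0$, all with $\Re(s)\le 0$); and since $\phi$ is Schwartz with $\widehat{\phi}$ compactly supported, Paley--Wiener makes $H$ an entire function with rapid decay along horizontal lines in any fixed vertical strip, so the horizontal segments at $\Im(s)=\pm\infty$ vanish and the shift is legitimate with no residues collected. (None of this needs GRH.)

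On the line $s=\tfrac12+i\tau$ we have $1-s=\tfrac12-i\tau$, and since $\phi$ is even, $H(s)+H(1-s)=2\phi\!\left(\tfrac{\tau\log R}{2\pi}\right)$. Substituting $u=\tfrac{\tau\log R}{2\pi}$ and setting $v=\tfrac{2\pi u}{\log R}$, $z_\pm=\tfrac{1/2+\epsilon_j+i(v\pm t_j)}{2}$, the integral becomes
\[
\frac{1}{\log R}\int_{-\infty}^{\infty} 2\phi(u)\left[-\log\pi + \tfrac12\tfrac{\Gamma'}{\Gamma}(z_+) + \tfrac12\tfrac{\Gamma'}{\Gamma}(z_-)\right]du .
\]
Now apply $\tfrac{\Gamma'}{\Gamma}(z)=\log z + O(|z|^{-1})$, valid uniformly for $\Re(z)\ge 1/4$. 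Splitting the $u$-integral into a bulk range, say $|u|\le(\log R)^{9/10}$ (where $v=O((\log R)^{-1/10})$ and $|z_\pm|\asymp 1+|t_j|$, so the digamma terms equal $\tfrac12\log(z_+z_-)+O(\tfrac{1}{1+|t_j|})$) and a tail (killed by the Schwartz decay of $\phi$ against the at-most-logarithmic growth of $g$), a direct multiplication gives $z_+z_-=\tfrac14\big[(\tfrac12+\epsilon_j)^2+t_j^2-v^2+2i(\tfrac12+\epsilon_j)v\big]$, whence $\tfrac12\log(z_+z_-)=\tfrac12\log\tfrac{1+t_j^2}{4}+O\!\left(\tfrac{1}{1+t_j^2}\right)$ — the correction absorbing both the shift $v$ and the difference between $(\tfrac12+\epsilon_j)^2$ and $1$. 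Using $\int 2\phi(u)\,du=2\widehat{\phi}(0)$ and folding the constant $\widehat{\phi}(0)\log(4\pi^2)/\log R$ into the error, the Gamma term becomes $\widehat{\phi}(0)\tfrac{\log(1+t_j^2)}{\log R}+O\!\left(\tfrac{1}{\log R}\right)$, as claimed.

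The contour shift and the bookkeeping of constants are routine; the one place demanding care is making the error genuinely \emph{uniform in $t_j$}. One must verify that the digamma error $O(1/(1+|t_j|))$ and the distortion from the $\tfrac{2\pi u}{\log R}$ shift inside the logarithm are each $O(1/\log R)$ over the whole range — in particular that large $|u|$ (where $z_-$ can be small and the asymptotics break down) is negligible by the rapid decay of $\phi$, and that for bounded $t_j$ the ``main'' term $\widehat{\phi}(0)\log(1+t_j^2)/\log R$ is itself $O(1/\log R)$ so nothing is lost. That uniformity is the only real subtlety; everything else is standard.
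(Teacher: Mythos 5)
Your proposal is correct and follows essentially the same route as the paper: shift the contour to $\Re(s)=1/2$ (no residues, since the digamma arguments stay in the right half-plane), use $\phi$ even to collapse $H(s)+H(1-s)$ into $2\phi\left(\tfrac{\tau\log R}{2\pi}\right)$, apply $\tfrac{\Gamma'}{\Gamma}(z)=\log z+O(1/|z|)$, and integrate to extract $\widehat{\phi}(0)$. The only (cosmetic) difference is that you combine the two digamma terms via $\log(z_+z_-)$ to land directly on $\log(1+t_j^2)$, whereas the paper extracts $\log(1+t_j)$ from each factor and then writes $2\log(1+t_j)=\log(1+t_j^2)+O(1)$; both discrepancies are absorbed into the $O(1/\log R)$ error.
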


\begin{proof} We shift the integral of the Gamma term in \eqref{eqn: explicit} to critical line. For $s=\frac{1}{2}+iy$, $H(1-s)=H(s)=\phi(\frac{y\log R}{2\pi})$. Since $\Gamma$ has no zeroes or poles if $\mbox{Re}(z)>0$, there is no residue and the integrals are equal. We find
\begin{eqnarray}
&&\frac{1}{2\pi i}\int_{(\frac{3}{2})}\left(-\log(\pi)+\frac{\Gamma'\left(\frac{s+\epsilon_j+it_j}{2}\right)}{2\Gamma\left(\frac{s+\epsilon_j+it_j}{2}\right)}+\frac{\Gamma'\left(\frac{s+\epsilon_j-it_j}{2}\right)}{2\Gamma\left(\frac{s+\epsilon_j-it_j}{2}\right)}\right)[H(s)+H(1-s)]ds\nonumber\\
&&=\frac{1}{2\pi i}\int_{(\frac{1}{2})}\left(-\log(\pi)+\frac{\Gamma'\left(\frac{s+\epsilon_j+it_j}{2}\right)}{2\Gamma\left(\frac{s+\epsilon_j+it_j}{2}\right)}+\frac{\Gamma'\left(\frac{s+\epsilon_j-it_j}{2}\right)}{2\Gamma\left(\frac{s+\epsilon_j-it_j}{2}\right)}\right)2H(s)ds\nonumber\\
&&=\frac{1}{\pi}\int_{-\infty}^\infty\left(-\log(\pi)+\frac{\Gamma'(\frac{1}{4}+\frac{\epsilon_j}{2}+\frac{i}{2}(y+t_j))}{2\Gamma(\frac{1}{4}+\frac{\epsilon_j}{2}+\frac{i}{2}(y+t_j))}+\frac{\Gamma'(\frac{1}{4}+\frac{\epsilon_j}{2}+\frac{i}{2}(y-t_j))}{2\Gamma(\frac{1}{4}+\frac{\epsilon_j}{2}+\frac{i}{2}(y-t_j))}\right)\phi\left(\frac{y\log R}{2\pi}\right) dy\nonumber\\
&&=-\frac{1}{\pi}\int_{-\infty}^\infty \log (\pi) \phi(x)\frac{2\pi}{\log R} dx  \nonumber\\
&& \ \ +
\frac{1}{\pi}\int_{-\infty}^\infty\left(
\frac{\Gamma'(\frac{1}{4}+\frac{\epsilon_j}{2}+ \frac{it_j}{2}+\frac{i\pi}{\log R}x)}
{2\Gamma(\frac{1}{4}+\frac{\epsilon_j}{2}+ \frac{it_j}{2}+\frac{i\pi}{\log R}x)}+
\frac{\Gamma'(\frac{1}{4}+\frac{\epsilon_j}{2}- \frac{it_j}{2}+\frac{i\pi}{\log R}x)}
{2\Gamma(\frac{1}{4}+\frac{\epsilon_j}{2}- \frac{it_j}{2}+\frac{i\pi}{\log R}x)}\right)
\phi(x) \frac{2\pi}{\log R}dx. \nonumber\\ \label{eqn: gammaterm1}
\end{eqnarray}
The first integral in \eqref{eqn: gammaterm1} is $O(1/\log R)$ due to the rapid decay of $\phi$, and so does not contribute. Therefore, we just need to consider the second integral.
From equation (6.3.18) of \cite{AS}, we have for $|z|>1$ with $|\arg(z)| < \pi$ that
\be
\frac{\Gamma'(z)}{\Gamma(z)}\ = \ \log(z)-\frac{1}{2z} + \sum_{n=1}^\infty \frac{B_{2n}}{2nz^{2n}},
\ee
where the $B_{2n}$'s are the Bernoulli numbers. Thus, for $|z|>1$, we have
\be
\frac{\Gamma'(z)}{\Gamma(z)}\ =\ \log(z)+ O\left(\frac{1}{|z|}\right).
\ee
For the first term in the second integral in \eqref{eqn: gammaterm1}, we have
\be
z\ =\ \frac{1}{4}+\frac{\epsilon_j}{2}+\frac{it_j}{2}+\frac{i\pi}{\log R}x
\ee
with $t_j > 0$ and $x$ real. Thus we have
\begin{equation}\label{eq:withlogtoexpandlater}
\frac{\Gamma'(\frac{1}{4}+\frac{\epsilon_j}{2}+ \frac{it_j}{2}+\frac{i\pi}{\log R}x)}{ \Gamma(\frac{1}{4}+\frac{\epsilon_j}{2}+ \frac{it_j}{2}+\frac{i\pi}{\log R}x)}
= \log\left(\frac{1}{4}+\frac{\epsilon_j}{2}+ \frac{it_j}{2}+\frac{i\pi}{\log R}x\right) +
O\left(\frac{1}{|\frac{1}{4}+\frac{\epsilon_j}{2}+ \frac{it_j}{2}+\frac{i\pi}{\log R}x|}\right)
\end{equation}
for
\be
\left|\frac{1}{4}+\frac{\epsilon_j}{2}+\frac{it_j}{2}+\frac{i\pi}{\log R}x\right|\ >\ 1.
\ee
Since the only poles of the Gamma function are the non-positive integers and since our $z$ stays away from such integers, the denominators above are well-defined for our $t_j, x$. Expanding the logarithm in \eqref{eq:withlogtoexpandlater}, as $\epsilon_j \in \{0,1\}$ we get
\begin{eqnarray}
\frac{\Gamma'(\frac{1}{4}+\frac{\epsilon_j}{2}+ \frac{it_j}{2}+\frac{i\pi}{\log R}x)}{ \Gamma(\frac{1}{4}+\frac{\epsilon_j}{2}+ \frac{it_j}{2}+\frac{i\pi}{\log R}x)}
\ = \  \log(1+t_j) + O\left(\log(2+|x|) \right).
\end{eqnarray}

Thus
\begin{eqnarray}
&& \frac{1}{\pi}\int_{-\infty}^\infty
\frac{\Gamma'(\frac{1}{4}+\frac{\epsilon_j}{2}+ \frac{it_j}{2}+\frac{i\pi}{\log R}x)}
{2\Gamma(\frac{1}{4}+\frac{\epsilon_j}{2}+ \frac{it_j}{2}+\frac{i\pi}{\log R}x)} \phi(x) \frac{2\pi}{\log R}dx\nonumber\\
&&=\ \ \frac{1}{\log R}\int_{-\infty}^\infty
\left(\log\left(1+t_j\right) + O\left(\log(2+|x|)\right)  \right)  \phi(x) dx\nonumber\\
&&=\ \ \frac{1}{\log R}\int_{-\infty}^\infty
\log(1+t_j) \phi(x) dx
+O\left(\frac{1}{\log R}\right)\nonumber\\
&& =\ \ \widehat{\phi}(0)\frac{\log(1+t_j)}{\log R}+O\left(\frac{1}{\log R}\right).
\end{eqnarray}

The second term in \eqref{eqn: gammaterm1} is handled similarly; the only difference is that we have $-it_j$ instead of $it_j$, which is immaterial since the integrals are from $-\infty$ to $\infty$, $\phi$ is even and $|\Gamma(a+ib)| = |\Gamma(a+ib)|$.

Combining, we find the Gamma terms contribute
\begin{eqnarray}
&&
\frac{1}{2\pi i}\int_{(\frac{3}{2})}\left(-\log(\pi)+\frac{\Gamma'\left(\frac{s+\epsilon_j+it_j}{2}\right)}{2\Gamma\left(\frac{s+\epsilon_j+it_j}{2}\right)}
+\frac{\Gamma'\left(\frac{s+\epsilon_j-it_j}{2}\right)}{2\Gamma\left(\frac{s+\epsilon_j-it_j}{2}\right)}\right)[H(s)+H(1-s)]ds\nonumber\\
&& =\  2\widehat{\phi}(0) \frac{\log(1+t_j)}{\log R} + O\left(\frac{1}{\log R}\right)\nonumber\\
&& = \ \widehat{\phi}(0) \frac{\log(1+t_j^2)}{\log R} + O\left(\frac{1}{\log R}\right).
\end{eqnarray}
\end{proof}

\subsection{Analysis of the Satake parameters}

\begin{lem}\label{lem:expliciteqnnongamma}
The Satake parameters term in \eqref{eqn: explicit} contributes
\begin{eqnarray}\label{eqn: preliminary1level}
&& \frac{1}{2\pi i}\int_{(\frac{3}{2})}\left(\sum_p\log p\sum_{k=1}^\infty\frac{\alpha_j^k(p)+\beta_j^k(p)}{p^{sk}}\right)[H(s)+H(1-s)]ds \nonumber\\
&&= \ \sum_p\sum_{k=1}^\infty\frac{2(\alpha_j^k(p)+\beta_j^k(p))\log p}{p^{k/2}\log R}\widehat{\phi}\left(\frac{k\log p}{\log R}\right)\nonumber\\
&&= \ \sum_p \frac{2\lambda_j(p)\log p}{p^{1/2}\log R}\widehat{\phi}\left(\frac{\log p}{\log R}\right)\ +
\sum_p\frac{2(\lambda_j(p^2)-1)\log p}{p\log R}\widehat{\phi}\left(\frac{2\log p}{\log R}\right)\nonumber\\ & & \ \ \ \ \ \ \ \ + \ O\left(\frac{1}{\log R}\right).
\end{eqnarray}
\end{lem}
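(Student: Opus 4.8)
The plan is to start from the left-hand side of \eqref{eqn: preliminary1level} and carry out the $s$-integral termwise, then truncate the resulting Dirichlet series over prime powers. First I would shift the contour from $\Re(s) = 3/2$ to $\Re(s) = 1/2$; since the inner sum $\sum_p \log p \sum_{k\ge 1}(\alpha_j^k(p)+\beta_j^k(p))p^{-sk}$ converges absolutely for $\Re(s) > 1$ (using $|\alpha_j(p)|, |\beta_j(p)| \le p^{1/2}$ from the trivial bound, or better $p^{7/64}$ from known progress toward Ramanujan, though the trivial bound suffices here with the $p^{k/2}$ in the denominator doing the work) and $H(s) + H(1-s)$ decays rapidly on vertical lines, there are no poles crossed and the shift is justified. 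On the line $\Re(s) = 1/2$, with $s = \tfrac12 + iy$ we have $H(s) = H(1-s) = \phi\!\left(\tfrac{y\log R}{2\pi}\right)$, so each term $p^{-sk} = p^{-k/2} p^{-iky}$ contributes, after the change of variables $x = \tfrac{y\log R}{2\pi}$ and recognizing the Fourier transform, exactly $\tfrac{2\log p}{p^{k/2}\log R}\,\widehat\phi\!\left(\tfrac{k\log p}{\log R}\right)$; summing over $p$ and $k$ gives the first displayed equality. The absolute convergence already noted lets me interchange the sum and integral without comment.

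Next I would isolate the $k=1$ and $k=2$ terms and bound the tail $k \ge 3$. For $k=1$, $\alpha_j(p) + \beta_j(p) = \lambda_j(p)$ by \eqref{eqn: alphabeta}, giving the first main sum. For $k=2$, $\alpha_j(p)^2 + \beta_j(p)^2 = (\alpha_j(p)+\beta_j(p))^2 - 2\alpha_j(p)\beta_j(p) = \lambda_j(p)^2 - 2 = \lambda_j(p^2) - 1$, using $\lambda_j(p^2) = \lambda_j(p)^2 - 1$ from the Hecke relations (equivalently the Euler product \eqref{eqn: product}); this gives the second main sum. For the tail, since $\widehat\phi$ is supported in $(-\sigma,\sigma)$, the summand $\widehat\phi(k\log p/\log R)$ vanishes unless $p^k < R^\sigma$, so for each fixed $k \ge 3$ the sum over $p$ is finite, and bounding $|\alpha_j^k(p) + \beta_j^k(p)| \le 2p^{k/2}\cdot(\text{bounded})$ crudely—or more carefully using $|\alpha_j(p)|, |\beta_j(p)| \le p^{1/7}$ say, but even the Hecke bound $|\lambda_j(p^k)| \le d(p^k) = k+1$ together with $\alpha_j^k + \beta_j^k$ being a bounded polynomial in $\lambda_j(p), \lambda_j(p^2)$ works—the $k$-th term is $O\!\left(\tfrac{1}{\log R}\sum_{p^k < R^\sigma} \tfrac{\log p}{p^{k/2}}\cdot p^{o(1)}\right)$, and summing the geometric-type series over $k \ge 3$ yields $O(1/\log R)$. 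This is where one must be slightly careful to get a bound uniform in $j$ (i.e., not depending on $t_j$), but since all bounds on Satake parameters here are absolute, uniformity is automatic.

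The main obstacle is really just bookkeeping: ensuring the tail bound for $k \ge 3$ is genuinely $O(1/\log R)$ with an absolute implied constant, and confirming the contour shift is legitimate given only the trivial (or mild subconvex) bounds on $\alpha_j(p), \beta_j(p)$ — both are routine because the $p^{-k/2}$ decay more than compensates. No deep input is needed for this lemma; the hard analytic work (the Kuznetsov trace formula and the Bessel–Kloosterman estimates) enters only later when this expansion is averaged over the family. I would present the contour shift and termwise integration as a single paragraph, then devote a short paragraph to the algebraic identities for $k=1,2$ and the tail estimate.
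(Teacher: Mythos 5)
Your overall route is the same as the paper's: establish the first equality by termwise contour shifting and identifying the Fourier transform (the paper simply cites \cite{ILS,RS} for this), then peel off $k=1,2$ via $\alpha_j+\beta_j=\lambda_j(p)$ and $\alpha_j^2+\beta_j^2=\lambda_j(p)^2-2=\lambda_j(p^2)-1$, and bound the tail $k\ge 3$. The algebra for $k=1,2$ is correct.

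There is, however, a genuine gap in your tail estimate, and it is precisely the point where the lemma needs a nontrivial input. You assert that the trivial bound $|\alpha_j(p)|,|\beta_j(p)|\le p^{1/2}$ suffices ``with the $p^{k/2}$ in the denominator doing the work.'' It does not: with the trivial bound, $|\alpha_j^k(p)+\beta_j^k(p)|\,p^{-k/2}\le 2$, so the denominator is exactly cancelled and the $k$-th tail term is $\gg \frac{1}{\log R}\sum_{p\le R^{\sigma/k}}\log p \gg R^{\sigma/k}/\log R$ by Chebyshev, which for $k=3$ is $R^{\sigma/3}/\log R\to\infty$, not $O(1/\log R)$. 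Your fallback, the ``Hecke bound'' $|\lambda_j(p^k)|\le d(p^k)=k+1$ (equivalently $|\alpha_j^k+\beta_j^k|\ll_k 1$), is the Ramanujan--Petersson conjecture, which is \emph{open} for Maass forms --- it is a theorem of Deligne only in the holomorphic case. What is actually required is a bound $|\alpha_j(p)|,|\beta_j(p)|\le p^{\delta}$ with $\delta<1/6$, so that $3(1/2-\delta)>1$ and $\sum_p\sum_{k\ge3} p^{k\delta+\epsilon-k/2}$ converges absolutely, yielding $O(1/\log R)$ uniformly in $j$. The paper invokes Kim--Sarnak's $\delta<7/64$ for exactly this purpose and explicitly remarks that $\delta<1/6$ is the threshold. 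Since you do mention $7/64$ (and $1/7$) parenthetically, your proof is repairable by a one-line substitution, but the claims that the trivial bound works and that ``no deep input is needed for this lemma'' are wrong: a nontrivial bound toward Ramanujan is essential here, and this is the one place in the lemma where something beyond routine bookkeeping is used.
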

\begin{proof}
Using contour integration to shift integrals, we have the following equality:
\begin{eqnarray}\label{eqn: nongamma appendix}
&& \frac{1}{2\pi i}\int_{(\frac{3}{2})}\left(\sum_p\log p\sum_{k=1}^\infty\frac{\alpha_j^k(p)+\beta_j^k(p)}{p^{sk}}\right)[H(s)+H(1-s)]ds \nonumber\\
&& \ = \ \sum_p\sum_{k=1}^\infty\frac{2(\alpha_j^k(p)+\beta_j^k(p))\log p}{p^{k/2}\log R}\widehat{\phi}\left(\frac{k\log p}{\log R}\right).\end{eqnarray}
The proof of this equality is similar to that of other families;
see for example \cite{ILS,RS}.

We can truncate this sum at $k=2$ since $\log p \ll p^{\epsilon}$ for any $\epsilon$ and
$|\alpha_j(p)|, |\beta_j(p)| < p^{\delta}$, where $\delta < 7/64$ by work of Kim-Sarnak \cite{KSa} (note that this bound does not depend on $u_j$). Thus the contribution from $k\ge 3$ is
\begin{eqnarray}
\sum_p\sum_{k=3}^\infty\frac{2(\alpha_j^k(p)+\beta_j^k(p))\log p}{p^{k/2}\log R}\widehat{\phi}\left(\frac{k\log p}{\log R}\right) & \ \ll\ & \frac{1}{\log R}\sum_p\sum_{k=3}^\infty\frac{4 p^{k\delta} p^\epsilon}{p^{k/2}}\widehat{\phi}\left(\frac{k\log p}{\log R}\right) \nonumber\\
&\ \ll\ & \frac{1}{\log R}\sum_p\sum_{k=3}^\infty\frac{p^{k\delta} p^\epsilon}{p^{k/2}}, \label{eqn: nongamma2}
\end{eqnarray}
since $\widehat{\phi}$ has compact support. Since $\delta < 7/64$ (though all we really need is $\delta < 1/6$), then $1/2 -\delta > 1/3$, and we have
\begin{eqnarray}
\frac{1}{\log R}\sum_p\sum_{k=3}^\infty\frac{p^{k\delta} p^\epsilon}{p^{k/2}}
&=& \frac{1}{\log R}\sum_p \frac{p^\epsilon}{ p^{3(1/2-\delta)}} \frac{1}{1-1/p^{1/2-\delta}}\ =\ O\left(\frac{1}{\log R}\right).\ \ \ \ \label{eqn: nongamma3}
\end{eqnarray}

From \eqref{eqn: nongamma2} and \eqref{eqn: nongamma3}, we find
\begin{eqnarray}
&& \sum_p\sum_{k=1}^\infty\frac{2(\alpha_j^k(p)+\beta_j^k(p))\log p}{p^{k/2}\log R}\widehat{\phi}\left(\frac{k\log p}{\log R}\right)\nonumber\\
&& \ =\ \sum_p\sum_{k=1}^2\frac{2(\alpha_j^k(p)+\beta_j^k(p))\log p}{p^{k/2}\log R}\widehat{\phi}\left(\frac{k\log p}{\log R}\right)
+ O\left(\frac{1}{\log R}\right).
\end{eqnarray}
Noting $\alpha_j(p)+\beta_j(p) = \lambda_j(p)$ and $\alpha_j^2(p)+\beta_j^2(p) = \lambda_j(p^2)-1$  (see \eqref{eqn: product}) completes the proof.
\end{proof}


In \S\ref{sec: Kuznetsov}, we handle the $\lambda_j$ terms in \eqref{eqn: preliminary1level}
by averaging over the family of Maass forms by using the Kuznetsov formula.
However, we can consider the  $-2\log p/ p \log R$ term in the second sum
in \eqref{eqn: preliminary1level} without resorting to the Kuznetsov formula, as this term is independent of $u_j$.

\begin{lem}
The $u_j-$independent term in \eqref{eqn: preliminary1level} of Lemma \ref{lem:expliciteqnnongamma} contributes
\begin{equation}
\frac{2}{\log R}\sum_p\frac{\log p}{p}\widehat{\phi}\left(\frac{2 \log p}{\log R}\right)\ =\ \frac{\phi(0)}{2} +
O\left( \frac{\log\log R}{ \log R}\right).
\end{equation}
\end{lem}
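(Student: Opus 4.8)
The plan is to evaluate the sum directly by partial summation, using only the elementary Mertens-type estimate $\sum_{p \le x}\frac{\log p}{p} = \log x + O(1)$. Since $\widehat{\phi}$ is supported in $(-\sigma,\sigma)$, only primes with $\frac{2\log p}{\log R} < \sigma$, i.e.\ $p < R^{\sigma/2}$, contribute, so the sum is finite. Writing $\Theta(x) := \sum_{p \le x}\frac{\log p}{p} = \log x + E(x)$ with $E(x) = O(1)$, and $f(x) := \widehat{\phi}\left(\frac{2\log x}{\log R}\right)$, Abel summation gives
\[
\sum_p \frac{\log p}{p} f(p) \ = \ -\int_{3/2}^\infty \Theta(x) f'(x)\, dx \ = \ -\int_{3/2}^\infty \bigl(\log x + E(x)\bigr)\,\widehat{\phi}'\left(\frac{2\log x}{\log R}\right)\frac{2}{x\log R}\, dx ,
\]
the boundary terms vanishing because $\Theta(3/2)=0$ and $f$ has compact support.

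For the main term, substitute $u = \frac{2\log x}{\log R}$, so $\log x = \frac{u\log R}{2}$ and $\frac{2}{x\log R}\,dx = du$; the contribution of $\log x$ becomes $-\frac{\log R}{2}\int_{u_0}^\infty u\,\widehat{\phi}'(u)\,du$, where $u_0 = \frac{2\log(3/2)}{\log R} = O(1/\log R)$. Since $\widehat{\phi}$ is Schwartz with compact support, integrating by parts gives $-\frac{\log R}{2}\int_{u_0}^\infty u\,\widehat{\phi}'(u)\,du = \frac{\log R}{2}\int_{u_0}^\infty \widehat{\phi}(u)\,du + O(1)$, and extending the lower limit to $0$ costs only $O(1)$ because $u_0 = O(1/\log R)$ and $\widehat{\phi}$ is bounded. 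As $\phi$ is even, $\widehat{\phi}$ is even, so $\int_0^\infty \widehat{\phi}(u)\,du = \tfrac12\int_{-\infty}^\infty \widehat{\phi}(u)\,du = \tfrac12\phi(0)$ by Fourier inversion. Hence the main term equals $\frac{\phi(0)}{4}\log R + O(1)$, and multiplying by $\frac{2}{\log R}$ produces the claimed main term $\frac{\phi(0)}{2}$ plus $O(1/\log R)$.

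It remains to bound the $E(x)$-term. Since $E(x)=O(1)$, $|\widehat{\phi}'|\ll 1$, and the integrand is supported on $x\le R^{\sigma/2}$,
\[
\left|\int_{3/2}^\infty E(x)\,\widehat{\phi}'\left(\frac{2\log x}{\log R}\right)\frac{2}{x\log R}\,dx\right| \ \ll\ \frac{1}{\log R}\int_{3/2}^{R^{\sigma/2}}\frac{dx}{x} \ \ll\ \frac{\log R}{\log R} \ =\ O(1),
\]
so after dividing by $\log R$ this contributes $O(1/\log R)$. Altogether
\[
\frac{2}{\log R}\sum_p \frac{\log p}{p}\widehat{\phi}\left(\frac{2\log p}{\log R}\right) \ =\ \frac{\phi(0)}{2} + O\left(\frac{1}{\log R}\right),
\]
which is in particular within the claimed error $O\left(\frac{\log\log R}{\log R}\right)$. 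There is no serious obstacle here: the only thing needing care is tracking the several $O(1)$ contributions and checking that the boundary and lower-limit terms are negligible after dividing by $\log R$, with the whole argument resting on the single input $\sum_{p\le x}\frac{\log p}{p}=\log x+O(1)$ together with Fourier inversion.
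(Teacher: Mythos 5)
Your proof is correct and takes essentially the route the paper gestures at (the paper omits the details, citing ``Prime Number Theorem and Partial Summation''): Abel summation against the prime counting function, with the main term reducing to $\int_0^\infty \widehat{\phi}(u)\,du = \tfrac12\phi(0)$ via Fourier inversion. In fact your argument is slightly more economical, needing only Mertens' estimate $\sum_{p\le x}\frac{\log p}{p}=\log x+O(1)$ rather than the PNT, and it yields the sharper error $O(1/\log R)$, which is comfortably within the stated $O(\log\log R/\log R)$.
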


The proof is standard and follows from the Prime Number Theorem and Partial Summation; see for instance Appendix C of \cite{Mil1} or chapter 15 of \cite{MT-B}.


\subsection{1-Level Density Expansion}

Combining the analysis of the Gamma and Satake parameter terms in the explicit formula, we've shown

\begin{prop}
The 1-level density for a single level 1 cuspidal Maass form equals
\begin{eqnarray}\label{eq:propexpansionD1}
D_1(u_j; \phi) &=& \widehat{\phi}(0) \frac{\log(1 + t_j^2)}{\log R }+
\frac{\phi(0)}{2} - \sum_p \frac{2\lambda_j(p)\log p}{p^{1/2}\log R}\widehat{\phi}\left(\frac{\log p}{\log R}\right)\nonumber\\
& & \ - \ \sum_p\frac{2\lambda_j(p^2)\log p}{p\log R}\widehat{\phi}\left(\frac{2\log p}{\log R}\right)
+O\left(\frac{\log\log R}{\log R}\right).\ \ \ \  \label{eqn: onefunction}
\end{eqnarray}
\end{prop}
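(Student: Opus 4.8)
The plan is to simply assemble the three pieces already established in this section. Starting from the explicit formula \eqref{eqn: explicit}, the left side is $\sum_\rho H(\rho) = D_1(u_j;\phi)$ once we substitute $H(s) = \phi\big(\frac{(s-1/2)\log R}{2\pi i}\big)$ (so that on the critical line $\rho = 1/2 + i\gamma_j$ we recover $\phi(\frac{\log R}{2\pi}\gamma_j)$), using that $\widehat\phi$ compactly supported forces the sum over zeros to converge and justifies the contour shifts. The right side of \eqref{eqn: explicit} splits into the Gamma-factor integral and the Satake-parameter integral. First I would quote the first lemma of this subsection, which evaluates the Gamma-factor integral as $\widehat\phi(0)\frac{\log(1+t_j^2)}{\log R} + O(1/\log R)$. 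Next I would quote Lemma \ref{lem:expliciteqnnongamma}, which rewrites the Satake-parameter integral — note the overall sign: it enters \eqref{eqn: explicit} with a minus sign — as
\[
-\sum_p \frac{2\lambda_j(p)\log p}{p^{1/2}\log R}\widehat\phi\!\left(\frac{\log p}{\log R}\right)
-\sum_p \frac{2(\lambda_j(p^2)-1)\log p}{p\log R}\widehat\phi\!\left(\frac{2\log p}{\log R}\right) + O\!\left(\frac{1}{\log R}\right).
\]

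Then I would separate out the $u_j$-independent part: the $+1$ inside $(\lambda_j(p^2)-1)$ contributes (with its sign) $+\frac{2}{\log R}\sum_p \frac{\log p}{p}\widehat\phi\big(\frac{2\log p}{\log R}\big)$, which by the preceding lemma (Prime Number Theorem plus partial summation) equals $\frac{\phi(0)}{2} + O\big(\frac{\log\log R}{\log R}\big)$. Combining the Gamma contribution, the two prime sums in $\lambda_j(p)$ and $\lambda_j(p^2)$, the $\frac{\phi(0)}{2}$ term, and absorbing all the $O(1/\log R)$ errors into the single $O\big(\frac{\log\log R}{\log R}\big)$ error (which dominates), yields exactly \eqref{eqn: onefunction}.

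There is essentially no obstacle here: the Proposition is purely a bookkeeping consolidation of the three lemmas just proved, so the only things to be careful about are (i) the sign with which the Satake term enters the explicit formula, (ii) correctly peeling the $-1$ out of $\lambda_j(p^2)-1$ and routing it through the Prime Number Theorem lemma, and (iii) checking that every error term introduced is $O\big(\frac{\log\log R}{\log R}\big)$ or smaller so that the stated error is valid. The genuinely hard work — averaging the surviving $\lambda_j(p)$ and $\lambda_j(p^2)$ sums over the family via the Kuznetsov formula — is explicitly deferred to \S\ref{sec: Kuznetsov}, so it plays no role in this Proposition.
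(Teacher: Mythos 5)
Your proposal is correct and matches the paper exactly: the Proposition is stated there as an immediate consolidation of the Gamma-factor lemma, Lemma \ref{lem:expliciteqnnongamma}, and the Prime Number Theorem lemma for the $u_j$-independent piece, with no further argument given. Your sign-tracking (the Satake term entering \eqref{eqn: explicit} with a minus, and the $-1$ in $\lambda_j(p^2)-1$ producing the $\phi(0)/2$) is precisely the bookkeeping the paper intends.
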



Before we use this proposition to obtain the average weighted 1-level density, we first discuss the scaling constant $R$. Its purpose is to normalize the low-lying zeros for comparison between different families and with the random matrix ensembles. Specifically, we choose it so that we have a mean spacing of 1 near the central point, or the average of the first term in \eqref{eq:propexpansionD1} is $\widehat{\phi}(0)$. Explicitly, we need to pick $R$ such that
\begin{equation}\label{eqn: R}
\frac{1}{\sum_j \frac{h_T(t_j)}{\|u_j \|^2}} \sum_j \frac{h_T(t_j)}{\|u_j\|^2}\frac{\log(1 +t_j^2)}{\log R}\ =\ 1 + o(1).
\end{equation}
The formula clearly suggests that we take $R =T^2$. The proof requires the use of the Kuznetsov trace formula, and depends on a result to be proved in the next section; we give the rest of the argument here in order to be able to write down the final version of our 1-level density.\footnote{Actually, we'll see the proof for $h_{2,T}$ does not require the Kuznetsov trace formula, as the $|t_j|$ are tightly localized around $T \to\infty$ and thus there is no appreciable variation in the $\log(1+t_j^2)$ factors. The situation is different for $h_{1,T}$, as there $\log(1+t_j^2)$ varies greatly over the range. While with more careful book-keeping one may be able to avoid using the trace formula here, it does not seem worth the effort as we need to use the trace formula elsewhere in the proof.}

\begin{lem}\label{lem:findingR} Let $h_T$ be as in \eqref{eq:defht1} or \eqref{eq:defht2}. If $R = T^2$ and $\frac{\pi}{2\log T} < (1-\eta)L = o(T^{1/8})$, then \eqref{eqn: R} holds, with the little-oh term at most $O\left(1/\log R\right)$.
\end{lem}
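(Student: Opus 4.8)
The plan is to fix $R=T^2$, so that $\log R=2\log T$, and then to show that the weighted average of $\log(1+t_j^2)$ over the family equals $2\log T+O(1)$ (or better); dividing by $\log R$ then gives \eqref{eqn: R} in the sharp form $1+O(1/\log R)$ claimed. I would treat the two admissible weights separately.

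\textbf{The localized weight $h_{2,T}$.} Here I would not need the trace formula for the numerator. Since $h$ is even and Schwartz, $h((r\mp T)/L)\ll_A(1+|r\mp T|/L)^{-A}$ for every $A$; combining this with $\|u_j\|^{-2}\ll|\lambda_j|^{\epsilon}$ and Weyl's law to count the $t_j$ in polynomial ranges, the contribution to $\sum_j\frac{h_{2,T}(t_j)}{\|u_j\|^2}\log(1+t_j^2)$ from $t_j$ with $\big||t_j|-T\big|>LT^{\epsilon}$ is $\ll T^{-100}$ once $A$ is large. For every remaining $t_j$ one has $|t_j|=T+O(LT^{\epsilon})$, hence $\log(1+t_j^2)=2\log|t_j|+O(t_j^{-2})=2\log T+O(LT^{\epsilon-1})$ uniformly. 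To turn this into a statement about the average I still need a lower bound for the normalization; this I would get from the $m=n=1$ diagonal term of \eqref{eq:kuznetsovtraceformula} (the same input needed to normalize the $1$-level density itself), namely $\frac{1}{\pi^2}\int r\tanh(r)h_{2,T}(r)\,dr=\frac{LT}{\pi^2}(1+o(1))$, with the Eisenstein and Bessel--Kloosterman pieces of smaller order, so $\sum_j\frac{h_{2,T}(t_j)}{\|u_j\|^2}\gg LT$. Then
\[
\frac{\sum_j\frac{h_{2,T}(t_j)}{\|u_j\|^2}\log(1+t_j^2)}{\sum_j\frac{h_{2,T}(t_j)}{\|u_j\|^2}}\ =\ 2\log T+O(LT^{\epsilon-1}),
\]
and dividing by $\log R=2\log T$ proves \eqref{eqn: R}, with error $O(1/\log R)$ (in fact far smaller, since $L=o(T^{1/8})$).

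\textbf{The spread-out weight $h_{1,T}$.} Now $\log(1+t_j^2)$ varies over a long interval, so I would apply \eqref{eq:kuznetsovtraceformula} with $m=n=1$ (recall $\lambda_j(1)=1$) twice: once with weight $h_{1,T}$, and once with weight $g_T(r):=h_{1,T}(r)\log(1+r^2)$. The first step is to check $g_T$ is admissible: it is even, it is holomorphic in $|{\rm Im}(r)|\le\frac12+\delta$ because the only singularities of $\log(1+r^2)$ are the branch points $r=\pm i$, which lie outside this strip for $\delta<\frac12$, and it inherits the rapid decay of $h_{1,T}$. For either weight $\widetilde h$, using $|\Gamma(\frac12+ir)|^2=\pi/\cosh(\pi r)$ the continuous-spectrum term simplifies to $\frac{1}{4\pi}\int\frac{\widetilde h(r)}{|\zeta(1+2ir)|^2}\,dr$, which by $|\zeta(1+2ir)|^{-1}\ll\log(|r|+2)$ is $O(T\log^3 T)$; the Bessel--Kloosterman sum over $c$ is $o(T^2)$ by the estimates of \S\ref{sec: Kuznetsov} specialized to $m=n=1$. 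The diagonal main terms are $\frac{1}{\pi^2}\int r\tanh(r)h_{1,T}(r)\,dr$ and $\frac{1}{\pi^2}\int r\tanh(r)g_T(r)\,dr$, and a routine Laplace-type computation (for the latter, integrate by parts to reduce to the exponential integral $E_1(1/T^2)$) gives
\[
\int_{-\infty}^{\infty}r\tanh(r)e^{-r^2/T^2}\,dr=T^2+O(1),\qquad \int_{-\infty}^{\infty}r\tanh(r)e^{-r^2/T^2}\log(1+r^2)\,dr=2T^2\log T+O(T^2).
\]
Dividing the two instances of \eqref{eq:kuznetsovtraceformula} then yields
\[
\frac{\sum_j\frac{h_{1,T}(t_j)}{\|u_j\|^2}\log(1+t_j^2)}{\sum_j\frac{h_{1,T}(t_j)}{\|u_j\|^2}}\ =\ \frac{\frac{1}{\pi^2}(2T^2\log T+O(T^2))+O(T\log^3 T)}{\frac{1}{\pi^2}(T^2+O(1))+O(T\log^3 T)}\ =\ 2\log T+O(1),
\]
and dividing by $\log R=2\log T$ again gives \eqref{eqn: R} with error $O(1/\log R)$.

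\textbf{The main obstacle.} For $h_{2,T}$ everything is bookkeeping once the normalization $\sum_j h_{2,T}(t_j)\|u_j\|^{-2}\gg LT$ is granted. For $h_{1,T}$ the two substantive points are (i) verifying that $g_T=h_{1,T}\cdot\log(1+r^2)$ is an admissible Kuznetsov weight---this is exactly where the location $r=\pm i$ of the branch points of $\log(1+r^2)$, together with $\delta<\frac12$, enters---and (ii) controlling the Bessel--Kloosterman term uniformly; (ii) is the deepest ingredient, but in the $m=n=1$ case it is a special (and far easier) case of what \S\ref{sec: Kuznetsov} does in general, so here I would simply quote that it is $o(T^2)$. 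The only remaining input is the elementary asymptotic $\int r\tanh(r)e^{-r^2/T^2}\log(1+r^2)\,dr=2T^2\log T+O(T^2)$, which says the ``typical'' weighted eigenvalue sits at height $\asymp T$ and thereby pins down the choice $R=T^2$.
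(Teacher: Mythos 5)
Your proposal is correct and follows essentially the same route as the paper: for $h_{2,T}$ you localize the eigenvalues near $\pm T$ so that $\log(1+t_j^2)=\log T^2+O(LT^{\epsilon-1})$ and then invoke the $m=n=1$ Kuznetsov evaluation $\sum_j h_{2,T}(t_j)\|u_j\|^{-2}\sim LT/\pi^2$, while for $h_{1,T}$ you apply the trace formula with the modified weight $h_{1,T}(r)\log(1+r^2)$ and evaluate the hyperbolic tangent integral as $T^2\log(T^2)+O(T^2)$, exactly as in the paper. The one point you make explicit that the paper leaves implicit is the admissibility of $h_{1,T}(r)\log(1+r^2)$ as a Kuznetsov weight (holomorphy in $|{\rm Im}(r)|\le\frac12+\delta$ since the branch points of $\log(1+r^2)$ sit at $r=\pm i$), which is a worthwhile, if minor, addition.
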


Before proving the above lemma, we first state a needed result.

\begin{lem}\label{lem:findingRprep} Let $h_T$ be as in \eqref{eq:defht1} (i.e., $h_{1,T}$) or \eqref{eq:defht2} (i.e., $h_{2,T}$). Then\be\label{eqn: B} \twocase{\int_{-\infty}^\infty r \tanh(r)h_T(r)dr \ = \ }{T^2 + O(1)}{if $h_T = h_{1,T}$}{LT + O(1)}{if $h_T = h_{2,T}$.} \ee
\end{lem}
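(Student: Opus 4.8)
The plan is to evaluate the integral $\int_{-\infty}^\infty r\tanh(r) h_T(r)\,dr$ in each of the two cases by exploiting that $\tanh(r) = \mathrm{sgn}(r) + (\text{exponentially small error})$. Since $h_T$ is even in both cases, the integrand $r\tanh(r)h_T(r)$ is even, so we may write $\int_{-\infty}^\infty r\tanh(r)h_T(r)\,dr = 2\int_0^\infty r\tanh(r)h_T(r)\,dr$. On $(0,\infty)$ we have $\tanh(r) = 1 - \frac{2}{e^{2r}+1}$, so the integral splits as $2\int_0^\infty r\,h_T(r)\,dr - 4\int_0^\infty \frac{r\,h_T(r)}{e^{2r}+1}\,dr$. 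The second piece is a bounded error term: since $h_T$ is bounded (uniformly in $T$ by its definition, as $|h_{1,T}|\le 1$ and $h_{2,T}$ is a fixed bounded Schwartz function rescaled and shifted), and $\int_0^\infty \frac{r}{e^{2r}+1}\,dr$ converges, this contributes $O(1)$ in both cases. So the problem reduces to evaluating $2\int_0^\infty r\,h_T(r)\,dr = \int_{-\infty}^\infty |r|\,h_T(r)\,dr$.

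For $h_T = h_{1,T}(r) = \exp(-r^2/T^2)$, we compute $\int_{-\infty}^\infty |r| e^{-r^2/T^2}\,dr = 2\int_0^\infty r e^{-r^2/T^2}\,dr = T^2$ exactly (substitute $u = r^2/T^2$). Hence the full integral is $T^2 + O(1)$, as claimed. For $h_T = h_{2,T}(r) = \frac12 h\big(\frac{r-T}{L}\big) + \frac12 h\big(\frac{r+T}{L}\big)$, by the evenness of $h$ the two bumps contribute equally to $\int|r|h_{2,T}(r)\,dr$, and after substituting $u = (r-T)/L$ in the first, we get $\int_{-\infty}^\infty |T + Lu|\,h(u)\,du$. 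Since $L = o(T^{1/8}) = o(T)$ and $h$ has rapidly decaying (Schwartz) tails with $\int h = 1$, for $T$ large the factor $|T + Lu|$ equals $T + Lu$ except on a set of exponentially small $h$-measure, so $\int |T+Lu| h(u)\,du = T\int h(u)\,du + L\int u\,h(u)\,du + O(T^{-A}) = T + 0 + O(1)$ — here $\int u\,h(u)\,du = 0$ because $h$ is even, and in fact the $O(1)$ is genuinely $O(T^{-A})$ for every $A$. Thus $\int_{-\infty}^\infty |r| h_{2,T}(r)\,dr = T + O(1)$; wait — this gives $T$, not $LT$.

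Let me reconsider: the claimed answer $LT + O(1)$ indicates that $h_{2,T}$ is \emph{not} being normalized to integrate to $1$ inside this lemma, or that there is a factor of $L$ I am dropping — indeed, re-examining equation \eqref{eq:defht2}, if one reads it with the understanding that the relevant scaling for the trace-formula main term carries the width $L$ (as foreshadowed by the remark that the normalizations were chosen to simplify this very integral), then $\int h\big(\frac{r\mp T}{L}\big)\,dr = L$ rather than $1$; so $\int_{-\infty}^\infty |r| h_{2,T}(r)\,dr = L\int|T + Lu|h(u)\,du = L\big(T + O(1)\big) = LT + O(L) = LT + O(1)$ is \emph{not} quite right either unless $L = O(1)$. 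The cleanest correct statement: after the substitution the first bump gives $\frac12\int |T + Lu|\cdot h(u)\cdot L\,du = \frac{L}{2}\big(T + O(L^{-\infty})\big)$ using $\int h = 1$ and $\int u h = 0$, and the second bump gives the same, for a total of $LT + O(L\cdot T^{-A})$; combined with the $O(1)$ tanh-error and noting $L = o(T^{1/8})$ makes $L\cdot T^{-A} = o(1)$, we obtain $LT + O(1)$.

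\begin{proof}
Since $\tanh$ is odd and $h_T$ is even in both cases, the integrand is even, so
\[
\int_{-\infty}^\infty r\tanh(r)h_T(r)\,dr \ = \ 2\int_0^\infty r\tanh(r)h_T(r)\,dr.
\]
On $(0,\infty)$ write $\tanh(r) = 1 - \tfrac{2}{e^{2r}+1}$, giving
\[
\int_{-\infty}^\infty r\tanh(r)h_T(r)\,dr \ = \ \int_{-\infty}^\infty |r|\,h_T(r)\,dr \ - \ 4\int_0^\infty \frac{r\,h_T(r)}{e^{2r}+1}\,dr.
\]
The functions $h_{1,T}$ and $h_{2,T}$ are uniformly bounded in $T$ (the first by $1$, the second by $\|h\|_\infty$), and $\int_0^\infty \frac{r}{e^{2r}+1}\,dr < \infty$, so the last integral is $O(1)$ uniformly in $T$. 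It remains to evaluate $\int_{-\infty}^\infty |r|\,h_T(r)\,dr$.

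For $h_T = h_{1,T}$, substituting $u = r^2/T^2$ gives $\int_{-\infty}^\infty |r| e^{-r^2/T^2}\,dr = 2\int_0^\infty r e^{-r^2/T^2}\,dr = T^2$, so the integral equals $T^2 + O(1)$.

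For $h_T = h_{2,T}$, by evenness of $h$ the two bumps contribute equally. Substituting $u = (r-T)/L$ in the first,
\[
\int_{-\infty}^\infty |r|\,h_{2,T}(r)\,dr \ = \ L\int_{-\infty}^\infty |T + Lu|\,h(u)\,du.
\]
Since $h$ is Schwartz with $\int_{-\infty}^\infty h(u)\,du = 1$ and $\int_{-\infty}^\infty u\,h(u)\,du = 0$ (as $h$ is even), and since $L = o(T^{1/8}) = o(T)$, for $T$ large we have $|T + Lu| = T + Lu$ outside a set on which $h$ decays faster than any power of $T$; hence
\[
\int_{-\infty}^\infty |T + Lu|\,h(u)\,du \ = \ T\int_{-\infty}^\infty h(u)\,du + L\int_{-\infty}^\infty u\,h(u)\,du + O_A(T^{-A}) \ = \ T + O_A(T^{-A})
\]
for every $A > 0$. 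Therefore $\int_{-\infty}^\infty |r|\,h_{2,T}(r)\,dr = LT + O_A(L\,T^{-A})$, and since $L = o(T^{1/8})$ this error is $o(1)$. Combining with the $O(1)$ from the $\tanh$ correction yields $\int_{-\infty}^\infty r\tanh(r)h_{2,T}(r)\,dr = LT + O(1)$.
\end{proof}
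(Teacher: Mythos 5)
Your proof is correct and follows essentially the same route as the paper's: reduce to $r>0$ by evenness, replace $\tanh(r)$ by $1$ up to an exponentially small correction contributing $O(1)$, and evaluate the remaining integral by direct substitution (exactly for the Gaussian, and via $u=(r\mp T)/L$ with $\int h=1$, $\int u\,h(u)\,du=0$ for $h_{2,T}$). Your use of the exact identity $\tanh(r)=1-2/(e^{2r}+1)$ and your explicit handling of $|T+Lu|$ versus $T+Lu$ are slightly tidier than the paper's, but the argument is the same; note only that the exploratory misstep about the Jacobian in your preamble should be deleted, as the final proof already has it right.
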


\begin{proof} We prove the second claim; the first follows similarly. Note that $\tanh(r)$ is odd and
\be
\tanh(r)\ =\  1 + O(e^{-2r})
\ee
for $r> 0$. As $h_T(r)$ is an even function and $\tanh(r)$ and $r$ are odd functions,
\begin{eqnarray}
\int_{-\infty}^\infty r \tanh(r)h_T(r)dr &=&  2\int_0^\infty r \tanh(r) h_T(r) dr \nonumber\\
&=& 2\int_0^\infty r \left(\frac12 h\left(\frac{r-T}{L}\right) + \frac12 h\left(\frac{r+T}{L}\right)\right)
(1+ O(e^{-2r}))  dr \nonumber\\
&=& 2\int_0^\infty r \left(\frac12 h\left(\frac{r-T}{L}\right) + \frac12 h\left(\frac{r+T}{L}\right)\right)dr + O\left( \int_0^\infty  re^{-2r}dr\right)\nonumber\\
& = & \int_{-T/L}^\infty (rL+T) h(r) Ldr + O(1) + O(1) \nonumber\\
&=& LT \int_{-\infty}^\infty h(r)dr + O(1) \ = \ LT + O(1),
r\end{eqnarray} where we used the facts that $h$ is an even Schwartz function (so $\int r h(r)dr = 0$), $h$ integrates to 1, and the relative sizes of $T$ and $L$.
\end{proof}

\begin{proof}[Proof of Lemma \ref{lem:findingR}] We first give the proof for $h_T = h_{2,T}$. Unfortunately we cannot just apply the Kuznetsov formula (see \eqref{eq:kuznetsovtraceformula}) with test function $h_{2,T}(t_j) \log (1+t_j^2)$, as our test function is supposed to have compact support (though we can argue along these lines for $h_{1,T}$, as we have weaker conditions there). Instead, we note that $h_{2,T}(t) = \frac12 h((t-T)/L) + \frac12 h((t+T)/L)$, with $\frac{\pi}{2\log T} < (1-\eta)L = o(T^{1/8})$ and $h$ a non-negative even Schwartz function. Thus the only $t_j$ that contribute are those near $\pm T$; we are quite safe if we only consider $\left||t_j| - T\right| \le LT^{1/2012}$, as the contribution from the $t_j$ further away from $\pm T$ is negligible and we have good control over the norms $||u_j||$ (see \eqref{eq:boundsnormuj}). For these restricted $t_j$, we have \be \log(1+t_j^2) \ = \ \log T^2 + O\left(L/T^{2010/2011}\right) \ = \ \log T^2 + O(T^{-6/8}).\ee We now remove our restriction on the $t_j$'s, and restore the sum to all values. We find that
\begin{eqnarray}\label{eqn: R1}
\sum_j \frac{h_T(t_j)}{\|u_j\|^2}\log(1 +t_j^2)
& \ =\ & \sum_j \frac{h_T(t_j)}{\|u_j\|^2}\left(\log(T^2) + O(T^{-3/4})\right)\nonumber\\ &=&
\frac1{\pi^2}\int_{-\infty}^\infty r \tanh(r) h_T(r) \log(T^2)dr + O(LT^{-1/4}) \nonumber\\ & & \ \ + \ O\left( L e^{\pi/2L} \log \left(T^{-1} e^{\pi/L}\right) \log (T^2)\right);
\end{eqnarray} here we used the analysis from the proof of Lemma \ref{lem: kuznetsovapproximation} below to bound two of the terms from the Kuznetsov formula. The proof is completed by using Lemma \ref{lem:findingRprep} to replace the integral with $LT\log(T^2)$. Taking $R=T^2$, we see equation \eqref{eqn: R} follows immediately (as the main term of the sum of the weights is $LT$). This completes the analysis for $h_{2,T}$.

We sketch the proof for $h_T = h_{1,T}$. We apply the Kuznetsov trace formula with weight function $h_{1,T}(t) \log(1+t^2)$. The analysis is similar. We can wait to expand the logarithm until the hyperbolic tangent integral term, which can be directly evaluated as $\tanh(r)$ is odd and equal to $1 + O(e^{-2r})$: \bea \int_{-\infty}^\infty r \tanh(r) h_{1,T}(r) \log (1+r^2)dr  & \ = \ & 2\int_0^\infty r \left(1 + O\left(e^{-2r}\right)\right) e^{-r^2/T^2} \log(1+r^2) dr \nonumber\\ &=& \int_0^\infty \log(1+u) e^{-u/T^2} du + O\left(\log^2 T\right) \nonumber\\ &=& \int_{\sqrt{\log T}}^{T^4} \left[\log \frac{u}{T^2} + \log(T^2)\right] e^{-u/T^2} + O\left(\log^2 T\right) \nonumber\\ &=& T^2 \log(T^2) + O(T^2).
\eea
Therefore
\begin{equation}\label{eqn: Rapprox2}
\frac{1}{\sum_j \frac{h_{1,T}(t_j)}{\|u_j \|^2}} \sum_j \frac{h_{1,T}(t_j)}{\|u_j\|^2}\frac{\log(1 +t_j^2)}{\log R}\  =\
\frac{\log T^2}{\log R} + O\left(\frac1{\log R}\right),
\end{equation}
and so if $R= T^2$, the sum in \eqref{eqn: Rapprox2} is 1 plus $O(1/\log R)$.
\end{proof}

\begin{rek} We do not need to use the Kuznetsov trace formula to derive \eqref{eqn: R1}, as all that matters is that up to a negligible error the left hand side is $\log(T^2)$ times the sum of our weights. We use the Kuznetsov trace formula here to keep the argument similar to the analysis for $h_{1,T}$, where we do need the trace formula. \end{rek}

Our above results immediately yield

\begin{lem}[1-Level Density Expansion]\label{lem:1levelexpansion} Let $h_T$ be as in \eqref{eq:defht1} or \eqref{eq:defht2}, and assume $\frac{\pi}{2\log T} < (1-\eta)L = o(T^{1/8})$. Let $\phi$ be an even Schwartz function whose Fourier transform has compact support. The weighted 1-level density of the family of level 1 cuspidal Maass forms is
\begin{eqnarray}
\mathcal{D}_1(\phi,h_T) & \ = \ & \frac{1}{\sum_j \frac{h_T(t_j)}{\|u_j\|^2}} \sum_j \frac{h_T(t_j)}{\|u_j\|^2} D_1(u_j; \phi) \nonumber\\
&=& \frac{\phi(0)}{2} + \widehat{\phi}(0)  \nonumber\\
& & \ -\ \frac{1}{\sum_j \frac{h_T(t_j)}{\|u_j\|^2}} \sum_p \frac{2\log p}{p^{1/2} \log R} \widehat{\phi}\left(\frac{\log p}{\log R}\right)
\sum_j \frac{h_T(t_j)}{\|u_j\|^2} \lambda_j(p)\nonumber\\
& &\ -\ \frac{1}{\sum_j \frac{h_T(t_j)}{\|u_j\|^2}} \sum_p \frac{2\log p}{p \log R} \widehat{\phi}\left(\frac{2\log p}{\log R}\right)
\sum_j \frac{h_T(t_j)}{\|u_j\|^2} \lambda_j(p^2)\nonumber\\ & & \ +\ O\left(\frac{\log\log R}{\log R}\right).\label{eqn: 1 level average preliminary}
\end{eqnarray}
\end{lem}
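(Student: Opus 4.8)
The plan is to assemble Lemma~\ref{lem:1levelexpansion} directly from the three results proved earlier in this section, together with Lemma~\ref{lem:findingR}. We start from the Proposition giving the expansion of $D_1(u_j;\phi)$ for a single Maass form, namely equation~\eqref{eq:propexpansionD1}. Then I would divide by $\sum_j h_T(t_j)/\|u_j\|^2$ and sum against the weights $h_T(t_j)/\|u_j\|^2$, interchanging the finite sum over primes (legitimate since $\widehat\phi$ has compact support, so only finitely many $p$ contribute) with the weighted sum over $j$. The term $\phi(0)/2$ and the $u_j$-independent error $O(\log\log R/\log R)$ pass through the averaging unchanged, since the weights are normalized to sum to $1$.

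The first substantive step is to handle the leading term $\widehat\phi(0)\,\log(1+t_j^2)/\log R$: its weighted average is precisely the quantity appearing in equation~\eqref{eqn: R}, which Lemma~\ref{lem:findingR} evaluates (with $R=T^2$) as $\widehat\phi(0)\bigl(1+o(1)\bigr) = \widehat\phi(0) + O(1/\log R)$. This absorbs cleanly into the claimed main term $\widehat\phi(0)$ plus the stated error. The second step is bookkeeping on the two prime sums: the sum $-\sum_p \frac{2\lambda_j(p)\log p}{p^{1/2}\log R}\widehat\phi\bigl(\frac{\log p}{\log R}\bigr)$, after averaging, becomes $-\frac{1}{\sum_j h_T(t_j)/\|u_j\|^2}\sum_p \frac{2\log p}{p^{1/2}\log R}\widehat\phi\bigl(\frac{\log p}{\log R}\bigr)\sum_j \frac{h_T(t_j)}{\|u_j\|^2}\lambda_j(p)$, which is exactly the third line of~\eqref{eqn: 1 level average preliminary}; similarly the $\lambda_j(p^2)$ sum gives the fourth line. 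Note that in~\eqref{eq:propexpansionD1} the second prime sum already has the $u_j$-independent $-1$ split off and absorbed into the $\phi(0)/2$ (via the preceding Lemma on $\frac{2}{\log R}\sum_p \frac{\log p}{p}\widehat\phi(\frac{2\log p}{\log R})$), so only the $\lambda_j(p^2)$ piece remains to be averaged over the family.

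There is essentially no obstacle here: this lemma is purely an assembly of already-established facts, and the phrase ``Our above results immediately yield'' in the excerpt signals exactly that. The only point requiring a word of care is the interchange of the prime sum and the $j$-sum, which is justified because $\supp\widehat\phi$ is compact and hence the $p$-sum is finite for each fixed $R$, so no convergence issue arises; and the observation that the error terms in~\eqref{eq:propexpansionD1}, being $O(\log\log R/\log R)$ uniformly in $j$ (the Gamma-factor error is $O(1/\log R)$ and the $k\ge 3$ Satake contribution is $O(1/\log R)$, both uniform in $t_j$ by the $t_j$-independent Kim--Sarnak bound $\delta<7/64$), survive the normalized averaging unchanged. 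Thus the proof is a one-line deduction: substitute~\eqref{eq:propexpansionD1} into the definition of $\mathcal{D}_1(\phi,h_T)$, apply Lemma~\ref{lem:findingR} to the $\log(1+t_j^2)$ term, and collect terms.
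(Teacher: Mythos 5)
Your proposal is correct and follows exactly the paper's own argument: substitute the single-form expansion \eqref{eq:propexpansionD1} into the weighted average, pull out the uniform $O(\log\log R/\log R)$ error (using positivity of the weights), apply Lemma~\ref{lem:findingR} to replace the averaged $\log(1+t_j^2)/\log R$ term by $\widehat{\phi}(0)$ plus an absorbed error, and interchange the $j$- and $p$-sums via the finiteness of the prime sum from the compact support of $\widehat{\phi}$. Nothing to add.
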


\begin{proof}
We can pull out the $O(\log \log R/ \log R)$ term from the sum over $u_j$ as the implied constant does not depend on $u_j$ and $h_T(t_j)/\|u_j\|^2$ is positive so that oscillation does not make this sum larger. We used Lemma \ref{lem:findingR} to replace $\frac{\widehat{\phi}(0)}{\sum_j \frac{h_T(t_j)}{\|u_j\|^2}} \sum_j \frac{h_T(t_j)}{\|u_j\|^2} \frac{\log(1 +t_j^2)}{\log R}$ with $\widehat{\phi}(0)$ plus errors subsumed by the other error terms. Finally, we can interchange the sum over $j$ and $p$ in the last two sums
in \eqref{eqn: 1 level average preliminary}  since the sum over $p$ is a finite sum as $\widehat{\phi}$ has compact support. \end{proof}

Therefore, to find the weighted average 1-level density, we need to evaluate the last two sums in \eqref{eqn: 1 level average preliminary}, which are all sums over the Fourier coefficients of the $u_j$'s. We do this in Sections \ref{sec: Kuznetsov} and \ref{sec:1level} by using the Kuznetsov trace formula for Maass cusp forms, which handles such sums.

\section{Kuznetsov Trace Formula}\label{sec: Kuznetsov}

\subsection{Expansion}

Recall that $\{u_j\}$ is a basis for the space of level 1 cuspidal Maass forms, and that the Fourier expansion of $u_j$ is
\be
u_j(z)\ =\ \cosh(t_j) \sum_{n\ne 0} \sqrt{y}\lambda_j(n) K_{iv}(2\pi |n|y)e^{2\pi i n x}.
\ee
We normalized $u_j$ by setting $\lambda_j(1)=1$.

Recall from \eqref{eq:kuznetsovtraceformula} that for such normalized $u_j$'s, the Kuznetsov trace formula states that
\begin{equation}
\sum_{j} \frac{h_T(t_j)}{\|u_j\|^2} \lambda_j(m)\overline{\lambda_j(n)}\ =\ A_{h_T}(m,n) + B_{h_T}(m,n)+ C_{h_T}(m,n),
\end{equation}
where
\begin{eqnarray}\label{eq:ABC}
A_{h_T}(m,n) &\ =\ & -\frac{1}{4\pi} \int_{-\infty}^\infty \overline{\tau(m,r)} \tau(n,r) \frac{h_T(r)}{\cosh( \pi r)}  dr\nonumber\\
B_{h_T}(m,n) &=& \frac{\delta_{n,m}}{\pi^2} \int_{-\infty}^\infty r \tanh(r)h_T(r)dr \nonumber\\
C_{h_T}(m,n) &=&
\frac{2i}{\pi} \sum_{c\ge 1} \frac{S(m,n;c)}{c}
\int_{-\infty}^\infty J_{2ir}\left( \frac{4\pi \sqrt{mn}}{c} \right) \frac{rh_T(r)}{\cosh(\pi r)} dr,
\end{eqnarray}
with $\tau, S, J_{2ir}$ defined in \eqref{eq:kuznetsovtraceformula}.
Since $\lambda_j(1) = 1$, we have the crucial observation that
\begin{equation}
\sum_{j} \frac{h_T(t_j)}{\|u_j\|^2} \lambda_j(m)\ =\ \sum_{j} \frac{h_T(t_j)}{\|u_j\|^2} \lambda_j(m)\overline{\lambda_j(1)},
\end{equation}
and so we can study the last three sums in \eqref{eqn: 1 level average preliminary} via the Kuznetsov formula.
To find these three sums in the cases required by Lemma \ref{lem:1levelexpansion}, we use the Kuznetsov formula in the cases $(m,n)$ equals $(1,1)$, $(p,1)$ and $(p^2,1)$ for the 1-level density, while for the 2-level density we must additionally consider $(p_1,p_2), (p_1^2,p_2)$ and $(p_1^2,p_2^2)$ (where $p_2$ may or may not equal $p_1$). We either use the weight function $h_{T}(t)$ from \eqref{eq:defht1}, which is even and is essentially a Gaussian at the origin, or from \eqref{eq:defht2}, which is even and has two bumps centered around $T$ and $-T$. We also take $L$ such that $\frac{\pi}{2\log T} < (1-\eta)L = o(T^{1/8})$.

\subsection{Approximating Terms in the Kuznetsov Trace Formula}

By approximating $A_{h_T}(m,n), B_{h_T}(m,n), C_{h_T}(m,n)$, we have the following lemma. In addition to determining the scaling for $R$, it allows us to execute the sums over the Fourier coefficients for the $n$-level densities, and is the key ingredient in the analysis. The hardest part of the argument is bounding the contribution from the $C_{h_T}(m,n)$ terms.

\begin{lem}\label{lem: kuznetsovapproximation}
For $m \in \{1,p_1,p_1^2\}$, $n \in \{1,p_2,p_2^2\}$, $h_T$ as in \eqref{eq:defht1} or \eqref{eq:defht2}, and $\frac{\pi}{2\log T} < (1-\eta)L = o(T^{1/8})$,
\begin{eqnarray}
& & \sum_j \frac{h_T(t_j)}{\|u_j\|^2} \lambda_j(m)\overline{\lambda_j(n)} \nonumber\\
& & \twocase{=\ }{\frac{\delta(m,n)T^2}{\pi^2} + O \left( (\log 3mn)^2 T^{7/4}(mn)^{1/4}  \right)}{if $h_T = h_{1,T}$}{\frac{\delta(m,n)LT}{\pi^2}  + O\left( L e^{\pi/2L} \log \left(T^{-1} e^{\pi/L}\right)  (mn)^{1/4} \log(3mn) \right)}{if $h_T = h_{2,T}$.}\nonumber\\
\end{eqnarray}
\end{lem}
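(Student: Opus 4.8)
The identity $\sum_j \frac{h_T(t_j)}{\|u_j\|^2}\lambda_j(m)\overline{\lambda_j(n)} = A_{h_T}(m,n) + B_{h_T}(m,n) + C_{h_T}(m,n)$ is the Kuznetsov formula \eqref{eq:kuznetsovtraceformula}, so the proof is a term-by-term estimate. The $B_{h_T}$ term is the main term: it is $\frac{\delta_{n,m}}{\pi^2}\int_{-\infty}^\infty r\tanh(r)h_T(r)dr$, which by Lemma \ref{lem:findingRprep} equals $\frac{\delta(m,n)T^2}{\pi^2} + O(1)$ for $h_{1,T}$ and $\frac{\delta(m,n)LT}{\pi^2} + O(1)$ for $h_{2,T}$; note here that for $m,n \in \{1,p,p^2\}$ the Kronecker delta $\delta_{n,m}$ is exactly the $\delta(m,n)$ in the statement. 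So all the work is in showing $A_{h_T}(m,n)$ and $C_{h_T}(m,n)$ are absorbed by the stated error terms.

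\textbf{The $A_{h_T}$ (Eisenstein/continuous-spectrum) term.} Here $A_{h_T}(m,n) = -\frac{1}{4\pi}\int_{-\infty}^\infty \overline{\tau(m,r)}\tau(n,r)\frac{h_T(r)}{\cosh(\pi r)}dr$, and from the definition $\tau(m,r) = \pi^{1/2+ir}\Gamma(1/2+ir)^{-1}\zeta(1+2ir)^{-1}m^{-1/2}\sum_{ab=|m|}(a/b)^{ir}$. The divisor sum over $ab=|m|$ has at most $d(m) \ll (mn)^\epsilon$ terms, each of modulus $1$, so $|\tau(m,r)\tau(n,r)| \ll (mn)^{-1/2+\epsilon}|\Gamma(1/2+ir)|^{-2}|\zeta(1+2ir)|^{-2}$. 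Using $|\Gamma(1/2+ir)|^{-2} = \cosh(\pi r)/\pi$ (the reflection formula) the $\cosh(\pi r)$ cancels, and the standard lower bound $|\zeta(1+2ir)|^{-1} \ll \log(2+|r|)$ gives $|A_{h_T}(m,n)| \ll (mn)^{-1/2+\epsilon}\int_{-\infty}^\infty \log^2(2+|r|)\, h_T(r)\, dr$. For $h_{1,T}$ this integral is $\ll T\log^2 T$; for $h_{2,T}$ it is $\ll L\log^2 T$. Since $(mn)^{-1/2+\epsilon}\le 1$, in both cases this is comfortably smaller than the claimed error (which grows like $T^{7/4}$, resp. like $Le^{\pi/2L}\log(\cdots)$, and the latter dominates $L\log^2 T$ by the lower bound $\frac{\pi}{2\log T} < (1-\eta)L$). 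So $A_{h_T}$ contributes negligibly.

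\textbf{The $C_{h_T}$ (Bessel–Kloosterman) term — the main obstacle.} This is $\frac{2i}{\pi}\sum_{c\ge 1}\frac{S(m,n;c)}{c}\int_{-\infty}^\infty J_{2ir}\big(\frac{4\pi\sqrt{mn}}{c}\big)\frac{rh_T(r)}{\cosh(\pi r)}dr$. The plan is: (i) bound the Kloosterman sum by Weil, $|S(m,n;c)| \ll c^{1/2+\epsilon}\gcd(m,n,c)^{1/2}$; (ii) analyze the $r$-integral $I(x) := \int_{-\infty}^\infty J_{2ir}(x)\frac{rh_T(r)}{\cosh(\pi r)}dr$ with $x = 4\pi\sqrt{mn}/c$. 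The key dichotomy is small $x$ (large $c$) versus large $x$ (small $c$). For small $x$, use the power-series definition of $J_{2ir}$ — the leading term is $\sim (x/2)^{2ir}/\Gamma(1+ir)$, whose modulus involves $|\Gamma(1+ir)|^{-1}$, again roughly $\sqrt{\cosh(\pi r)}$, so the $\cosh(\pi r)$ only half-cancels and one is left with an integral $\ll \int |r| h_T(r)(\cosh\pi r)^{-1/2}dr$ that converges and is bounded, times a factor $x^{O(1)}$ that makes the $c$-sum converge. For large $x$ one uses the uniform asymptotics / integral representations of $J_{2ir}(x)$ (or, as the remark after the theorem hints, the fact that for $h_{2,T}$ the eigenvalues are localized at $\pm T$ which forces $r$ large so that $J_{2ir}(x)$ is highly oscillatory — one integrates by parts or uses the exponential decay $e^{-\pi|r|}/\cosh(\pi r)$ together with bounds on $J_{2ir}$ for $|r|$ large relative to $x$). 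Tracking the dependence on $mn$ yields the $(mn)^{1/4}\log(3mn)$ factor, and tracking the dependence on $T$ (resp. $L,T$) — the support condition $\supp(\widehat h)\subset(-1,1)$ makes the effective $c$-range finite of size a power of $e^{\pi/L}$ — yields the $T^{7/4}$, resp. $Le^{\pi/2L}\log(T^{-1}e^{\pi/L})$, factor. The delicate point, and where essentially all the difficulty lies, is making the Bessel-integral estimate \emph{uniform} in $x$ across the transition region $x\asymp |r|$, since there neither the power series nor the standard asymptotic expansion is individually adequate; this forces one to use careful estimates for $J_{2ir}$ of the kind found in the literature on Kloosterman sums and Bessel functions, and to feed back the gained decay against the Weil bound in the $c$-sum. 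Once $A_{h_T}$ and $C_{h_T}$ are shown to be within the stated error, adding the three pieces completes the proof.
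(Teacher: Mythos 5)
Your treatment of the $A_{h_T}$ and $B_{h_T}$ terms matches the paper's (reflection formula for $\Gamma$, the bound $|\zeta(1+2ir)|^{-1}\ll\log(2+|r|)$, and Lemma \ref{lem:findingRprep} for the main term), and that part is fine. The gap is in the Bessel--Kloosterman term $C_{h_T}$, and it is a genuine one. Your small-$x$/large-$c$ analysis rests on the claim that the leading term $(x/2)^{2ir}/\Gamma(1+2ir)$ of the power series supplies ``a factor $x^{O(1)}$ that makes the $c$-sum converge.'' For real $r$ we have $|(x/2)^{2ir}|=1$, so the modulus of the leading term carries \emph{no} power of $x$ (equivalently no power of $1/c$) whatsoever, and the Weil bound alone gives $\sum_c c^{-1/2+\epsilon}$, which diverges. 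The convergence has to come from oscillation: $(x/2)^{2ir}=e^{2ir\log(x/2)}$ integrated against the localized weight turns the $r$-integral into (up to controlled factors) $LT^{1/2}\,\widehat{h}\bigl(\tfrac{L}{\pi}\log(cT/\pi\sqrt{mn})\bigr)$, and it is the hypothesis $\supp(\widehat{h})\subset(-1,1)$ that kills every $c>\pi\sqrt{mn}\,T^{-1}e^{\pi/L}$ and leaves a finite sum; the surviving error terms at large $c$ then require genuine polynomial decay in $c$, which the paper manufactures by shifting the $r$-contour to ${\rm Im}(r)=-3/8$ so that $x^{2ir}$ acquires a real factor $(2\pi\sqrt{mn}/c)^{3/4}$. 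You mention the support condition in passing at the end, but the mechanism you actually argue with is the nonexistent $x^{O(1)}$ saving. (A smaller quantitative slip: $|\Gamma(1+2ir)|^{-1}\asymp e^{\pi|r|}|r|^{-1/2}$, so the $\cosh(\pi r)$ cancels \emph{fully}, not by half, and the residual integral is $\asymp\int|r|^{1/2}h_T(r)\,dr\asymp LT^{1/2}$, not $O(1)$ --- consistent with the $LT^{1/2}$ prefactor in the paper's bound but not with your claim that the integral is bounded.)

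The second genuine problem is $h_{1,T}$. Your sketch treats both weight functions uniformly, but for the Gaussian weight $\widehat{h_{1,T}}$ is not compactly supported, so the truncation of the $c$-sum is unavailable, and since $h_{1,T}(r)\approx 1$ for $|r|\le T$ there is no localization to exploit either; the paper states explicitly that the direct analysis fails here and instead imports the bound $C_{h_T}(m,n)=O\bigl((\log 3mn)^2T^{7/4}(mn)^{1/4}\bigr)$ from equation (16.56) of Iwaniec--Kowalski. Nothing in your proposal produces the exponent $7/4$, and the route you describe cannot produce it. To repair the proof you need (i) the Fourier-transform/compact-support argument (plus the contour shift for large $c$) for $h_{2,T}$, and (ii) a citation or an independent argument for $h_{1,T}$.
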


\begin{proof}
We first consider $A_{h_T}(m,n)$. Note that we have the following equality for $\tau$:
\begin{eqnarray}
\overline{\tau(m,r)}\tau(n,r) &= & \pi\Gamma(1/2+ir)^{-1}\Gamma(1/2-ir)^{-1}\zeta(1+2ir)^{-1}\zeta(1-2ir)^{-1}(mn)^{-1/2} \nonumber\\ & & \ \
\sum_{ ab =|m| } \left( \frac{a}{b}\right)^{-ir} \sum_{ ab =|n| } \left( \frac{a}{b}\right)^{ir}
\nonumber\\
&=& \ \pi\frac{\sin(\pi(\frac{1}{2}+ir))}{\pi|\zeta(1+2ir)|^2 (mn)^{1/2}}
\sum_{ ab =|m| } \left( \frac{a}{b}\right)^{-ir} \sum_{ ab =|n| } \left( \frac{a}{b}\right)^{ir} \nonumber\\
&=&\ \frac{\cosh(\pi r)}{|\zeta(1+2ir)|^2 (mn)^{1/2}}
\sum_{ ab =|m| } \left( \frac{a}{b}\right)^{-ir} \sum_{ ab =|n| } \left( \frac{a}{b}\right)^{ir},
\end{eqnarray}
where we used the functional equation for $\Gamma$ in the second equality.

Because of our choices of $m$ and $n$, the sums over the divisors are bounded by 18.
Furthermore, $|\zeta(1+2ir)| \gg 1/\log(2+|r|)$ (see for example \cite{Liu}).
If $h_T = h_{2,T}$, then for any $\epsilon > 0$ we have
\begin{eqnarray}
A_{h_{2,T}}(m,n) &=& \frac{1}{4\pi}\int_{-\infty}^\infty \overline{\tau(m,r)} \tau(n,r) \frac{h_{2,T}(r)}{\cosh( \pi r)} dr\nonumber\\
&\ =\ & O\left(  \int_{0}^\infty\frac{\frac12 h\left(\frac{r-T}{L}\right) + \frac12 h\left(\frac{r+T}{L}\right)}{|\zeta(1+2ir)|^2 (mn)^{1/2} } dr \right)\nonumber\\
&=& O\left( \frac{1}{(mn)^{1/2}}\int_{0}^{\infty}\log(2+r) \cdot h\left(\frac{r-T}{L}\right) dr\right)\nonumber \\
& =& O\left( \frac{1}{(mn)^{1/2}}\int_{0}^{\infty}r^{\epsilon}h\left(\frac{r-T}{L}\right) dr\right)\nonumber \\
& =& O\left( \frac{L^\epsilon}{(mn)^{1/2}}\int_{0}^{\infty} \left|\frac{r-T}{L} + \frac{T}{L}\right|^\epsilon h\left(\frac{r-T}{L}\right) dr\right)\nonumber \\
&=& O\left( \frac{L^\epsilon}{(mn)^{1/2}}\int_{-T/L}^{\infty} \left[|u|^\epsilon + (T/L)^\epsilon\right] h(u) Ldu\right)\nonumber \\
&=& O\left(\frac{L^{1+\epsilon} + L T^{\epsilon}}{(mn)^{1/2}}\right) \ = \ O\left(\frac{L T^{\epsilon}}{(mn)^{1/2}}\right),
\end{eqnarray} where the rapid decay of $h$ implies $\int |u| h(u)du = O(1)$. A similar calculation shows \be A_{h_{2,T}}(m,n) \ = \  O\left(\frac{T^{1+\epsilon}}{(mn)^{1/2}}\right).\ee

For $B_{h_T}(m,n)$, in the proof of Lemma \ref{lem:findingRprep} we showed in \eqref{eqn: B} that \be\label{eqn:BB} \twocase{B_{h_T}(m,n) \ = \ \frac1{\pi^2}\int_{-\infty}^\infty r \tanh(r)h_T(r)dr \ = \ }{\frac{T^2 + O(1)}{\pi^2}}{if $h_T = h_{1,T}$}{\frac{LT+O(1)}{\pi^2}}{if $h_T = h_{2,T}$.} \ee

Finally, we consider the Bessel-Kloosterman piece $C_{h_T}(m,n)$, which is the most delicate part of the analysis and the heart of the proof. We first consider $h_{2,T}$, and argue as in \cite{Sar}. As this is a lower order term, there is no need to compute optimal constants or expansions. We first consider the integral term in \eqref{eq:ABC}. As the two summands from $h_{2,T}$ are handled similarly, we only consider the $h((r-T)/L)$ part. We are left with studying \bea I_c(L,T;m,n) & \ := \ & \int_{-\infty}^\infty J_{2ir}\left( \frac{4\pi \sqrt{mn}}{c} \right) \frac{rh\left(\frac{r-T}{L}\right)}{\cosh(\pi r)} dr \nonumber\\ & = & \int_{-\infty}^\infty J_{2i(Lr+T)}\left( \frac{4\pi \sqrt{mn}}{c} \right) \frac{(Lr+T)h\left(r\right)}{\cosh(\pi (Lr+T))} Ldr. \eea We have (see equation (9.1.10) of \cite{AS}) \be\label{eq:besselexpansionfromAS} J_{2ir}(z) \ = \ \frac{(z/2)^{2ir}}{\Gamma(1+2ir)} + \sum_{k=1}^\infty \frac{(z/2)^{2ir} (-z/4)^{2k}}{k!\Gamma(1+k+2ir)},\ee and by Stirling's formula \be \Gamma(z+1) \ \sim \ (z/e)^z z^{1/2} \sqrt{2\pi} \ee
(as we only care about the main term, this asymptotic expansion suffices, and simplifies some of the algebra). \\

We briefly summarize the proof. We insert our approximations into the integral, calculating the main and error terms. We eventually end up with the main term as a Fourier transform of $h$ evaluated at $\frac{L}{\pi} \log\left(\frac{cT}{\pi \sqrt{mn}}\right)$; as $\supp(\widehat{h} \subset (-1,1)$, this restricts which $c$ can contribute, and leads to a finite contribution from the resulting Kloosterman sum. Unfortunately, we need to be very careful in dealing with the error terms. The reason is that it is not enough to obtain that $I_c(L,T;m,n)$ is small relative to $T$; this is easy. The problem is we must have sufficient decay in $c$ so that the resulting $c$-sum converges (and we gain a factor of $c^{1/2+\epsilon}$ from using Weil's estimate for the Kloosterman sum). We accomplish this for the main term by interpreting the integral as evaluating the Fourier transform of $h$ outside $(-1,1)$ unless $c$ is small. \emph{In the analysis below, we ignore all error terms in order to highlight the argument. We finish the proof by showing how they can be handled in Appendix \ref{sec:boundingerrorsBK}, with their contribution bounded by the error from the main term if $c$ is small, or by a very small (in both $c$ and $T$) error if $c$ is large.}\\

A standard analysis shows that it suffices to keep just main term in the Bessel expansion (we can just repeat the following for each term and then observe the resulting sum is dominated by the first term), and the rapid decay in $h$ allows us to truncate the integral to $-\sqrt[4]{T}$ to $\sqrt[4]{T}$, and thus \bea\label{eq:ILTmnstart} I_c(L,T;m,n) & \ \sim \ & \int_{-\sqrt[4]{T}}^{\sqrt[4]{T}} \frac{(2\pi\sqrt{mn}/c)^{2i(Lr+T)}}{(2i(Lr+T)/e)^{2i(Lr+t)} (2i(Lr+T))^{1/2} \sqrt{2\pi}} \frac{2(Lr+T) h(r)Ldr}{e^{\pi(Lr+T)} + e^{-\pi(Lr+T)}}  \nonumber\\ &=& \sqrt{\frac1{\pi i}} \int_{-\sqrt[4]{T}}^{\sqrt[4]{T}} \left(\frac{\pi e\sqrt{mn}}{c(Lr+T)}\right)^{2i(Lr+T)} \frac{\sqrt{Lr+T}}{e^{-\pi(Lr+T)}} \frac{h(r) Ldr}{e^{\pi(Lr+T)} +
e^{-\pi(Lr+T)}} \nonumber\\ &=& \sqrt{\frac1{\pi i}} \int_{T-L\sqrt[4]{T}}^{T+L\sqrt[4]{T}}
\left(\frac{\pi e\sqrt{mn}}{cu}\right)^{2iu} u^{1/2} h\left(\frac{u-T}{L}\right) \frac{Ldu}{L}. \eea For $u \in [T-L\sqrt[4]{T}, T + L\sqrt[4]{T}]$, $|u-T| \le LT^{1/4}$, and thus we have \be\label{eq:expansionsqrtu} u^{1/2} \ = \ (T + u-T)^{1/2} \ = \ T^{1/2} \left(1 + \frac{u-T}{T}\right)^{1/2} \ = \ T^{1/2} + O\left(\frac{L}{T^{1/4}}\right) \ee and \bea\label{eq:expansionuu} u^{-2iu} & \ = \ & e^{-2i u \log(T+u-T)} \ = \ e^{-2iu \log T - 2iu \log\left(1 + \frac{u-T}{T}\right)}\nonumber\\ & \ = \ & e^{-2iu\log T - 2iu\frac{u-T}{T} + O(u(u-T)^2/T^2)} \nonumber\\ &=& e^{-2iu\log T - 2i(T+u-T)\frac{u-T}{T} + O(L^2/T^{1/2})} \nonumber\\ &=& e^{-2iu\log T -2iu+2iT + O(L^2/T^{1/4})} \nonumber\\ &=& e^{-2iu \log(eT)} e^{2iT} \left(1 + O\left(\frac{L^2}{T^{1/4}}\right)\right).\eea Substituting \eqref{eq:expansionsqrtu} and \eqref{eq:expansionuu} into our asymptotic \eqref{eq:ILTmnstart}, and ignoring for now the error terms, yields \bea I_c(L,T;m,n) & \ \sim \ & \frac{T^{1/2}e^{-2iT}}{\sqrt{\pi i}} \int_{T-L\sqrt[4]{T}}^{T+L\sqrt[4]{T}} h\left(\frac{u}{L}-\frac{T}{L}\right) e^{-2\pi iu \frac{1}{\pi}\log\left(\frac{cT}{\pi\sqrt{mn}}\right)} du \nonumber\\ &\sim & \frac{T^{1/2}e^{-2iT}}{\sqrt{\pi i}} \int_{-\infty}^\infty h\left(\frac{u}{L}-\frac{T}{L}\right) e^{-2\pi iu \frac{1}{\pi}\log\left(\frac{cT}{\pi\sqrt{mn}}\right)} du \nonumber\\ & = & \frac{T^{1/2}e^{-2iT} e^{-2\pi i \frac{T}{\pi} \log\left(\frac{cT}{\pi\sqrt{mn}}\right)}}{\sqrt{\pi i}} L \widehat{h}\left(\frac{L}{\pi} \log\left(\frac{cT}{\pi\sqrt{mn}}\right)\right) \eea by the Fourier Transform identity \be \int_{-\infty}^\infty h(au+b) e^{-2\pi i uy} du \ = \ e^{2\pi i \frac{b}{a}y} \frac1{|a|} \widehat{h}\left(\frac{y}{a}\right).\ee We have therefore shown that \be |I_c(L,T;m,n)| \ \ll \ LT^{1/2} \left|\widehat{h}\left(\frac{L}{\pi} \log\left(\frac{cT}{\pi\sqrt{mn}}\right)\right)\right|.\ee As $\widehat{h}(x)$ vanishes if $|x| > 1$, the only $c$ that contribute are those with $\frac{L}{\pi} \log\left(\frac{cT}{\pi\sqrt{mn}}\right) \le 1$, or $c \le \pi \sqrt{mn} T^{-1}e^{\pi/L}$. This gives us the main term of the contribution from the Bessel term; we show the errors introduced by our approximations are negligible in Appendix \ref{sec:boundingerrorsBK}.

We substitute this bound and $c$-restriction into the expansion for $C_{h_T}(m,n)$ and use Weil's bound for the Kloosterman sum ($S(m,n;c) \ll \gcd(m,n,c)^{1/2}  \tau(c) c^{1/2}$, where $\tau(c) \ll c^\epsilon$ is the divisor function). For the 1-level sums, $n=1$ and writing $p$ for $p_1$ we have $m \in \{1,p,p^2\}$, while for the 2-level sums $m \in \{1,p_1,p_1^2\}$ and $n \in \{1,p_2,p_2^2\}$. We first handle the case when $n=1$, which includes all the sums that would arise in the 1-level density and some that occur in the 2-level; note this implies $\gcd(m,n,c)=1$. Using standard bounds for sums of the divisor function\footnote{We use $\sum_{c\le x} \tau(c)/c^{1/2} \ll x^{1/2} \log x$, which follows from partial summation and standard bounds for sums of divisor functions; see Chapter 1 of \cite{IK}.} yields\bea\label{eq:approxChTneeded} C_{h_T}(m,n) &\ = \ & \frac{2i}{\pi} \sum_{c\ge 1} \frac{S(m,n;c)}{c} \int_{-\infty}^\infty J_{2ir}\left( \frac{4\pi \sqrt{mn}}{c} \right) \frac{rh_T(r)}{\cosh(\pi r)} dr \nonumber\\ &\ll& \sum_{c \ge 1} \frac{\tau(c) \gcd(m,n,c)^{1/2}}{c^{1/2}} LT^{1/2} \left|\widehat{h}\left(\frac{L}{\pi} \log\left(\frac{cT}{\pi\sqrt{mn}}\right)\right)\right| \nonumber\\ &\ll & LT^{1/2} \sum_{c = 1}^{\pi \sqrt{mn} T^{-1}e^{\pi/L}} \frac{\tau(c)}{c^{1/2}}   \nonumber\\ &\ll& LT^{1/2} \left(\pi \sqrt{mn} T^{-1}e^{\pi/L}\right)^{1/2} \log\left(\pi \sqrt{mn} T^{-1}e^{\pi/L}\right) \nonumber\\ & \ll & L e^{\pi/2L} \log \left(T^{-1} e^{\pi/L}\right)  (mn)^{1/4}  \log(3mn).  \eea We want this to be smaller than the main term, which is $LT$ and arises from the hyperbolic tangent integral; thus $L$ cannot be too small. A close to optimal choice is to take $\frac{\pi}{2\log T} < (1-\eta)L$, so $e^{\pi/2L} < T^{1-\eta}$. 

We now consider the case when $\gcd(m,n) = p$. There are two sub-cases: $p|c$ and $p\notdiv c$. The contribution from the terms where $p\notdiv c$ is identical to the previous argument, as in that case $\gcd(m,n,c)=1$. We now handle the terms where $p|c$. We re-write $c$ as $pc'$, and note $\gcd(m,n,c)^{1/2}/c^{1/2}  = 1/c'^{1/2}$, and $\tau(pc') \ll \tau(c')$. We thus have the same sum as before, except now $c'$ only goes up to $1/p$ what $c$ did, and thus the error term here is subsumed in the previous error term. The argument is similar if $\gcd(m,n)=p^2$, and we again obtain the same error term. This completes the analysis of $C_{h_T}$ when $h=h_{2,T}$.



Unfortunately, a similar calculation fails for $h_T = h_{1,T}$. The difficulty is that the resulting term is now \be 2 {\rm Im} \int_0^\infty J_{2ir}\left(\frac{4\pi\sqrt{mn}}{c}\right) \frac{r h(r/T)}{\cosh(\pi r)}. \ee The problem is that for $T$ large, $h$ is essentially 1 in the integrand, and we lose the $T$-decay. Trying to use integral formulations of the Bessel and Gamma functions do not lead to tractable sums for bounding purposes. We thus resort to a different argument. We instead use equation (16.56) of \cite{IK}, which gives \be
C_{h_T}(m,n)\ =\ O\left((\log 3mn)^2 T^{7/4}(mn)^{1/4}\right).
\ee

By combining the terms $A_{h_T}(m,n)$, $B_{h_T}(m,n)$, $C_{h_T}(m,n)$ and noting that $A_{h_T}(m,n)$ $\ll$ $C_{h_T}(m,n)$, we obtain the desired result.
\end{proof}

\section{1-level Density}\label{sec:1level}

To determine the weighted 1-level density, we need to evaluate the last two sums in \eqref{eqn: 1 level average preliminary}, which we do in the following lemma.

\begin{lem}\label{lem:twosumsfor1leveldensity} Let $h_T$ be as in \eqref{eq:defht2}, $R = T^2$ and $T/L \gg \log T$.
Then
\begin{eqnarray}\label{eqn: twosums}
\frac{1}{\sum_j \frac{h_T(t_j)}{\|u_j\|^2}} \sum_p \frac{2\log p}{p^{1/2} \log R} \widehat{\phi}\left(\frac{\log p}{\log R}\right)
\sum_j \frac{h_T(t_j)}{\|u_j\|^2} \lambda_j(p)
&=& o(1)\nonumber\\\nonumber\\
\frac{1}{\sum_j \frac{h_T(t_j)}{\|u_j\|^2}} \sum_p \frac{2\log p}{p \log R} \widehat{\phi}\left(\frac{2\log p}{\log R}\right)
\sum_j \frac{h_T(t_j)}{\|u_j\|^2} \lambda_j(p^2)
&=& o(1).
\end{eqnarray}
\end{lem}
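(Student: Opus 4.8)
The plan is to apply the Kuznetsov trace formula in the form proved in Lemma \ref{lem: kuznetsovapproximation} to each inner sum $\sum_j \frac{h_T(t_j)}{\|u_j\|^2}\lambda_j(p)$ and $\sum_j \frac{h_T(t_j)}{\|u_j\|^2}\lambda_j(p^2)$, writing $\lambda_j(p) = \lambda_j(p)\overline{\lambda_j(1)}$ and $\lambda_j(p^2) = \lambda_j(p^2)\overline{\lambda_j(1)}$ so that the cases $(m,n) = (p,1)$ and $(m,n)=(p^2,1)$ of the lemma apply. Since $p \neq 1$ we have $\delta(m,n) = 0$ (no diagonal contribution), so for $h_T = h_{2,T}$ Lemma \ref{lem: kuznetsovapproximation} gives
\begin{equation}
\sum_j \frac{h_T(t_j)}{\|u_j\|^2}\lambda_j(p^a) \ \ll \ L e^{\pi/2L} \log\left(T^{-1}e^{\pi/L}\right) p^{a/4}\log(3p^a)
\end{equation}
for $a \in \{1,2\}$. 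First I would normalize by the sum of weights: by Lemma \ref{lem:findingRprep} (equation \eqref{eqn: B}) together with the bounds on $A_{h_T}$ and $C_{h_T}$ established in Lemma \ref{lem: kuznetsovapproximation} applied with $m=n=1$, we have $\sum_j \frac{h_T(t_j)}{\|u_j\|^2} = \frac{LT}{\pi^2} + O\big(L e^{\pi/2L}\log(T^{-1}e^{\pi/L})\big)$, and under the hypothesis $T/L \gg \log T$ (equivalently $e^{\pi/L}$ is at most a small power of $T$, since $L \gg 1/\log T$), the error is $o(LT)$, so the denominator is $\asymp LT$.

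Next I would bound the prime sums. Because $\widehat{\phi}$ has compact support in $(-\sigma,\sigma)$ and $\log R = 2\log T$, the sum over $p$ in the first line is restricted to $p \le R^{\sigma} = T^{2\sigma}$, and in the second line to $p \le R^{\sigma/2} = T^{\sigma}$. Inserting the Kuznetsov bound, the first expression is
\begin{equation}
\ll \ \frac{1}{LT}\sum_{p \le T^{2\sigma}} \frac{\log p}{p^{1/2}\log T}\cdot L e^{\pi/2L}\log\left(T^{-1}e^{\pi/L}\right) p^{1/4}\log(3p) \ \ll \ \frac{e^{\pi/2L}\log\left(T^{-1}e^{\pi/L}\right)}{T\log T}\sum_{p\le T^{2\sigma}} p^{-1/4}\log^2 p,
\end{equation}
and bounding the $p$-sum crudely by $\ll T^{2\sigma(3/4)}\log^2 T = T^{3\sigma/2}\log^2 T$ (via $\sum_{p\le x} p^{-1/4}\log^2 p \ll x^{3/4}\log^2 x$ from partial summation) gives a total bound $\ll e^{\pi/2L}\log(T^{-1}e^{\pi/L})\, T^{3\sigma/2 - 1}\log T$. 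Using $(1-\eta)L > \frac{\pi}{2\log T}$, i.e. $e^{\pi/2L} < T^{1-\eta}$, and absorbing the logarithmic factors, this is $O(T^{\sigma(3/2+\epsilon) - \eta})$, which is $o(1)$ once $\sigma < \frac{2}{3}\eta$. The second expression is handled identically — in fact it is smaller, since the $p^{a/4}$ factor with $a=2$ gives $p^{1/2}$ but the shorter range $p \le T^{\sigma}$ and the extra $1/p^{1/2}$ damping (the weight is $\log p/(p\log R)$, not $\log p/(p^{1/2}\log R)$) more than compensate; concretely the $p$-sum becomes $\sum_{p\le T^{\sigma}} p^{-1/2}\log^2 p \ll T^{\sigma/2}\log^2 T$, giving again $O(T^{\sigma(\cdots)-\eta}) = o(1)$ in the same range.

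The main obstacle here is purely bookkeeping: one must track the $e^{\pi/2L}$ factor carefully against the main term $LT$ and confirm that the hypothesis $\frac{\pi}{2\log T} < (1-\eta)L = o(T^{1/8})$ (restated in this lemma as $T/L \gg \log T$ for the denominator estimate, and implicitly the full constraint for the numerator) is exactly what forces the exponents to be negative for $\sigma$ in the claimed range. There is no delicate analytic input beyond what Lemma \ref{lem: kuznetsovapproximation} already provides; the only subtlety is making sure that in the case $a=2$ the term $\lambda_j(p^2)$, not $\lambda_j(p)^2$, appears, so that the Kuznetsov formula applies directly with $(m,n) = (p^2,1)$ and no Hecke-relation manipulation is needed. (For completeness one would also record the analogous argument for $h_T = h_{1,T}$, where Lemma \ref{lem: kuznetsovapproximation} gives the weaker bound $\ll (\log 3p^a)^2 T^{7/4} p^{a/4}$ against a main term $T^2$, yielding $o(1)$ for $\sigma$ in the correspondingly smaller range $\sigma < 1/6$; but as stated the present lemma only asserts the $h_{2,T}$ case.)
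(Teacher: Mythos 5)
Your proposal is correct and follows essentially the same route as the paper: apply Lemma \ref{lem: kuznetsovapproximation} with $(m,n)=(1,1)$ to evaluate the normalizing weight sum as $LT/\pi^2 + o(LT)$, then with $(m,n)=(p,1)$ and $(p^2,1)$ (no diagonal term), restrict the prime sums to $p\le T^{2\sigma}$ and $p\le T^{\sigma}$ via the support of $\widehat{\phi}$, and use $e^{\pi/2L}<T^{1-\eta}$ to obtain the bounds $O(T^{2\sigma(3/4+\epsilon)-\eta})$ and $O(T^{\sigma(1/2+\epsilon)-\eta})$. Your closing remark about the $h_{1,T}$ case (replacing $Le^{\pi/2L}\log T$ by $T^{7/4}$ against a main term $T^2$, giving $\sigma<1/6$) is exactly the modification the paper records at the end of its proof.
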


\begin{proof}
We first determine $\sum_j \frac{h_T(t_j)}{\|u_j\|^2}$, which is used to normalize the weights in \eqref{eqn: twosums} by having them sum to 1. As $\lambda_j(1) =1$, we have by
Lemma \ref{lem: kuznetsovapproximation} with $m = n =1$ that
\begin{eqnarray}
\sum_j  \frac{ h_T(t_j )}{\|u_j \|^2} & \ = \ & \sum_j  \frac{h_T(t_j ) }{ \|u_j \|^2} \lambda_j(1)\overline{\lambda_j(1)} \nonumber\\ & = &  \twocase{}{T^2/\pi^2+O(T^{7/4})}{if $h_T = h_{1,T}$}{LT/\pi^2 + O(L e^{\pi/2L} \log \left(T^{-1} e^{\pi/L}\right)}{if $h_T = h_{2,T}$.}
\end{eqnarray}
Therefore the reciprocal of this sum  is
\begin{eqnarray}
\twocase{\frac{1}{\sum_j  \frac{h_T(t_j )}{ \|u_j \|^2}} \ = \ }{\pi^2/T^2 + O\left(1/T^{9/4}\right)}{if $h_T=h_{1,T}$}{\pi^2/LT + O\left((LT)^{-1}T^{-1} e^{\pi/2L} \log \left(T^{-1} e^{\pi/L}\right)\right)}{if $h_T=h_{2,T}$.}
\end{eqnarray}

We first do the case when $h_T = h_{2,T}$ and then discuss the minimal changes needed if $h_T = h_{1,T}$.

Now we consider the first sum in \eqref{eqn: twosums}. By Lemma \ref{lem: kuznetsovapproximation} with $m = p, n =1$, we have
\begin{eqnarray}
\sum_j  \frac{h_T(t_j)}{\|u_j \|^2} \lambda_j(p) \ = \ \sum_j \frac{h_T(t_j)}{\|u_j \|^2} \lambda_j(p) \overline{\lambda_j(1)}\ = \  O(L e^{\pi/2L}\log T \cdot p^{1/4+\epsilon}).
\end{eqnarray}
As $R = T^2$, $\frac{\pi}{2\log T} < (1-\eta)L = o(T^{1/8})$ and ${\rm supp}(\widehat{\phi}) \subset (-\sigma, \sigma)$, the prime sum in \eqref{eqn: twosums} is over $p\le T^{2\sigma}$ and
\begin{eqnarray}
&& \frac{1}{\sum_j \frac{h_T(t_j)}{\|u_j \|^2}} \sum_{p\le T^{2\sigma}} \frac{2\log p}{p^{1/2} \log R} \widehat{\phi}\left(\frac{\log p}{\log R}\right)
\sum_j \frac{h_T(t_j)}{||u_j||^2} \lambda_j(p)\nonumber\\
&&\ \ \ \ =\ O\left(\frac{1}{LT}\right) \sum_{p\le T^{2\sigma}} \frac{2\log p}{p^{1/2} \log R} \left|\widehat{\phi}\left(\frac{\log p}{\log R}\right)\right|
O\left(LT^{1-\eta}\log T \cdot  p^{1/4+\epsilon} \right)\nonumber\\
&& \ \ \ \ =\ O \left(\frac{\log T}{T^{\eta}}\sum_{p\le T^{2\sigma}} p^{-1/4+\epsilon}\right) \nonumber\\
&& \ \ \ \ = \ O(T^{2\sigma(3/4 + \epsilon) -\eta} \log T).
\end{eqnarray} Note this sum is negligible so long as $\sigma < \frac23 \eta$ (and $\eta < 1$), so by taking $\eta$ arbitrarily close to $1$ we can get $\sigma$ arbitrarily close to $\frac23$. We could also obtain such support by taking $L$ even larger; for example, if $L \ge \frac{\pi \log\log T}{2\log T}$ then $e^{pi/2L} \le T^{1/\log\log T} \ll T^\epsilon$ for any $\epsilon > 0$.

The second sum in \eqref{eqn: twosums} may be handled similarly.
Using  $m=p^2 , n =1$ in Lemma \ref{lem: kuznetsovapproximation}, for any $\epsilon >0$ we have
\begin{eqnarray}
\sum_j  \frac{h_T(t_j)}{\|u_j \|^2}\lambda_j(p^2)  = \sum_j  \frac{h_T(t_j)}{\|u_j \|^2}\lambda_j(p^2)\overline{\lambda_j(1)} =  O(L e^{\pi/2L}\log T \cdot p^{1/4+\epsilon} \cdot p^{1/2+\epsilon}).
\end{eqnarray} As $R = T^2$ and ${\rm supp}(\widehat{\phi}) \subset (-\sigma, \sigma)$, this time the prime sum is over $p \le T$ and we have
\begin{eqnarray}
&& \frac{1}{\sum_j \frac{h_T(t_j)}{\|u_j \|^2}} \sum_p \frac{2\log p}{p \log R} \widehat{\phi}\left(\frac{2\log p}{\log R}\right)
\sum_j \frac{h_T(t_j)}{\|u_j \|^2} \lambda_j(p^2) \nonumber\\
&&\ \ \ \ =\ O\left(\frac{1}{LT}\right)
\sum_{p\le T^{\sigma}} \frac{2\log p}{p \log R} \widehat{\phi}\left(\frac{2\log p}{\log R}\right)
O\left(L e^{\pi/2L}\log T \cdot p^{1/2+\epsilon}\right) \nonumber\\
&&\ \ \ \ =\ O\left ( T^{-\eta} \log T \sum_{p\le T^{\sigma}} p^{-1/2+\epsilon} \right)\nonumber\\
&&\ \ \ \ =\  O\left(T^{\sigma(1/2+\epsilon) - \eta}\right),
\end{eqnarray} and thus this term is negligible for $\sigma < \frac{\eta}{1/2+\epsilon}$. Not surprisingly, the support restriction is weaker here than in the previous sum (we sum over fewer primes and divide by a higher power of $p$); once $\eta > 1/2$ this term does not contribute for support less than 1.

If now $h_T = h_{1,T}$, then the arguments above are trivially changed; we simply need to replace $L e^{\pi/2L} \log T$ with $T^{7/4}$, and the size of the family is now of the order $T^2$ instead of $LT$. The net effect is to replace $T^{-\eta}$ with $T^{-1/4}$ (up to a factor of $\log T$, which is immaterial). The main error term is now $O(T^{2\sigma(3/4+\epsilon)-1/4})$, which is negligible if $\sigma < 1/6$.\end{proof}

We can now prove Theorem \ref{thm:levelone} and determine the 1-level density.

\begin{proof}[Proof of Theorem \ref{thm:levelone}]
The proof follows immediately by substituting the results of Lemma \ref{lem:twosumsfor1leveldensity} into the 1-level density expansion of Lemma \ref{lem:1levelexpansion}.
\end{proof}

\section{2-Level Density}\label{sec:2level}

Miller \cite{Mil0,Mil1} noticed that while the 1-level density is unable to distinguish the three orthogonal groups if the Fourier transform of the test function is supported in $(-1,1)$ (though it can distinguish these from unitary and symplectic), the 2-level density is different for each of the classical compact groups for arbitrarily small support. The difference between the three orthogonal groups is entirely due to the distribution of signs of the functional equations. Thus, in order to determine which orthogonal group corresponds to our family, we now compute the 2-level density. As our goal is simply to uniquely identify which orthogonal group can correspond to our family, to simplify the exposition we simply concentrate on obtaining a small window of support about zero; with a very small amount of additional work one could obtain explicit bounds on the support.

The weighted 2-level density is
\begin{equation}
\mathcal{D}_{2}(\phi, h_T)\ :=\
\frac{1}{\sum_{j} \frac{h_T(t_j)}{\| u_j \|^2}}
\sum_{j} \frac{h_T(t_j)}{\| u_j \|^2}
 \sum_{j_1 \ne \pm j_2}
\phi_1 \left(\frac{\log R}{2\pi}\gamma_{j_1} \right)
\phi_2 \left(\frac{\log R}{2\pi}\gamma_{j_2} \right),
\end{equation}
where $\phi(x,y) = \phi_1(x) \phi_2(y)$ and $\widehat{\phi_1}, \widehat{\phi_2}$ are both supported in $(-\sigma, \sigma)$. The analysis is simplified by adding back the $j_1 = \pm j_2$ terms and then subtracting these off. This allows us to use the explicit formula twice for the sum over $j_1$ and $j_2$, while for the subtracted off term (from $j_1 = \pm j_2$), we essentially have a 1-level density. The only problem is that if the functional equation is odd, then there should only be one $j_2$ corresponding to $j_1=0$ and not 2, and thus we need to add back this contribution. Defining \be
\mathcal{N}(-1)\ :=\ \frac{1}{\sum_{j} \frac{h_T(t_j)}{\| u_j \|^2}}
 \sum_{j, \epsilon_j = -1} \frac{h_T(t_j)}{\| u_j \|^2}
\ee (the weighted percentage of Maass forms in our family with odd functional equation), we consider the modified version of the 2-level density (where we allow the $j_1 = \pm j_2$ terms)
\begin{eqnarray}
\mathcal{D}^*_{2}(\phi,h_T) &\ := \ &   \frac{1}{\sum_{j} \frac{h_T(t_j)}{\| u_j \|^2}}
 \sum_{j} \frac{h_T(t_j)}{\| u_j \|^2}   D_1(u_j,\phi_1) \overline{D_1(u_j,\phi_2)},\label{eqn: D*}
\end{eqnarray}
where
\begin{eqnarray}
D_1(u_j,\phi_i) &\ = \ &
\frac{\phi_i(0)}{2}+
\widehat{\phi_i}(0) \frac{\log(1 + t_j^2)}{\log R } + O\left(\frac{\log\log R}{\log R}\right) \nonumber\\
&&\ - \ \sum_p \frac{2\lambda_j(p)\log p}{p^{1/2}\log R}\widehat{\phi}_i \left(\frac{\log p}{\log R}\right) -
\sum_p\frac{2\lambda_j(p^2)\log p}{p\log R}\widehat{\phi}_i \left(\frac{2\log p}{\log R} \right) \nonumber\\ &=& \frac{\phi_i(0)}{2}+
\widehat{\phi_i}(0) \frac{\log(1 + t_j^2)}{\log R } + O\left(\frac{\log\log R}{\log R}\right)\nonumber\\ & &\ -\ S_1(u_j,\phi_i) - S_2(u_j,\phi_i)
\end{eqnarray}
with \begin{eqnarray}
S_{1}(u_j,\phi_i) &\ := \ & \sum_p \frac{2\lambda_j(p)\log p}{p^{1/2}\log R}\widehat{\phi}_i \left(\frac{\log p}{\log R}\right)\nonumber\\
S_{2}(u_j,\phi_i) &\ := \ & \sum_p\frac{2\lambda_j(p^2)\log p}{p\log R}\widehat{\phi}_i \left(\frac{2\log p}{\log R} \right).
\end{eqnarray}
Note that in \eqref{eqn: D*}, we can have the complex conjugate of $D_1(u_j,\phi_2)$  since $\phi$ is real. We do this so that we can apply
the Kuznetsov formula, which has a complex conjugate over one of the $\lambda_j$'s.

We can obtain the 2-level density from the modified 2-level density by subtracting off the contribution from $j_1 = \pm j_2$, which is
\begin{eqnarray}
\mathcal{D}_{2,\pm}(\phi,h_T) &\ := \ & 2 \mathcal{D}_1(\phi_1\phi_2,h_T) - (\phi_1 \phi_2)(0)
\frac{1}{\sum_{j} \frac{h_T(t_j)}{\| u_j \|^2}}
 \sum_{j, \epsilon_j = -1} \frac{h_T(t_j)}{\| u_j \|^2};
\end{eqnarray} note here that the test function is $\phi_1(u)\phi_2(u)$, and the last term on the right-hand-side is the weighted percentage of Maass forms with odd sign in functional equation, which we denote by $\mathcal{N}(-1)$. We have

\begin{lem}
For $h_T = h_{2,T}$, if $\frac{\pi}{2\log T} < (1-\eta)L = o(T^{1/8})$ and $\sigma < \frac{1}{3}\eta$, the weighted 2-level density is
\bea\label{eq:firstexpansion2levelfull} \mathcal{D}_2(\phi,h_T) & = & \frac{1}{\sum_{j} \frac{h_T(t_j)}{\| u_j \|^2}}  \sum_{j} \frac{h_T(t_j)}{\| u_j \|^2} \prod_{i=1}^2\Bigg[\widehat{\phi_i}(0) \frac{\log(1 + t_j^2)}{\log R }   - S_1^{c(i)}(u_j,\phi_i) - S_2^{c(i)}(u_j,\phi_i)\Bigg]^2 \nonumber\\ & & \ \ -\ \mathcal{D}_1(\phi_1\phi_2,h_T) + (\phi_1 \phi_2)(0) \mathcal{N}(-1) + O\left(\frac{\log\log R}{\log R}\right), \eea where $c(1)$ is the identity map and $c(2)$ denotes complex conjugation. Taking $\eta$ to be arbitrarily close to $1$, we get the result for $\sigma < 1/3$.

For $h_T = h_{1,T}$, the result holds for $ \sigma < 1/12$.
\end{lem}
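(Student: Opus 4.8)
The plan is to reduce the two-level density to a double application of the single-form explicit formula, via the standard device of first adjoining the contested diagonal $j_1 = \pm j_2$ and then subtracting off its contribution. Concretely, I would write $\mathcal{D}_2(\phi,h_T) = \mathcal{D}^*_2(\phi,h_T) - \mathcal{D}_{2,\pm}(\phi,h_T)$, where $\mathcal{D}^*_2$ is the quantity in \eqref{eqn: D*} in which the $j_1 = \pm j_2$ pairs have been put back, and $\mathcal{D}_{2,\pm} = 2\mathcal{D}_1(\phi_1\phi_2,h_T) - (\phi_1\phi_2)(0)\mathcal{N}(-1)$ is exactly their contribution: the factor $2$ because the diagonals $j_1 = j_2$ and $j_1 = -j_2$ each contribute $D_1(u_j,\phi_1\phi_2)$ (using that $\phi_i$ is even and that the zeros of $L(s,u_j)$ are symmetric about the central point), and the correction $-(\phi_1\phi_2)(0)\mathcal{N}(-1)$ removing the double count of the forced central zero whenever $\epsilon_j = -1$.

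For $\mathcal{D}^*_2$ I would substitute the single-form expansion from the Proposition (equation \eqref{eq:propexpansionD1}) into each of the two factors, retaining the complex conjugate on the $\phi_2$-factor so that the later Kuznetsov step sees $\lambda_j(m)\overline{\lambda_j(n)}$, and then multiply out. This produces four kinds of terms: the product of the two ``main'' pieces $\bigl[\frac{\phi_i(0)}{2} + \widehat{\phi_i}(0)\frac{\log(1+t_j^2)}{\log R}\bigr]$; the cross pieces pairing one main piece with one prime sum $S_1^{c(i)}$ or $S_2^{c(i)}$; the purely quadratic average $\frac{1}{\sum_j w_j}\sum_j w_j\,(S_1^{(1)}+S_2^{(1)})(S_1^{(2)}+S_2^{(2)})$ with $w_j = h_T(t_j)/\|u_j\|^2$; and error pieces. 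The error pieces are $O(\log\log R/\log R)$: the implied constant is independent of $u_j$, the weights $w_j$ are positive so no oscillation helps, and the remaining factor in each such product is bounded on weighted average — the $\log(1+t_j^2)/\log R$ factors are $\ll 1$ on the support of $h_T$ and the $S_a$-sums are $o(1)$ on average by the one-level analysis.

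For each cross piece I would invoke Lemma \ref{lem:twosumsfor1leveldensity} together with its two-prime analogue, which is nothing more than Lemma \ref{lem: kuznetsovapproximation} applied with $(m,n)$ running over $\{(p_1,1),(p_1^2,1),(p_1,p_2),(p_1^2,p_2),(p_1^2,p_2^2)\}$ and the same prime-sum bookkeeping as in the proof of Lemma \ref{lem:twosumsfor1leveldensity}; since each cross piece is a weighted average of a single $S_a(u_j,\phi_i)$ against a bounded factor, it is $o(1)$. Throughout, Lemma \ref{lem:findingR} lets me replace $\frac{1}{\sum_j w_j}\sum_j w_j\,\widehat{\phi_i}(0)\frac{\log(1+t_j^2)}{\log R}$ by $\widehat{\phi_i}(0)$ wherever a lone $\log(1+t_j^2)/\log R$ factor occurs; for $h_{2,T}$ this is immediate since $|t_j|$ is localized near $T$, while for $h_{1,T}$ one must carry the $\log$-factor through the trace formula as in the proof of Lemma \ref{lem:findingR}, which is the source of the smaller support $\sigma < 1/12$. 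After these replacements the product of the main pieces equals $\prod_{i=1}^2\bigl[\frac{\phi_i(0)}{2}+\widehat{\phi_i}(0)\bigr] + o(1)$, and combining it with the definition of $\mathcal{D}_{2,\pm}$ and with Theorem \ref{thm:levelone} applied to $\mathcal{D}_1(\phi_1\phi_2,h_T)$ collapses the polynomial-in-$\phi_i(0),\widehat{\phi_i}(0)$ remainder to the displayed $-\mathcal{D}_1(\phi_1\phi_2,h_T) + (\phi_1\phi_2)(0)\mathcal{N}(-1)$, leaving only the retained quadratic average and an $O(\log\log R/\log R)$ error. The support $\sigma < \frac13\eta$ falls out automatically: the binding constraint is the terms involving $S_2$, i.e.\ sums of $\lambda_j(p_i^2)$, since by Lemma \ref{lem: kuznetsovapproximation} the off-diagonal error for $(m,n) = (p_1^2,p_2^2)$ carries the largest prime power $(mn)^{1/4} = (p_1 p_2)^{1/2}$, and running the estimate of Lemma \ref{lem:twosumsfor1leveldensity} over $p_1, p_2 \le T^{\sigma}$ shows these are negligible exactly when $\sigma < \frac13\eta$.

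I expect the main obstacle to be organizational rather than conceptual: the delicate Bessel--Kloosterman bound has already been established in Lemma \ref{lem: kuznetsovapproximation}, so the work here is the two-prime bookkeeping — in particular tracking $\gcd(p_1^a,p_2^b,c)$ in the Weil bound in the case $p_1 = p_2$ (which is the case that will eventually produce the $\frac12\int|z|\widehat\phi_1(z)\widehat\phi_2(z)\,dz$ term of Theorem \ref{thm: leveltwo}) as against $p_1 \neq p_2$, and verifying that each of the roughly sixteen terms coming from the product either contributes a clean limit, is absorbed into $\mathcal{D}_1(\phi_1\phi_2,h_T)$, or is negligible for $\sigma$ below the stated thresholds.
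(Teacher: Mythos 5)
Your overall architecture is the paper's: adjoin the diagonal $j_1=\pm j_2$, expand each factor by the one-form explicit formula, multiply out, and feed the resulting coefficient sums to the Kuznetsov machinery. But the entire content of the paper's proof of \emph{this} lemma is the one step you gloss over: moving the $O(\log\log R/\log R)$ error out of the product over $i$. Your justification --- that the terms pairing this error against $S_a(u_j,\phi_i)$ are negligible because ``the $S_a$-sums are $o(1)$ on average by the one-level analysis'' --- does not work. Writing $w_j = h_T(t_j)/\|u_j\|^2$, the one-level analysis controls the \emph{signed} average $\frac{1}{\sum_j w_j}\sum_j w_j\, S_a(u_j,\phi_i)$. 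The error term, call it $E_j$, satisfies only $|E_j|\ll \log\log R/\log R$ with implied constant independent of $j$; its sign and size may vary with $j$ and in particular may correlate with the sign of $S_a(u_j,\phi_i)$. So $\sum_j w_j E_j S_a(u_j,\phi_i)$ is controlled only by $\frac{\log\log R}{\log R}\sum_j w_j |S_a(u_j,\phi_i)|$, and you need the absolute first moment, which the one-level analysis does not provide. The paper supplies it via Cauchy--Schwarz, $\frac{1}{\sum_j w_j}\sum_j w_j|S_a| \le \left(\frac{1}{\sum_j w_j}\sum_j w_j |S_a|^2\right)^{1/2}$, where the second moments are exactly the $S_1\overline{S_1}$ and $S_2\overline{S_2}$ averages computed in Lemmas \ref{lem: S1S1} and \ref{lem: S1S2} (applied with $\phi_1\phi_1$ in place of $\phi_1\phi_2$), giving $O(1)$ and $o(1)$ respectively under the stated support restriction. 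That Cauchy--Schwarz step, with its forward reference to the second-moment lemmas, is the missing idea; without it the cross terms with the error are not bounded.

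A secondary error: you attribute the binding constraint $\sigma<\frac13\eta$ to the $S_2$ terms on the grounds that $(m,n)=(p_1^2,p_2^2)$ carries the largest factor $(mn)^{1/4}=(p_1p_2)^{1/2}$. The opposite is true: those terms also carry $p_1p_2$ in the denominator and their prime sums only run to $T^{\sigma}$ (because of the $2\log p/\log R$ in the argument of $\widehat{\phi}$), so they are negligible for all $\sigma<\eta$. The worst term is $S_1\overline{S_1}$ with $(m,n)=(p_1,p_2)$: the off-diagonal error is of size $(p_1p_2)^{-1/4}$ summed over $p_1,p_2\le T^{2\sigma}$, giving $T^{3\sigma}$ against a saving of $T^{\eta}$, hence $\sigma<\frac13\eta$; this is also exactly the constraint needed to make the second-moment bound $O(1)$ in the Cauchy--Schwarz step above.
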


\begin{proof} Our arguments above proved \eqref{eq:firstexpansion2levelfull}, except for the presence of a $O(\log\log R/\log R)$ inside the product over $i$. We now show that term may be removed from the product at the cost of an error (of the same size) outside all the summations.

As the $O(\log\log R/\log R)$ is independent of $u_j$, these terms are readily bounded by applying the Cauchy-Schwarz inequality. Letting $\mathcal{S}$ represent any of the factors in the product over $i$ in \eqref{eq:firstexpansion2levelfull}, the product involving this is $O(\log\log R/\log R)$:
\begin{eqnarray}& &
\frac{1}{\sum_{j} \frac{h_T(t_j)}{\| u_j \|^2}} \sum_{j} \frac{h_T(t_j)}{\| u_j \|^2} \mathcal{S}\cdot O\left(\frac{\log \log R}{\log R}\right)\nonumber\\ & &  \ll \
\left[\frac{1}{\sum_{j} \frac{h_T(t_j)}{\| u_j \|^2}}\sum_{j} \frac{h_T(t_j)}{\| u_j \|^2} O\left(\left(\frac{\log \log R}{\log R}\right)^2\right)\right]^{1/2}
\cdot
\left[\frac{1}{\sum_{j} \frac{h_T(t_j)}{\| u_j \|^2}}\sum_{j} \frac{h_T(t_j)}{\| u_j \|^2} |\mathcal{S}|^2\right]^{1/2}\nonumber\\
&& \ll \  O\left(\frac{\log \log R}{\log R}\right) \cdot
\left(\frac{1}{\sum_{j} \frac{h_T(t_j)}{\| u_j \|^2}} \sum_{j} \frac{h_T(t_j)}{\| u_j \|^2} |\mathcal{S}|^2\right)^{1/2}. \label{eqn: Cauchy}
\end{eqnarray}

We now analyze the four possibilities for the sum involving $|\mathcal{S}|^2$. If $\mathcal{S}$ is either $\widehat{\phi_i}(0) \frac{\log(1 + t_j^2)}{\log R }$ or $O\left(\frac{\log\log R}{\log R}\right)$, this sum is trivially $O(1)$, and thus the entire expression is $O\left(\frac{\log\log R}{\log R}\right)$. We are left with the non-trivial cases of $\mathcal{S} = S_1 $ or $\mathcal{S} = S_2$. To handle these cases, we rely on results that we will soon prove: for
$h_T = h_{2,T}$, if $3\sigma < \eta$, from Lemma \ref{lem: S1S1} we have $|\mathcal{S}_1|^2 = O(1)$ and from Lemma \ref{lem: S1S2} we have $|S_2|^2 = o(1)$. Note that we use these lemmas for $\phi = \phi_1 \phi_1$ instead of the usual $\phi = \phi_1 \phi_2$, but this does not affect the proofs. Similarly, for
$h_T = h_{1,T}$, these statements hold if $\sigma < 1/12$.
\end{proof}

By symmetry, it suffices to analyze the following terms to determine the 2-level density:\begin{eqnarray}
&& \frac{1}{\sum_{j} \frac{h_T(t_j)}{\| u_j \|^2}} \sum_{j} \frac{h_T(t_j)}{\| u_j \|^2}
\widehat{\phi_1}(0)\widehat{\phi_2}(0) \left(\frac{\log (1 + t_j^2)}{\log R}\right)^2 \nonumber\\
&&\frac{1}{\sum_{j} \frac{h_T(t_j)}{\| u_j \|^2}} \sum_{j} \frac{h_T(t_j)}{\| u_j \|^2}
\widehat{\phi_1}(0) \frac{\log (1 + t_j^2)}{\log R} S_k(u_j,\phi_2), \ \ k \in \{1, 2\}\nonumber\\
&&\frac{1}{\sum_{j} \frac{h_T(t_j)}{\| u_j \|^2}} \sum_j \frac{h_T(t_j)}{\| u_j \|^2}
S_k(u_j,\phi_1)\overline{S_\ell(u_j,\phi_2)} \ \ k, \ell \in \{1, 2\}.
\end{eqnarray}

We now analyze these terms.

\begin{lem} For $h_{T}$ as in \eqref{eq:defht1} or \eqref{eq:defht2}, $R=T^2$ and $\frac{\pi}{2\log T} < (1-\eta)L = o(T^{1/8})$, we have
\be
\frac{1}{\sum_{j} \frac{h_T(t_j)}{\| u_j \|^2}}\sum_{j} \frac{h_T(t_j)}{\| u_j \|^2}
\widehat{\phi_1}(0) \widehat{\phi_2}(0) \left(\frac{\log(1 + t_j^2)}{\log R} \right)^2
= \widehat{\phi_1}(0) \widehat{\phi_2}(0) + O\left(\frac1{\log\log R}\right).
\ee

\end{lem}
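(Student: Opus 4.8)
The plan is to show that the weighted average of $\left(\frac{\log(1+t_j^2)}{\log R}\right)^2$ over the family is $1+o(1)$; pulling the constant $\widehat{\phi_1}(0)\widehat{\phi_2}(0)$ out front then gives the claim. This is the exact analogue for the square of what Lemma \ref{lem:findingR} establishes for the first power of $\frac{\log(1+t_j^2)}{\log R}$, and the mechanism is the same in both weight cases: $h_T$ forces the contributing $t_j$ to have size comparable to $T$, so with $R=T^2$ we have $\log(1+t_j^2)=\log R+(\text{small})$.

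For $h_T=h_{2,T}$ the argument is elementary. As in the proof of Lemma \ref{lem:findingR}, the bounds \eqref{eq:boundsnormuj} on $\|u_j\|^2$, Weyl's law, and the rapid decay of the Schwartz function $h$ show that the $t_j$ with $\bigl||t_j|-T\bigr|>LT^{1/2012}$ contribute a negligible amount (an arbitrarily large negative power of $T$), and the extra factor $(\log(1+t_j^2))^2$ only inserts a harmless power of $\log T$. For the remaining $t_j$ one has $\log(1+t_j^2)=\log R+O(LT^{1/2012-1})=\log R+O(T^{-c})$ for some $c>0$, since $L=o(T^{1/8})$, so $\left(\frac{\log(1+t_j^2)}{\log R}\right)^2=1+O(T^{-c}/\log R)$. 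Factoring this out of the sum and dividing by $\sum_j h_T(t_j)/\|u_j\|^2$ (restoring the full range at negligible cost) gives a weighted average of $1+O(T^{-c})$, far inside the claimed error.

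For $h_T=h_{1,T}$ we apply the Kuznetsov trace formula \eqref{eq:kuznetsovtraceformula} with $(m,n)=(1,1)$ and weight $h_{1,T}(r)\left(\frac{\log(1+r^2)}{\log R}\right)^2$. This is admissible: $\log(1+r^2)=\log(1+ir)+\log(1-ir)$ is holomorphic for $|\mathrm{Im}(r)|<1$, hence in the strip $|\mathrm{Im}(r)|\le 1/2+\delta$ required in the remark after \eqref{eq:kuznetsovtraceformula}, and the Gaussian more than compensates for the slow growth of the logarithm. The $A$ and Bessel--Kloosterman $C$ terms are estimated exactly as for $h_{1,T}$ in the proof of Lemma \ref{lem: kuznetsovapproximation}; the extra factor $(\log(1+r^2))^2$ is slowly varying and only $O((\log T)^2)$ on the effective support of the Gaussian, so it changes those bounds by at most a bounded factor, leaving $A,C=O(T^{7/4+\epsilon})$. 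For the diagonal term $B$ we compute, using $\tanh(r)=1+O(e^{-2r})$, the evenness of the integrand, and the substitution $u=r^2$,
\be
\int_{-\infty}^\infty r\tanh(r)\,h_{1,T}(r)\left(\tfrac{\log(1+r^2)}{\log R}\right)^2 dr \ = \ \frac{1}{(\log R)^2}\int_0^\infty e^{-u/T^2}(\log(1+u))^2\,du + O(1);
\ee
after $u=T^2v$ and $\log(1+T^2v)=\log R+\log v+O(1/(T^2v))$, the integral is $T^2(\log R)^2+O(T^2\log T)$, so the $B$-term contributes $\frac{T^2}{\pi^2}+O(T^2/\log T)$. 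Adding the three terms, $\sum_j\frac{h_{1,T}(t_j)}{\|u_j\|^2}\left(\frac{\log(1+t_j^2)}{\log R}\right)^2=\frac{T^2}{\pi^2}\bigl(1+O(1/\log T)\bigr)$; dividing by $\sum_j\frac{h_{1,T}(t_j)}{\|u_j\|^2}=\frac{T^2}{\pi^2}\bigl(1+O(T^{-1/4})\bigr)$ (Lemma \ref{lem: kuznetsovapproximation} with $m=n=1$) gives a ratio of $1+O(1/\log T)$. Multiplying by $\widehat{\phi_1}(0)\widehat{\phi_2}(0)$ completes the proof; since $1/\log T\asymp 1/\log R$, the true error is $O(1/\log R)$ (and a negative power of $T$ for $h_{2,T}$), comfortably inside the stated $O(1/\log\log R)$. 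The only step requiring any care is the Bessel--Kloosterman estimate for $h_{1,T}$ with the extra logarithmic weight, and since that weight is slowly varying it introduces nothing essentially new beyond Lemma \ref{lem: kuznetsovapproximation}, so I anticipate no genuine obstacle.
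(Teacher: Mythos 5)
Your proof is correct and follows exactly the route the paper intends: the paper's entire proof of this lemma is the single sentence that it is analogous to Lemma \ref{lem:findingR}, and you carry out that analogy faithfully (localization of the $t_j$ near $\pm T$ for $h_{2,T}$, and the Kuznetsov formula with the modified weight plus a direct evaluation of the hyperbolic tangent integral for $h_{1,T}$). Your error terms are in fact sharper than the stated $O(1/\log\log R)$, so the claim follows.
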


\begin{proof} The proof is analogous to the proof of Lemma \ref{lem:findingR}.
\end{proof}

\begin{lem} If $R=T^2$, $\frac{\pi}{2\log T} < (1-\eta)L = o(T^{1/8})$, and $\sigma<\frac{2}{3}\eta$ (for $h_{2,T}$) or $\sigma<\frac{1}{6}$ (for $h_{1,T}$), the $\widehat{\phi_1}(0) S_1(u_j,\phi_2)$ and $\widehat{\phi_1}(0) S_2(u_j,\phi_2)$ terms are $O(\log\log R/\log R)$, and thus do not contribute.
\end{lem}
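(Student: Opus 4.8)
The plan is to reduce the two claimed bounds to the prime-sum estimates already carried out in the proof of Lemma~\ref{lem:twosumsfor1leveldensity}, using the same Kuznetsov-formula input (Lemma~\ref{lem: kuznetsovapproximation}). First I would observe that since $\widehat{\phi_1}(0)$ is merely a fixed constant, the term $\widehat{\phi_1}(0) S_k(u_j,\phi_2)$ contributes, after averaging over the family, exactly
\be
\widehat{\phi_1}(0) \cdot \frac{1}{\sum_{j} \frac{h_T(t_j)}{\|u_j\|^2}} \sum_{j} \frac{h_T(t_j)}{\|u_j\|^2} S_k(u_j,\phi_2),
\ee
and the inner quantity is precisely (for $k=1$ and $k=2$ respectively) the two sums appearing in \eqref{eqn: twosums}, now with test function $\phi_2$ in place of the generic $\phi$. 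Hence these averages were already shown in Lemma~\ref{lem:twosumsfor1leveldensity} to be $o(1)$ — in fact, tracing the error terms there, they are $O(T^{2\sigma(3/4+\epsilon)-\eta}\log T)$ and $O(T^{\sigma(1/2+\epsilon)-\eta})$ for $h_{2,T}$ (respectively $O(T^{2\sigma(3/4+\epsilon)-1/4})$ and $O(T^{\sigma(1/2+\epsilon)-1/4})$ for $h_{1,T}$), which under the stated support hypothesis $\sigma<\tfrac23\eta$ (respectively $\sigma<1/6$) decay like a negative power of $T$.

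Next I would remark that any term that is $O(T^{-c})$ for some fixed $c>0$ is certainly $O(\log\log R/\log R)$, since $\log R = 2\log T$, so $\log\log R/\log R \asymp \log\log T/\log T$ decays only logarithmically while the prime sums decay polynomially. Therefore each of $\widehat{\phi_1}(0)S_1(u_j,\phi_2)$ and $\widehat{\phi_1}(0)S_2(u_j,\phi_2)$, after weighting and normalizing, is absorbed into the $O(\log\log R/\log R)$ error, and in particular does not contribute to the limiting $2$-level density. The one bookkeeping point to verify is that the normalization constant $1/\sum_j h_T(t_j)/\|u_j\|^2$ is exactly the one used in Lemma~\ref{lem:twosumsfor1leveldensity} (it is: $\pi^2/(LT)$ up to lower-order terms for $h_{2,T}$, and $\pi^2/T^2$ for $h_{1,T}$), so the constant $\widehat{\phi_1}(0)$ simply rides along without affecting the order of magnitude.

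There is essentially no obstacle here beyond matching notation; the entire content is that these cross terms are \emph{the same} prime sums already bounded, with a harmless constant factor $\widehat{\phi_1}(0)$ out front and $\phi_2$ playing the role of $\phi$. The mild care needed is only to note that the error-term exponents in Lemma~\ref{lem:twosumsfor1leveldensity} are stated for support in $(-\sigma,\sigma)$ and remain valid verbatim when the single test function there is replaced by $\phi_2$, since only $\supp(\widehat{\phi_2})\subset(-\sigma,\sigma)$ and the Schwartz decay of $\phi_2$ were used. I would close by noting that the same argument, with $S_k$ replaced by the analogous sums $S_\ell(u_j,\phi_1)$, handles the symmetric terms obtained by interchanging the roles of $\phi_1$ and $\phi_2$.
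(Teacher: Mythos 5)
Your reduction captures the right idea---these cross terms are controlled by the same Kuznetsov-formula prime-sum estimates as in Lemma \ref{lem:twosumsfor1leveldensity}---but it misidentifies the term being bounded, and this creates a genuine gap. The quantity the lemma refers to (see the displayed list of terms immediately preceding it) is not $\widehat{\phi_1}(0)$ times the average of $S_k(u_j,\phi_2)$; it is
\[
\frac{1}{\sum_{j} \frac{h_T(t_j)}{\| u_j \|^2}} \sum_{j} \frac{h_T(t_j)}{\| u_j \|^2}\,
\widehat{\phi_1}(0)\, \frac{\log (1 + t_j^2)}{\log R}\, S_k(u_j,\phi_2),
\]
that is, the cross term between $\widehat{\phi_1}(0)\frac{\log(1+t_j^2)}{\log R}$ and $S_k(u_j,\phi_2)$ arising from the product $D_1(u_j,\phi_1)\overline{D_1(u_j,\phi_2)}$. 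The factor $\log(1+t_j^2)/\log R$ depends on $j$ and multiplies $S_k(u_j,\phi_2)$, which has no fixed sign, so it cannot be pulled out of the $j$-sum; your claim that the inner quantity ``is precisely the two sums appearing in \eqref{eqn: twosums}'' is therefore not correct as stated.

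The repair is what the paper in fact does: rerun the argument of Lemma \ref{lem:twosumsfor1leveldensity} with the modified weight function $h_T(t)\log(1+t^2)$ in the Kuznetsov formula (this weight is still even, holomorphic in the required strip, and suitably decaying), and check that the bounds of Lemma \ref{lem: kuznetsovapproximation} degrade by at most a factor of $\log T$---harmless against the polynomial savings $T^{2\sigma(3/4+\epsilon)-\eta}$, resp.\ $T^{2\sigma(3/4+\epsilon)-1/4}$, that you correctly identified. Alternatively, for $h_{2,T}$ you could note that on the effective support $\left| |t_j|-T \right| \le LT^{1/2012}$ one has $\log(1+t_j^2)/\log R = 1 + O(T^{-3/4})$, so the factor really is essentially the constant $1$ there; but for $h_{1,T}$ the factor varies from $O(1/\log T)$ to roughly $1$ across the Gaussian window, and the modified-weight argument is needed. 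The remainder of your write-up (the observation that polynomial decay beats $\log\log R/\log R$, and the normalization bookkeeping) is fine.
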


\begin{proof} The proof is almost identical to the application of the Kuznetsov trace formula to prove similar results for the 1-level density, the only change being that now we have the modified weight function $h_T(t_j) \log (1+t_j^2)$. \end{proof}

\begin{lem}\label{lem: S1S1}
For $ h_T = h_{2,T}$, if $\frac{\pi}{2\log T} < (1-\eta)L = o(T^{1/8})$ and $ \sigma < \frac{1}{3}\eta$,  then
\begin{equation}
\frac{1}{\sum_j\frac{h_T(t_j)}{\|u_j\|^2}} \sum_j\frac{h_T(t_j)}{\|u_j\|^2}S_1(u_j,\phi_1)\overline{S_1(u_j,\phi_2)}
= 2 \int_{-\infty }^\infty  |z|\widehat{\phi}_1(z)\widehat{\phi}_2(z)dz+ O\left(\frac{\log \log R}{\log R}\right).
\end{equation}
Taking $\eta$ to be arbitrarily close to $1$, the statement holds for $\sigma < 1/3$.

For $h_T =  h_{1,T}$, the result holds if $\sigma < 1/12$.
\end{lem}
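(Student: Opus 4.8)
The plan is to multiply out the two prime sums, average over the family with the Kuznetsov formula in the form of Lemma~\ref{lem: kuznetsovapproximation}, isolate the diagonal $p=q$ as the genuine second-order main term, and bound the off-diagonal contribution using the support restriction. Expanding,
\[ S_1(u_j,\phi_1)\overline{S_1(u_j,\phi_2)} \ = \ \sum_{p,q}\frac{4\lambda_j(p)\overline{\lambda_j(q)}\,\log p\,\log q}{(pq)^{1/2}(\log R)^2}\,\widehat{\phi}_1\!\left(\frac{\log p}{\log R}\right)\widehat{\phi}_2\!\left(\frac{\log q}{\log R}\right). \]
Since $\widehat{\phi}_1,\widehat{\phi}_2$ are supported in $(-\sigma,\sigma)$ and $R=T^2$, only $p,q\le T^{2\sigma}$ occur, so the $p,q$-sums are finite and can be pulled outside the $j$-sum. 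Applying Lemma~\ref{lem: kuznetsovapproximation} with $(m,n)=(p,q)$ replaces $\sum_j\frac{h_T(t_j)}{\|u_j\|^2}\lambda_j(p)\overline{\lambda_j(q)}$ by $\frac{\delta(p,q)LT}{\pi^2}$ plus the stated error term, and the same lemma with $m=n=1$ evaluates the normalizing sum as $\sum_j\frac{h_T(t_j)}{\|u_j\|^2}=\frac{LT}{\pi^2}+O\big(Le^{\pi/2L}\log(T^{-1}e^{\pi/L})\big)$. After dividing, the diagonal $p=q$ contributes
\[ \sum_p\frac{4(\log p)^2}{p(\log R)^2}\,\widehat{\phi}_1\!\left(\frac{\log p}{\log R}\right)\widehat{\phi}_2\!\left(\frac{\log p}{\log R}\right) + O\!\left(\frac{\log\log R}{\log R}\right), \]
where the error absorbs both the diagonal part of the Kuznetsov error (which is $\ll_\epsilon T^{\sigma-\eta+\epsilon}$) and the $O(T^{-\eta}\log T)$ relative discrepancy between the normalizing sum and $LT/\pi^2$.

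Next, partial summation together with $\sum_{p\le x}\frac{\log p}{p}=\log x+O(1)$ — exactly the Prime Number Theorem / partial summation argument used earlier to evaluate $\frac{2}{\log R}\sum_p\frac{\log p}{p}\widehat{\phi}\big(\frac{2\log p}{\log R}\big)$ — converts the diagonal prime sum into $4\int_0^\infty z\,\widehat{\phi}_1(z)\widehat{\phi}_2(z)\,dz+O(\log\log R/\log R)$. Because $\phi_1,\phi_2$ are even, $\widehat{\phi}_1\widehat{\phi}_2$ is even, so $4\int_0^\infty z\,\widehat{\phi}_1(z)\widehat{\phi}_2(z)\,dz=2\int_{-\infty}^\infty |z|\,\widehat{\phi}_1(z)\widehat{\phi}_2(z)\,dz$, which is exactly the claimed main term.

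It remains to bound the off-diagonal $p\ne q$. For $h_T=h_{2,T}$, Lemma~\ref{lem: kuznetsovapproximation} bounds this contribution by
\[ \frac{\pi^2}{LT}\cdot\frac{4}{(\log R)^2}\sum_{\substack{p,q\le T^{2\sigma}\\ p\ne q}}\frac{\log p\,\log q}{(pq)^{1/2}}\,L\,e^{\pi/2L}\log\!\big(T^{-1}e^{\pi/L}\big)(pq)^{1/4}\log(3pq). \]
Using $\sum_{p\le X}\frac{\log p}{p^{1/4}}\ll X^{3/4}$ with $X=T^{2\sigma}$, and $e^{\pi/2L}\le T^{1-\eta}$, $\log(T^{-1}e^{\pi/L})\ll\log T$ (the last two from $\frac{\pi}{2\log T}<(1-\eta)L$), the whole expression is $\ll_\epsilon T^{3\sigma-\eta+\epsilon}$, which is power-saving in $T$ — hence negligible — as soon as $\sigma<\frac{1}{3}\eta$. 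For $h_T=h_{1,T}$ one runs the identical estimate with the $T^{7/4}$-bound of Lemma~\ref{lem: kuznetsovapproximation} and family size $\asymp T^2$, obtaining $\ll_\epsilon T^{3\sigma-1/4+\epsilon}$, negligible for $\sigma<1/12$.

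I expect this last step to be the only real obstacle: controlling the off-diagonal sum is precisely where the Bessel--Kloosterman analysis underlying Lemma~\ref{lem: kuznetsovapproximation} is spent, and the constraints $3\sigma<\eta$ (respectively $\sigma<1/12$) arise solely from forcing this off-diagonal error below the genuine second-order main term. The diagonal extraction and the passage to the integral are routine and parallel the 1-level computation carried out in Lemma~\ref{lem:twosumsfor1leveldensity}.
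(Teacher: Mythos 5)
Your proposal is correct and follows essentially the same route as the paper: expand the double prime sum, apply Lemma \ref{lem: kuznetsovapproximation} termwise, extract the diagonal $p=q$ main term via partial summation and the Prime Number Theorem, and show the off-diagonal error is $\ll T^{3\sigma-\eta+\epsilon}$ (resp.\ $T^{3\sigma-1/4+\epsilon}$ for $h_{1,T}$), which is exactly where the constraints $\sigma<\eta/3$ and $\sigma<1/12$ arise. The identification of the off-diagonal bound as the sole source of the support restriction matches the paper's analysis.
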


\begin{proof} We give first give proof for $h_T = h_{2,T}$.
\begin{eqnarray}
&&\sum_j\frac{h_T(t_j)}{\|u_j\|^2}S_1(u_j,\phi_1)\overline{S_1(u_j,\phi_2)}\nonumber\\
&&= \ 4\sum_j\frac{h_T(t_j)}{\|u_j\|^2}\sum_{p_1,p_2}\frac{\lambda_j(p_1)\overline{\lambda_j(p_2)}}{p_1^{1/2}p_2^{1/2}}\frac{\log p_1\log p_2}{\log^2 R}\widehat{\phi}_{1}\left(\frac{\log p_1}{\log R}\right)\widehat{\phi}_{2}\left(\frac{\log p_2}{\log R}\right)\nonumber\\
&&= \ 4\sum_{p_1,p_2}\frac{1}{p_1^{1/2}p_2^{1/2}}\frac{\log p_1\log p_2}{\log^2 R}\widehat{\phi}_{1}\left(\frac{\log p_1}{\log R}\right)\widehat{\phi}_{2}\left(\frac{\log p_2}{\log R}\right)\sum_j\frac{h_T(t_j)}{\|u_j\|^2}\lambda_j(p_1)\overline{\lambda_j(p_2)}. \nonumber\\
\end{eqnarray}
As before, we apply the Kuznetsov formula to the inner sum. Since the formula has a $\delta_{p_1, p_2}$, we need to split this sum into the case when
$p_1= p_2$ and the case when $p_1 \ne p_2$. We first show that the $p_1 \ne p_2$ does not contribute. As $R=T^2$, the prime sums are over distinct primes at most $T^{2\sigma}$. If $p_1 \neq p_2$, then using Lemma \ref{lem: kuznetsovapproximation} for $h_T = h_{2,T}$, we have
\begin{eqnarray}
&&4\sum_{p_1 \ne p_2}\frac{1}{p_1^{1/2}p_2^{1/2}}\frac{\log p_1\log p_2}{\log^2 R}\widehat{\phi}_{1}\left(\frac{\log p_1}{\log R}\right)\widehat{\phi}_{2}\left(\frac{\log p_2}{\log R}\right)\sum_j\frac{h_T(t_j)}{\|u_j\|^2}\lambda_j(p_1)\overline{\lambda_j(p_2)}\nonumber\\
&&= \ 4\sum_{p_1 \ne p_2}\frac{\log p_1\log p_2}{p_1^{1/2}p_2^{1/2}\log^2 R}\widehat{\phi}_{1}\left(\frac{\log p_1}{\log R}\right)\widehat{\phi}_{2}\left(\frac{\log p_2}{\log R}\right)O\left( (\log p_1 p_2)^2 Le^{\pi/2L} \log T \cdot (p_1 p_2)^{1/4}\right) \nonumber\\
&&=\ O\left(Le^{\pi/2L} \log T \sum_{p_1 \ne p_2 < T^{2\sigma}}\frac{1}{p_1^{1/4}p_2^{1/4}}\frac{\log^3 p_1\log^3 p_2}{\log^2 R} \right)\nonumber\\
&&= \ O\left(Le^{\pi/2L} \log T
\left(\sum_{p_1 < T^{2\sigma}}\frac{1}{p_1^{1/4}}\frac{\log^3 p_1}{\log R} \right)
\left(\sum_{p_2 < T^{2\sigma}}\frac{1}{p_2^{1/4}}\frac{\log^3 p_2}{\log R} \right)\right)\nonumber\\
&&= \ O\left(Le^{\pi/2L} \log T \cdot T^{2\sigma(3/4 + \epsilon)}  T^{2\sigma(3/4 + \epsilon)}\right).
\end{eqnarray}
Since we are dividing by $LT + o(1)$ and $\frac{\pi}{2\log T} < (1-\eta)L$ (so the $e^{2/\pi L}$ term is bounded by $T^{1-\eta}$),
this term does not contribute if $2\sigma(3/4) + 2\sigma(3/4) < \eta$. As before, we can take $\eta$ to be arbitrarily less than $1$, and so the term does not contribute if $\sigma < 1/3$.

The case $p_1 = p_2$ does contribute. Using Lemma \ref{lem: kuznetsovapproximation}, we get that
\begin{eqnarray}
&&4\sum_{p_1 = p_2}\frac{1}{p_1^{1/2}p_2^{1/2}}\frac{\log p_1\log p_2}{\log^2 R}\widehat{\phi}_{1}\left(\frac{\log p_1}{\log R}\right)\widehat{\phi}_{2}\left(\frac{\log p_2}{\log R}\right)\sum_j\frac{h_T(t_j)}{\|u_j\|^2}\lambda_j(p_1)\overline{\lambda_j(p_2)}\nonumber\\
&&=4\sum_{p}\frac{1}{p}\frac{\log^2 p}{\log^2 R} \widehat{\phi}_{1}\left(\frac{\log p}{\log R}\right)\widehat{\phi}_{2}\left(\frac{\log p}{\log R}\right)
\left(\frac{LT}{\pi^2} + O\left(Le^{\pi/2L}\log T \cdot p^{1/2+\epsilon}\right)\right). \nonumber\\
&&=4\sum_{p}\frac{1}{p}\frac{\log^2 p}{\log^2 R} \widehat{\phi}_{1}\left(\frac{\log p}{\log R}\right)\widehat{\phi}_{2}\left(\frac{\log p}{\log R}\right)
\frac{LT}{\pi^2} + O\left(Le^{\pi/ 2L} T^{(1/2+\epsilon)(2\sigma)} \right). \nonumber\\
\end{eqnarray}
As $\frac{\pi}{2\log T} < (1-\eta)L$, $e^{\pi/2L} \ll T^{1-\eta}$, the error term does not contribute if
$1/2(2\sigma) < \eta$. Taking $\eta$ to be arbitrarily less than $1$, we see that the error term does not contribute if $\sigma < 1$.

When we divide by the sum of the weights (which is $LT/\pi^2 + o(1)$), we are left with a prime sum.
A standard computation (using partial summation and the Prime Number Theorem, see \cite{Mil0} for a proof) yields
\begin{eqnarray}
4\sum_{p}\frac{1}{p}\frac{\log^2 p}{\log^2 R}
\widehat{\phi}_{1}\left(\frac{\log p}{\log R}\right)
\widehat{\phi}_{2}\left(\frac{\log p}{\log R}\right)
\ = \  2 \int_{-\infty }^\infty  |z|\widehat{\phi}_1(z)\widehat{\phi}_2(z)dz+ O\left(\frac{\log \log R}{\log R}\right).\nonumber\\
\end{eqnarray}
Dividing by the weights, we get the desired result when $h_T = h_{2,T}$.

The case when $h_T = h_{1,T}$ is done in exactly the same way except we now use the first part of Lemma \ref{lem: kuznetsovapproximation} for $h_T = h_{1,T}$.
\end{proof}

\begin{lem}\label{lem: S1S2} If $R=T^2$ and $\frac{\pi}{2\log T} < (1-\eta)L = o(T^{1/8})$, the contribution from the $S_k(u_j,\phi_1)\overline{S_\ell(u_j,\phi_2)}$ terms are $O(\log\log R/\log R)$ if $(k,\ell) = (2,2)$ (in which case we may take $\sigma<\eta$ for $h_T = h_{2,T}$ and $\sigma < 1/4$ for $h_T = h_{1,T}$) or $(k,\ell) = (1,2)$ (in which case we may take $\sigma<\eta /2$ for $h_T = h_{2,T}$ and $\sigma< 1/8$ for
$h_T = h_{1,T}$).
\end{lem}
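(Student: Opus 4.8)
The plan is to follow the template of the proof of Lemma~\ref{lem: S1S1}, adapted to the higher prime powers appearing in $S_2$. Writing uniformly $S_k(u_j,\phi_i)=\sum_p \frac{2\lambda_j(p^k)\log p}{p^{k/2}\log R}\,\widehat{\phi}_i\!\big(\tfrac{k\log p}{\log R}\big)$ and taking $R=T^2$, the compact support of $\widehat{\phi}_i$ confines the prime sum in an $S_k$-factor to $p\le T^{2\sigma/k}$. Hence for each pair $(k,\ell)$ we must bound
\[
\frac{1}{\sum_j \frac{h_T(t_j)}{\|u_j\|^2}}\sum_{p_1\le T^{2\sigma/k}}\sum_{p_2\le T^{2\sigma/\ell}} \frac{4\log p_1\log p_2}{p_1^{k/2}p_2^{\ell/2}\log^2 R}\,\widehat{\phi}_1\!\Big(\tfrac{k\log p_1}{\log R}\Big)\widehat{\phi}_2\!\Big(\tfrac{\ell\log p_2}{\log R}\Big)\sum_j \frac{h_T(t_j)}{\|u_j\|^2}\lambda_j(p_1^k)\overline{\lambda_j(p_2^\ell)},
\]
where the interchange of summation is legitimate since only finitely many primes occur. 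First I would apply Lemma~\ref{lem: kuznetsovapproximation} to the inner $j$-sum with $(m,n)=(p_1^k,p_2^\ell)$.

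The structural point that makes these cases easier than Lemma~\ref{lem: S1S1} is that the diagonal $\delta(p_1^k,p_2^\ell)$ vanishes identically when $k\neq\ell$, since a prime is never a prime squared; thus for $(k,\ell)=(1,2)$ (and, by the evident symmetry, for $(2,1)$) there is no main term at all, only the Kuznetsov error. For $(k,\ell)=(2,2)$ the diagonal survives only on $p_1=p_2$, and upon dividing by $\sum_j h_T(t_j)/\|u_j\|^2$ (which is $\asymp LT/\pi^2$ for $h_{2,T}$ and $\asymp T^2/\pi^2$ for $h_{1,T}$, by Lemma~\ref{lem: kuznetsovapproximation} at $m=n=1$) it contributes precisely $4\sum_p \frac{(\log p)^2}{p^2\log^2 R}\widehat{\phi}_1\!\big(\tfrac{2\log p}{\log R}\big)\widehat{\phi}_2\!\big(\tfrac{2\log p}{\log R}\big)=O(1/\log^2 R)$, since $\sum_p (\log p)^2/p^2$ converges; this is comfortably inside the claimed $O(\log\log R/\log R)$.

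It then remains to control the off-diagonal contribution (and, for $(2,2)$, the error left over from the diagonal). For $h_{2,T}$ the error term of Lemma~\ref{lem: kuznetsovapproximation} is $O\!\big(Le^{\pi/2L}\log(T^{-1}e^{\pi/L})\,(p_1^kp_2^\ell)^{1/4}\log(3p_1^kp_2^\ell)\big)$, holding uniformly over $\gcd(p_1^k,p_2^\ell)\in\{1,p,p^2\}$ (all of which were checked there to give the same bound); the hypothesis $\frac{\pi}{2\log T}<(1-\eta)L$ gives $e^{\pi/2L}\le T^{1-\eta}$ and $|\log(T^{-1}e^{\pi/L})|\ll\log T$. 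Multiplying by the weight $\frac{\log p_1\log p_2}{p_1^{k/2}p_2^{\ell/2}\log^2 R}$, bounding $\log(3p_1^kp_2^\ell)\ll\log p_1+\log p_2$, and splitting the resulting double sum into a product of single sums of the shape $\sum_{p\le X}p^{-k/4}(\log p)^{O(1)}\ll X^{1-k/4}(\log X)^{O(1)}$, after dividing by $\asymp LT$ one arrives at a bound $\ll T^{\epsilon-\eta}\cdot T^{(2\sigma/k)(1-k/4)+(2\sigma/\ell)(1-\ell/4)}$. For $(k,\ell)=(2,2)$ the exponent of $T$ is $\sigma-\eta+\epsilon$, and for $(k,\ell)=(1,2)$ (or $(2,1)$) it is $2\sigma-\eta+\epsilon$, which are $o(1)$ exactly when $\sigma<\eta$, respectively $\sigma<\eta/2$. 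For $h_{1,T}$ one instead uses the error $O\!\big((\log 3p_1^kp_2^\ell)^2T^{7/4}(p_1^kp_2^\ell)^{1/4}\big)$ and normalization $\asymp T^2$; the identical computation replaces $T^{-\eta}$ by $T^{7/4-2}=T^{-1/4}$, giving exponents $\sigma-1/4+\epsilon$ and $2\sigma-1/4+\epsilon$, hence $o(1)$ when $\sigma<1/4$, respectively $\sigma<1/8$. The diagonal error term for $(2,2)$, where now $m=n=p^2$ so that $(mn)^{1/4}=p$, is subsumed by this same estimate (it is even smaller, carrying no $T^\sigma$ factor).

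I expect the last paragraph to be the only real obstacle: because the normalization here is the comparatively small $LT$ (rather than something of size $T^2$), the $(p_1^kp_2^\ell)^{1/4}$ growth in the Kuznetsov error cannot be handled wastefully — one must retain the exact exponents $p_i^{-k/2}$ in the weights, the exact cutoffs $T^{2\sigma/k}$, and the saving $e^{\pi/2L}\le T^{1-\eta}$. Once these are tracked, the rest is routine bookkeeping of exactly the kind already carried out in Lemmas~\ref{lem: kuznetsovapproximation}, \ref{lem:twosumsfor1leveldensity} and \ref{lem: S1S1}, together with the standard prime-sum estimate (partial summation plus the Prime Number Theorem; cf. \cite{Mil0}) used to handle the convergent diagonal sum in the $(2,2)$ case.
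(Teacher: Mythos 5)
Your proposal is correct and follows exactly the route the paper intends (its own "proof" is a one-line deferral to the argument of Lemma \ref{lem: S1S1}): expand the double prime sum, apply Lemma \ref{lem: kuznetsovapproximation} with $(m,n)=(p_1^k,p_2^\ell)$, observe the diagonal vanishes for $k\ne\ell$ and contributes only a convergent $O(1/\log^2 R)$ sum for $(2,2)$, and bound the off-diagonal via the Kuznetsov error. Your exponent bookkeeping ($\sigma-\eta$ and $2\sigma-\eta$ for $h_{2,T}$, with $-\eta$ replaced by $-1/4$ for $h_{1,T}$) reproduces precisely the stated support ranges.
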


\begin{proof} The proof is similar to the previous lemma, following again by applications of the Kuznetsov trace formula. The support is slightly larger as the power of the primes in the denominator are larger.
\end{proof}

In the above lemmas, the worst restriction on the support is that $\sigma<\frac{1}{3}\eta$ (for $h_{2,T}$) and $\sigma<1/12$ (for $h_{1,T}$), coming from the $S_1(u_j,\phi_1)\overline{S_1(u_j,\phi_2)}$ term; it is typical in problems like this for these arguments to yield the 2-level with a support one-half that of the 1-level density.

\begin{proof}[Proof of Theorem \ref{thm: leveltwo}]
The proof follows immediately from substituting the above lemmas in the 2-level density expansion of Lemma \ref{eq:firstexpansion2levelfull}.
\end{proof}

\appendix


\section{Bounding errors in Bessel-Kloosterman terms}\label{sec:boundingerrorsBK}

We complete the proof of Lemma \ref{lem: kuznetsovapproximation} by showing how to handle the error terms. There are two sources of error, arising from truncating the original integral and from approximating the resulting expressions. The second is readily handled. Instead of approximating the terms in \eqref{eq:expansionsqrtu} and \eqref{eq:expansionuu}, we instead Taylor expand, and use the identity that the Fourier transform of $u^n h(u)$ at $\omega$ is proportional to $\widehat{h}^{(n)}(\omega)$; as $\widehat{h}$ is compactly supported, so too is its $n$\textsuperscript{th} derivative, and we again can use our compact support to restrict $c$. A standard error analysis shows that the errors from these series expansions are subsumed in the other errors, as \emph{now} we can exploit the additional $T$-decay. It is important that we only take finitely many terms in the Taylor expansions, as otherwise the big-Oh constants could depend on infinitely many Taylor coefficients. Fortunately it suffices to consider only finitely many terms, as a finite but large number can gain us any desired power of $T$, as $L/T^{1/8} = o(T^{-1/8})$ and $h$ is Schwartz (and so decays faster than any polynomial in the input).

We need a little more care in truncating the integral in \eqref{eq:ILTmnstart}. In \eqref{eq:ILTmnstart} we truncated the integral at $\pm \sqrt[4]{T}$. If $c < T^{\delta}$ for some \emph{fixed} $\delta > 0$, then the excised integration is negligible as the rapid decay of $h$ gives us any desired power savings of $c$ in the denominator, due to the fact that we are evaluating at more than $T^{1/4}/L \gg T^{1/8}$ units from $T$. It is essential that $\delta$ is fixed; if $c = e^T$, for example, then while the point of evaluation is far away on the scale of $T$, we would only have an arbitrary logarithmic savings with respect to $c$.

We are left with the contribution from large $c$. We may take $\delta = 2013$, so $c$ large means $c > T^{2013}$. Instead of bounding the error from truncating the integral in \eqref{eq:ILTmnstart}, we instead bound the integral in terms of $c$ and $T$. Using the standard property that $\Gamma(1-z)\Gamma(1+z) = \pi z /\sin(\pi z)$, and keeping only the first term in the expansion for $J_{2ir}$ from \eqref{eq:besselexpansionfromAS} (as before, the higher order terms give significantly less contributions which are subsumed in the zeroth order term), yields \bea\label{eq:errorintIc} I_{c, {\rm main}}(T,L;m,n) & \ = \ & \intii \left(\frac{4\pi\sqrt{mn}}{2c}\right)^{2ir} \frac{\sin(2\pi i r) \Gamma(1-2ir)}{2\pi i r} \frac{r h((r-T)/L)dr}{\cosh(\pi r)} \nonumber\\ &=& \frac1{\pi} \intii \left(\frac{2\pi\sqrt{mn}}{c}\right)^{2ir} \sinh(\pi r) \Gamma(1-2ir) h((r-T)/L)dr,\nonumber\\ \eea as \be \frac{\sin(2\pi i r)}{\cosh(\pi r)} \ = \ \frac{(e^{-2\pi r}-e^{2\pi r})/2i}{(e^{\pi r}+e^{-\pi r})/2} \ = \ i \frac{(e^{\pi r} - e^{-\pi r})(e^{\pi r}+e^{-\pi r})}{e^{\pi r}+e^{-\pi r}} \ = \ 2i \sinh(\pi r). \ee We now shift the contour in \eqref{eq:errorintIc}, moving $r$ to $r-\frac{k}{2}i$. If we take $0 < k < 1$, in particular $k = 3/4$, then we do not pass through any poles. Further, the factor $ \left(\frac{2\pi\sqrt{mn}}{c}\right)^{2ir}$ becomes $\left(\frac{2\pi\sqrt{mn}}{c}\right)^{2ir}  \left(\frac{2\pi\sqrt{mn}}{c}\right)^{3/4}$, which gives us plenty of additional $c$-decay. So long as the integrand is bounded, we will win, as $c \ge T^{2013}$. Shifting the contour, we find \bea I_{c, {\rm main}}(T,L;m,n) & \ = \ & \frac1{\pi} \left(\frac{2\pi\sqrt{mn}}{c}\right)^{3/4} \intii \left(\frac{2\pi\sqrt{mn}}{c}\right)^{2ir} \nonumber\\ & & \ \ \ \ \ \ \cdot \sinh\left(\pi r - \frac38\pi i\right) \Gamma\left(\frac14-2ir\right) h\left(\frac{r-T - \frac38i}{L}\right).\ \ \ \ \ \eea Straightforward algebra shows \be \sinh\left(\pi r - \frac38\pi i\right) \ = \ - i \sin\left(\frac{3\pi}8\right) \cdot \cosh(\pi r) + \cos\left(\frac{3\pi}8\right) \cdot \sinh(\pi r) \ee (and $\sinh(\pi r)$ and $\cosh(\pi r)$ both grow like $e^{\pi |r|}$ as $|r| \to \infty$). Also, by Fourier Inversion, the fact that the support of $\widehat{h}$ is contained in $(-1,1)$, and integrating by parts twice we find \bea h\left(\frac{r-T - \frac38i}{L}\right) & \ = \ & \int_{-1}^1 \widehat{h}(y) e^{2 \pi i \left( \frac{r-T}{L} - \frac{3i}{8L}\right) y} dy \nonumber\\  & \ = \ & \left( \frac{r-T}{L} - \frac{3i}{8L}\right)^{-2} \int_{-1}^1 \widehat{h}''(y) e^{2 \pi i \left( \frac{r-T}{L} - \frac{3i}{8L}\right) y} dy \nonumber\\ &=& \left( \frac{r-T}{L} - \frac{3i}{8L}\right)^{-2} \int_{-1}^1 \widehat{h}''(y) e^{2 \pi i \left( \frac{r-T}{L}\right)y} e^{3\pi y/4L} dy \nonumber\\ & \ll_h & \frac{L^2}{(r-T)^2 + 9/64}  e^{3\pi/4L} \nonumber\\ & \ll &  \ \frac{L^2T^{\frac32(1-\eta)}}{(r-T)^2+9/64}, \eea where the last inequality follows from the assumption that $\frac{\pi}{2\log T} < (1-\eta)L$. We substitute our bounds into $I_{c, {\rm main}}(L,T;m,n)$, and note that standard properties of the Gamma and hyperbolic functions give $\left|\cosh(\pi r) \Gamma(1/4 - 2 i r)\right| \ll r^{-1/4}$. To see this, by Stirling's formula ($\Gamma(z) = z^{z-1/2} e^{-z} \sqrt{2\pi}(1 + O(1/z))$) and the series expansion for arctangent ($\arctan \phi = \phi + O(\phi)$ for $\phi$ small) we have for $r > 0$ that \bea \Gamma(1/4 - 2ir) & \ = \ & (1/4 - 2ir)^{1/4-2ir-1/2} e^{-(1/4-2ir)} \sqrt{2\pi} \left(1 + O(1/r)\right) \nonumber\\ & = & \left( \sqrt{4r^2+1/16}\ e^{-i(\pi/2 - \arctan(1/8r))} \right)^{-1/4 + 2ir} e^{-(1/4-2ir)} \sqrt{2\pi} \left(1 + O(1/r)\right) \nonumber\\ & \ll & r^{-1/4} \left(e^{-i(\pi/2 - 1/8r + O(1/r^3))}\right)^{-2ir} \nonumber\\ & \ll & r^{-1/4} e^{-\pi r -1/4 + O(1/r^2)} \ \ll \ r^{-1/4} e^{-\pi r}; \eea as $\sinh(\pi r), \cosh(\pi r) \ll e^{\pi r}$, our claimed bound follows. The proof for $r < 0$ follows similarly. We find \bea I_{c, {\rm main}}(L,T;m,n) & \ \ll \ & \frac{(mn)^{3/8}}{c^{3/4}} \intii \frac{L^2T^{\frac32(1-\eta)}}{(r-T)^2+9/64} dr \nonumber\\ & \ \ll \ &  \frac{(mn)^{3/8} L^2 T^{\frac32(1-\eta)}}{c^{3/4}}. \eea Remembering that we only need this estimate for $c \ge T^{2013}$, we see the contribution these Bessel integrals in the sums weighted by Kloosterman factors are negligible, as we will have a $c^{5/4-\epsilon}$ in the denominator. \hfill $\Box$


\ \\

\end{document}